\newtheorem{theorem}{Theorem}
\newtheorem{assumption}{Assumption}
\newtheorem{definition}{Definition}
\newtheorem{example}{Example}
\newtheorem{lemma}{Lemma}
\newtheorem{proposition}{Proposition}
\newcommand{\calP}{{\cal P}}
\newcommand{\calM}{{\cal M}}
\newcommand{\ep}{\varepsilon}
\newcommand{\dN}{{{\bf N}}}
\newcommand{\dR}{{{\bf R}}}
\newcommand{\E}{{{\bf E}}}
\newcommand{\prob}{{{\bf P}}}
\newcommand{\calN}{\mathcal{N}}
\newcommand{\calS}{\mathcal{S}}
\newcommand{\calU}{\mathcal{U}}
\newenvironment{proof}[1][Proof]{\textbf{#1.} }{\ \rule{0.5em}{0.5em}}
\renewcommand{\baselinestretch}{1.20}
\newcounter{figurecounter}
\newcommand{\myendexample}{$\blacklozenge$}
\newcommand{\numbercellong}[1]
{
\begin{picture}(40,20)(0,0)
\put(0,0){\framebox(40,20)} \put(20,10){\makebox(0,0){#1}}
\end{picture}
}
\begin{document}

\title{Dynamic Sender-Receiver Games%
\thanks{The research of Solan and Renault and Vieille
were supported by the Israel Science Foundation (grant number 212/09) and the Agence  Nationale de la Recherche (grant ANR-10-BLAN 0112).}}

\author{J\'er\^ome Renault\thanks{TSE (GREMAQ, Universit\' e Toulouse 1),
  21 all\' ee de Brienne, 31000 Toulouse, France. E-mail: \textsf{jerome.renault@tse-fr.eu}.},
Eilon Solan\thanks{School of Mathematical Sciences, Tel Aviv University, Tel
Aviv 69978, Israel. E-mail: \textsf{eilons@post.tau.ac.il}.},
and Nicolas Vieille\thanks{Departement Economics and Decision Sciences, HEC Paris, 1, rue de
la Lib\'{e}ration, 78 351 Jouy-en-Josas, France. E-mail: \textsf{vieille@hec.fr}.}}

\date{\today}

\maketitle

\begin{abstract}
We consider a dynamic version of sender-receiver games, where the
sequence of states follows an irreducible Markov chain observed by the sender.
Under mild assumptions, we provide a simple characterization of  the limit set of equilibrium
payoffs, as players become very patient. 
Under these assumptions, the limit set  depends on the Markov chain only through its invariant measure. 
The (limit) equilibrium payoffs are the feasible payoffs that satisfy an individual rationality condition for the receiver, and an incentive compatibility condition for the sender.
\end{abstract}

\section{Introduction}

Since Crawford and Sobel (1982), sender-receiver games, or
cheap-talk games, have become a natural framework for studying
issues of information transmission between a privately informed
`expert' and an uninformed decision maker, where the two parties
have non-aligned interests.

When the decision maker acts only \emph{once}, the   extent to which
information can be shared at equilibrium has been studied
extensively, when `talk' takes place prior to the decision stage.
While Crawford and Sobel (1982), see also Green and Stokey (2007),
have focused on the case where communication is limited to a single
costless and non-verifiable message from the sender to the receiver,
more recent papers have shown that this restriction is not
innocuous, and have characterized the equilibrium outcomes for
general cheap-talk games, see Krishna and Morgan (2001), Aumann and
Hart (2003).\footnote{The case of verifiable messages has also been
studied in detail, see Forges and Koessler (2008).} This work has
been motivated by numerous concrete situations. We refer to Krishna
and Morgan (2008), Farrell and Rabin (1996), and Sobel (2009) for a
discussion of these applications.


The present work is motivated by the following observation. Whether
the sender is a financial advisor who provides advice to a client,
an expert who is consulted on a project, or a referee on a
project/person, the situation often calls for a dynamic approach.
Indeed, the financial advisor provides advice on a series of
investments, and the expert and the referee may be consulted on
successive, related projects.

Golosov, Skreta, Tsyvinsky and Wilson (2009) consider such a
situation. They assume that the sender repeatedly sends messages,
the receiver repeatedly makes decisions, while the state of the
world remains \emph{fixed} throughout. Within the Crawford and Sobel
framework (continuum of states/messages), they show that, for some 
specifications on the initial distribution on states,
(necessarily complex) equilibria exist, that achieve full
revelation of the state of the world in finite time.

We here deal with situations in which the state of the world may
change through time. Specifically, we assume that the successive
states form an irreducible Markov chain over some finite set. In
every stage, the sender issues a message/recommendation, and the
receiver makes a decision. States are only known to the sender, and
payoffs only depend on the current state and on the receiver's decision, but not on the message sent by the sender.

Since states are autocorrelated, any information disclosed in stage
$n$ provides valuable information in later stages as well, as in
Golosov et al. (2009). Yet, since the Markov Chain is irreducible,
this information becomes eventually valueless.

Intuitively,  the inter-temporal situation puts some restrictions on
the players' behavior. As an illustration, the opinion of an expert
who systematically provides laudative reports will eventually come
to be discounted, if not ignored, since the decision maker is aware
of the fact that the time-average report of the quality of
people/projects should reflect the invariant measure of the states of the world.
On the other hand, an expert who genuinely provides accurate
information to promote efficiency, but sees that the decision maker
only acts in his interests, may become wary and may stop to provide
valuable information to the decision maker. As is well-known from
repeated games, the sender may indeed provide  powerful incentives
by conditioning his future communication policy on the behavior of
the decision maker. Similar insights already appear in the literature on dynamic contracting, 
see Baron and Besanko (1984), Besanko (1985) or  Battaglini (2005).

Our paper relates to the recent and growing literature on incomplete
information games, in which the uncertainty evolves, see, e.g.,
Athey and Bagwell (2008),  Mailath and Samuelson (2001), Phelan (2006), Renault (2006), Wiseman (2008), and H\"orner, Rosenberg, Solan and Vieille (2010) and, especially, Escobar and Toikka (2010).

We provide a characterization of the limit
set of sequential equilibrium payoffs, when players are very
 patient.

Our main findings are the following. 
We first show (Theorem 1) that a feasible payoff
vector is a (limit) equilibrium payoff as soon as the following two conditions are met. On
the one hand, the payoff of the receiver should be at least his babbling equilibrium payoff. This
condition is an individual rationality condition. Indeed, the latter payoff is equal to the receiver  minmax payoff in the dynamic game since the receiver has the option
to ignore the announcements of the sender. On the other hand, the sender's payoff should satisfy 
an incentive compatibility condition, which reflects the fact that the sender has the option of 
substituting artificially generated states to the true ones when playing, as long as the artificial states are 
statistically undistinguishable from the true ones. As it turns out, this incentive constraint takes the
form of finitely many linear inequalities.

In the corresponding equilibria,  with high probability the sender truthfully reports
the current state  most of the time , while the receiver responds in a
stationary manner to the announcements of the sender, and checks
that the distribution of these announcements is consistent with the
invariant measure.\footnote{While this is reminiscent of the revelation principle, we must stress that no revelation 
principe applies in our setting.}

We next show (Theorem 2) that the converse inclusion holds under some additional condition on the Markov chain, which we call \textbf{Assumption A}: any limit equilibrium payoff must satisfy the individual rational condition and the specific version of the incentive compatibility requirement of Theorem 1. 

A noteworthy consequence is that, under \textbf{Assumption A},  the limit set of
equilibrium payoffs 
does not depend on \emph{how} successive states are correlated, nor
on fine details of the sequence of states, but only on the invariant
measure. It is also irrelevant whether the sender learns some, or even all, of the 
realization of the future states in advance. In particular, the set of equilibrium payoffs can be
computed \emph{as if} successive states were
independent. 


Our results are valid  for  a large (open) class of payoff functions for the static game, but not for all of them. More precisely, we prove that for generic payoff functions (and under {\bf Assumption A}),  either our results hold, or all equilibria of the repeated game are payoff-equivalent to   babbling equilibria.



The paper is organized as follows. The model is described in
Section \ref{section model}. In Section \ref{example2.1} we explain most insights by means of an example. The main results appear in Section \ref{section results}, together with an illustration. Proofs are discussed in Section 
\ref{section proof} and the Appendix. Additional results and comments are provided in Section \ref{sec_further}.
The Appendix contains all proof details.

\section{Model}
\label{section model}

We study dynamic sender-receiver games, in which the state of the
world changes through time. At each stage $n\geq 1$, the sender (player 1)
 observes the current state of the world
$s_n\in S$, and makes an announcement $a_n\in A$.
Upon observing $a_n$, the receiver (player 2) chooses an
action $b_n\in B$. The current action $b_n$,
together with the current state $s_n$, determines the
utility vector $u(s_n,b_n) \in
\dR^2$ at stage $n$. 
 Only
the action $b_n$ is then publicly disclosed. We thus maintain the assumption that payoffs are not observed.%
 The two players share a common discount factor
$\delta$.


We assume throughout that the set of states $S$, the set of
messages $A$, and the set of actions $B$, are finite. We also
assume that  there are at least as many messages as states. This assumption ensures that the only motives for concealing the state are
strategic. We thus leave aside  situations in which, due to capacity
constraints, the sender might be forced to choose which feature of
the state to reveal.
For simplicity, we will actually assume throughout that the set $A$ of messages coincides with the set $S$ of states. (As will be seen, this assumption is without loss of generality in our setup.)

We assume that the states $(s_n)$ follow a Markov chain over
$S$, with transition function $p(\cdot\mid\cdot)$,
which is irreducible and aperiodic.%
\footnote{That is, for any two states $s,t\in S$, and for every $N
\in \dN$ large enough, the probability of moving from $s$ to $t$ in
exactly $N$ stages is positive.} The Markov chain therefore admits a
unique invariant measure, $m\in  \Delta(S)$. For convenience, we
assume that the first state, $s_1$, is drawn according to
$m$. This ensures that the law of $s_n$ is equal to $m$,
for every $n \geq 1$. 

\bigskip
In this setup, a strategy  of the sender maps past and current realized states, and past play, into a mixed message,
and is thus a map  $\sigma:\cup_{n\geq 0}
(S\times A\times B)^n\times S\to \Delta(A)$,
while a strategy of the receiver is a map $\tau:\cup_{n\geq 0}(A\times B)^n\to \Delta(B)$.
A \emph{stationary} strategy of the receiver is a map $y : A\to \Delta(B)$, with the interpretation that the receiver chooses his action according to
$y(\cdot \mid a)\in \Delta(B)$ whenever told $a\in A$.

\bigskip
Our goal is to study to what extent  the dynamic structure of the
game affects the equilibrium outcomes. Formally, we aim at providing
a characterization of the limit set of sequential equilibrium
payoffs,
and at understanding equilibrium behavior, when players are very patient.%

\section{An Example}
\label{example2.1}

We here illustrate our main results by means of a simple example. There
are two states, $S = \{L,R\}$, and two actions for the receiver, $l$
and $r$. Successive states are independent and equally likely. Payoffs
are given by the two tables in Figure \arabic{figurecounter},
where $c$ is a fixed parameter, with $c\in (1,2)$. The sender and the receiver are respectively players 1 and 2. 

\centerline{
\begin{picture}(90,60)(-10,-20)
\put( 20,27){$l$}
\put( 60,27){$r$}
\put( 0,0){\numbercellong{$c,2$}}
\put(40,0){\numbercellong{$2,1$}}
\put(20,-15){State $L$}
\end{picture}
\ \ \ \ \ \ \ \ \ \ \ \ \ \ \ \ \ \ \ \
\begin{picture}(90,60)(-10,-20)
\put( 20,27){$l$}
\put( 60,27){$r$}
\put( 0,0){\numbercellong{$1,-1$}}
\put(40,0){\numbercellong{$2,1$}}
\put(20,-15){State $R$}
\end{picture}
} \centerline{Figure \arabic{figurecounter}: The payoffs of the
two players.}

\addtocounter{figurecounter}{1}

The one-shot information transmission game has a unique equilibrium, in which the receiver plays $r$ with probability 1.
To see this, note that the sender strictly prefers action $r$ over action $l$, no matter what the state is. Thus, at equilibrium, all messages that are sent with positive probability induce the same mixed action by the Receiver. This constant mixed action, being always \textit{ex post} optimal for the Receiver, is therefore also \textit{ex ante} optimal. It must thus assign probability one to action $r$.
\bigskip

All equilibria in the one-shot game are therefore babbling
equilibria.\footnote{In the sense that the action of the receiver
is independent of the message sent by the sender.} Plainly, the
dynamic game admits a babbling equilibrium, in which the sender repeatedly makes the same announcement,  the receiver
treats the announcements as being non-informative, and
plays $r$ in every stage. On the other hand, the receiver can always choose to ignore
the announcements of the sender, and to play $r$ in every stage,
thereby getting 1. As a result, the babbling equilibrium is the
\emph{worst} equilibrium for the receiver, in both the one-shot
and in the dynamic game.

\bigskip

We claim that the dynamic game has equilibrium payoffs that are arbitrarily close to $(\frac{2+c}{2},\frac32)$.
In particular, and in contrast with the receiver,  there are  equilibrium payoffs for the sender that are below the babbling equilibrium payoff. 
Here is the intuition. The sender announces the true state at every stage.
The receiver listens to the announcements of the sender, and plays $l$ when told $L$, and $r$ when told $R$.
 To prevent  the sender from announcing $R$ in every stage, the receiver monitors the announcements of the sender, and stops listening if there is an obvious bias (towards either $L$ or $R$).
Under the constraint that he should
announce both states equally often, the expected payoff of the sender is highest when he reports truthfully.

While this intuition is simple, formalizing it into an equilibrium of the discounted game is not straightforward.
Indeed, because payoffs are discounted, the sender may have a preference to send at first the message $R$ more frequently.

\bigskip

We start with a simple construction that yields an equilibrium payoff distinct from $(2,1)$.
Assume that the discount factor  satisfies $\delta > \displaystyle \frac{4-2c}{3-c}$, and consider the following strategy profile.
\begin{itemize}
\item   At odd stages, the sender announces truthfully the current
stage, and the receiver plays $l$ if told $L$, and $r$ if told
$R$. \item   At even stages, the sender announces a constant
message, and the receiver plays the action that he did \emph{not}
play in the previous stage. \item   If the receiver deviates, both
players switch to the babbling equilibrium forever.
\end{itemize}
Under this strategy profile,
 expected payoffs are equal to $\displaystyle \frac{1}{1+\delta}\left( \frac{2+c}{2}+\delta\frac{5+c}{4}\right)$ and
$\displaystyle \frac{1}{1+\delta}\left( \frac{3}{2}+ \frac{3}{4}\delta\right)$ respectively.
Because a deviation of the receiver is followed by the babbling equilibrium,
which yields 1 to the receiver, and because the (conditional) expected payoff of the receiver is at least 1 in every stage,
no deviation of the receiver is profitable.
Regarding the sender, it is sufficient to show that he cannot profit by deviating in any block of two stages.
In such a block the sender has two possible deviations:
to announce $L$ in the first stage of the block when the true state is $R$,
and to announce $R$  in the first stage of the block when the true state is $L$.
In the former case, he gets 1 at the first stage and 2 in the second
(instead of 2 at the first stage and $\frac{1+c}{2}$ at the second stage if he announces truthfully).
In the latter case, he gets 2 at the first stage and $\frac{1+c}{2}$ at the second stage
(instead of $c$ at the first stage and $2$ at the second stage if he announces truthfully).
The choice of $\delta$ ensures that none of these deviations is profitable.

\bigskip

To get payoffs closer to $(\frac{2+c}{2},\frac32)$, we will be
relying on a slightly more complex construction. We let the size
$2N$ of a block be large enough, so that a law of large numbers
will apply. Once $N$ is fixed, we let the discount factor
$\delta$ be high enough, so that the contribution of  any
individual block to the overall discounted payoff is very small.

We first describe a pure strategy $\tau$ of the receiver.
In each block (unless if the receiver has deviated earlier),
the receiver listens to the sender's announcements, plays $l$ if told $L$, $r$ if told $R$,
until the number of announcements of either $L$ or $R$ exceeds $N$.
When this is the case, the receiver stops listening to the sender's announcements,
and repeats the least frequent action until the end of the current block.%
\footnote{An alternative construction, that we adopt in the
general case, is for the receiver to generate a specific sequence
of fictitious announcements, and continue as if the sender's
announcements were equal to the fictitious ones.} In a sense,  the sender is restricted to announcing both states equally often in any given block of $2N$ stages. As such, the intuition here is similar to some extent to the one behind the \textit{linking mechanism} of Jackson and Sonnenschein (2007) and, even more, to the analysis in Escobar and Toikka (2010).\footnote{The present analysis  and the one in Escobar and Toikka (2010) were developed independently.}

If indeed the
sender reports truthfully the current state, there is a high
probability that the receiver will be listening  to the sender most of the time, and the expected payoff is
therefore close to $(\frac{2+c}{2},\frac32)$.

In contrast with the situation examined above,
it need not be optimal for the sender to report truthfully when facing $\tau$.
However, a crucial insight is that \emph{any} best reply of the sender to $\tau$ must be reporting truthfully most of the time, with high probability.
To see why, observe that any best reply achieves a payoff of at least,
say, $\frac{2+c}{2}-\ep$.
But since the receiver plays both actions $l$ and $r$ equally likely on each block,
this implies that with high probability the action of the receiver matches the state,
most of the time.

We let $\sigma$ be any pure best-reply of the sender to $\tau$.
On the equilibrium path, we let players play according to $\sigma$ and $\tau$.
By construction, the equilibrium property holds for the sender.
To deter the receiver from deviating,
both players switch forever to the babbling equilibrium once a deviation of the receiver is detected.
Since blocks are short, the expected continuation payoff of the receiver is close to $\frac32$ following any history,
while the receiver gets a payoff of 1 (or close to 1) if he deviates.

\section{Main Results}

\label{section results}

We here state and discuss two results on the limit set of equilibrium payoffs. Loosely speaking, according to Theorem \ref{theorem1}, 
all payoff vectors that are  individually rational for the receiver and incentive compatible for the receiver are (asymptotically) equilibrium payoffs. 
Theorem \ref{theorem2} proves the converse inclusion.  Further results are provided in Section \ref{sec_further}.

\subsection{Theorem \ref{theorem1}}

We start with some notations.
We denote by $\calM \subset
\Delta(S\times A)$ the set of {\em copulas} based on $m$; that is,
the set of distributions $\mu$ over $S\times A$
 whose marginals on $S$ and on $A$ are both equal to $m$.\footnote{Recall that the set $A$ of messages is a copy of $S$.}
 The set ${\cal M}$ is defined by a finite number of linear inequalities, hence it is a compact convex polyhedron,
 so it has finitely many extreme points.

We denote by $\mu_0\in \calM$ the specific distribution defined as
$\mu_0(s,s)=m(s)$ for each $s\in S$, and $\mu_0(s,a)=0$ if $s\neq
a$. Under $\mu_0$, the messages and the  states coincide
\emph{a.s}. Thus, the distribution  $\mu_0$ is the long-run
average distribution of the sequence
$(s_n,a_n)_n$ when the sender reports
truthfully the current state.

Given a copula $\mu\in\calM$, and a stationary strategy $y:A\to \Delta(B)$, we set
\[U(\mu,y):=\sum_{s\in S,a\in A}\mu(s,a)u(s,y(\cdot\mid a))\in \dR^2.\]
This is the expected payoff vector when the sender's report is drawn according to  $\mu(\cdot\mid s)$,
and the receiver plays $y$.

We denote by \begin{equation}
\label{equ minmax}
v^2 := \max_{b \in B} \sum_{s \in S} m(s) u^2(s,b)
\end{equation}
 the babbling equilibrium payoff for the receiver.
\begin{definition}\label{defE}
We let $E(\calM)$ denote the set of payoff vectors $U(\mu_0,y)$, where $y:A\to \Delta(B)$, that satisfy
\begin{description}
\item[C1.] $U^1(\mu_0,y)\geq U^1(\mu,y)$ for every $\mu\in \calM$.
\item[C2.] $U^2(\mu_0,y)\geq v^2$, 
\end{description}
\end{definition}

We define $\widehat{E}(\calM)$ as the set of payoff vectors
$U(\mu_0,y)\in E(\calM)$ where the inequalities in \textbf{C1} and
\textbf{C2} are strict. That is, $\widehat{E}(\calM)$ is the set
of vectors $U(\mu_0,y)$, $y:A\to \Delta(B))$, such that
\begin{description}
\item[D1.] $U^1(\mu_0,y)> U^1(\mu,y)$ for every $\mu\in \calM$,
$\mu\neq \mu_0$. \item[D2.] $U^2(\mu_0,y)> v^2$.
\end{description}

Note that condition \textbf{D1} holds as soon as the inequality
$U^1(\mu_0,y)> U^1(\mu,y)$ is satisfied for each of the finitely
many extreme points $\mu\neq \mu_0 $ of $\calM$.

We denote by  $SE_\delta$ the set of sequential equilibrium payoffs of the game with discount factor $\delta$.
Our first main result, Theorem \ref{theorem1}, shows that all payoffs in $E(\calM)$ can be obtained as equilibrium payoffs, provided that players are sufficiently patient. 

\begin{theorem}\label{theorem1}
Suppose that there exists a public randomizing device, which
outputs a (uniformly distributed) number in $[0,1]$ in every
stage, after the announcement of the sender. If
$\widehat{E}(\calM)\neq \emptyset$ then
\[E(\calM)\subseteq \lim\inf_{\delta\to 1} SE_\delta.\]
\end{theorem}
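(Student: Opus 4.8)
The plan is to follow the block-construction strategy sketched in the Example, now carried out in full generality. Fix a target payoff $w = U(\mu_0,y^*)\in E(\calM)$. Since $\widehat E(\calM)\neq\emptyset$, pick some $\bar w = U(\mu_0,\bar y)\in\widehat E(\calM)$, and for $\eta>0$ small consider the mixture $y_\eta := (1-\eta)y^* + \eta \bar y$; the associated payoff $U(\mu_0,y_\eta)$ is within $O(\eta)$ of $w$ and satisfies the \emph{strict} versions \textbf{D1} and \textbf{D2}, because \textbf{C1}/\textbf{C2} hold for $y^*$ and \textbf{D1}/\textbf{D2} hold for $\bar y$, and strictness is preserved under convex combination with a strict point (for \textbf{D1} one uses that there are only finitely many extreme points $\mu\neq\mu_0$ to check). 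So it suffices to realize, for every $\varepsilon>0$ and every $\delta$ close enough to $1$, a sequential equilibrium payoff within $\varepsilon$ of $U(\mu_0,y_\eta)$ for a fixed small $\eta$; letting $\eta\to 0$ afterwards gives the $\liminf$ inclusion of all of $E(\calM)$.

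Next I would build the strategy profile on blocks of length $2N$ (or just $N$), with $N$ chosen large and then $\delta$ chosen close to $1$ so that each block contributes negligibly to the discounted sum. Within a block the receiver commits to a stationary response: he generates a fictitious i.i.d.\ sequence of states with law $m$, and plays $y_\eta(\cdot\mid a)$ against the \emph{fictitious} announcement, unless the empirical frequency of the sender's \emph{actual} announcements over the block is too far (in total variation, beyond a threshold $\rho$) from $m$, in which case he reverts to playing a babbling best response. Off the block structure, any deviation by the receiver that is publicly detectable triggers permanent reversion to the babbling equilibrium, which gives the receiver exactly $v^2$; since the receiver's per-stage continuation payoff along the path is at least $v^2$ (this is \textbf{C2}, i.e.\ \textbf{D2} for $y_\eta$) and blocks are short relative to $1/(1-\delta)$, no receiver deviation is profitable. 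The sender's strategy is defined as \emph{any} pure best reply to the receiver's strategy; the equilibrium condition for the sender is then automatic. The public randomizing device is used to convexify / coordinate the block phases and to handle the usual indivisibility issues in making continuation payoffs exactly right.

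The main work — and the step I expect to be the crux — is the payoff estimate: showing that the sender's best reply yields, with high probability, an empirical joint distribution of (state, announcement) over each block close to $\mu_0$, so that the realized payoff is close to $U(\mu_0,y_\eta)$. The argument is the one indicated in the Example: by \textbf{D1}, $\mu_0$ is the \emph{unique} maximizer over the polytope $\calM$ of the linear functional $\mu\mapsto U^1(\mu,y_\eta)$, hence by compactness there is, for every neighborhood of $\mu_0$, a strictly positive payoff gap $\kappa$ for copulas outside it. A sender's best reply must secure at least (roughly) $U^1(\mu_0,y_\eta)-o(1)$; but conditionally on not being ``flagged,'' the receiver plays $y_\eta$ against a sequence statistically indistinguishable from the truth, so the sender's realized payoff is determined (up to $O(\rho)+o(1)$ fluctuations controlled by a law of large numbers for the Markov chain) by the block's empirical copula; securing nearly the maximum therefore forces that empirical copula near $\mu_0$ with high probability, and in particular forces the announcement frequencies near $m$ so that flagging is rare. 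One then assembles these block-wise estimates, using that $\delta\to 1$ washes out the first finitely many (atypical) blocks and the vanishing flagging probability, to conclude that both players' discounted payoffs are within $\varepsilon$ of $U(\mu_0,y_\eta)$, and that no one-shot deviation pays. The delicate points are: quantifying the LLN over the irreducible aperiodic chain with explicit tail bounds uniform in the sender's (history-dependent) strategy; choosing the thresholds $\rho$, block length $N$, and then $\delta$ in the correct order; and verifying sequential (not merely Nash) rationality at off-path information sets, where the randomizing device and the short-block structure keep all continuation payoffs pinned near the target.
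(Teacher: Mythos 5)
Your proposal follows essentially the same route as the paper's proof: perturbing the target strategy toward a point of $\widehat{E}(\calM)$ to obtain strict versions of \textbf{C1}--\textbf{C2}, a block construction in which the receiver responds stationarily to (possibly fictitious) announcements subject to a frequency/quota check and is held down by reversion to babbling, the sender defined as an arbitrary pure best reply, and the crux being that any near-maximizer of the linear functional $\mu\mapsto U^1(\mu,y)$ over the copula polytope must lie near $\mu_0$ (your compactness-gap argument is the qualitative form of the paper's quantitative Lemma \ref{lemm2}). The only differences are cosmetic --- the paper triggers the switch to fictitious announcements at the first quota violation rather than via a total-variation threshold, which keeps the announcement marginal exactly at $m_N$ on every block --- so your outline matches the paper's argument.
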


Theorem \ref{theorem1} means that for every $\gamma\in E(\calM)$ and for every $\ep>0$, there exists $\delta_0<1$ such that, for every $\delta \geq \delta_0$,
 the $\delta$-discounted game has a sequential equibrium payoff  within $\ep$ of $\gamma$. 
\footnote{We will actually prove the stronger statement that $\delta_0$ can be chosen to be independent of $\gamma$:
 \[\lim_{\delta\to 1} \sup_{\gamma\in E(\calM)}d(\gamma,SE_\delta)= 0.\]} The proof of Theorem \ref{theorem1} is provided in Section \ref{sec_prooftheorem1}.
\bigskip

Few comments are in order. 

The babbling payoff $v^2$ is  equal to the $\min\max$ value of the receiver in
the dynamic game. Hence condition \textbf{C2} in Definition \ref{defE} reads as an  individual rationality condition.
Condition \textbf{C1} is akin to an incentive compatibility
condition: under the constraint that the distribution of messages is equal to the distribution of states, 
truth-telling is optimal for the sender. According to Theorem \ref{theorem1}, any payoff vector $U(\mu_0,y)$ that is incentive compatible for the sender, and 
individually rational for the receiver, is an equilibrium payoff for $\delta $ large. 

Our construction will have the
somewhat surprising feature that the sender reports truthfully, at
least most of the time and with high probability. A direct
intuition can be provided, that is reminiscent of the revelation
principle in mechanism design. Let an equilibrium $(\sigma,\tau)$
be given. Consider the strategy profile where the sender reports
truthfully, and the receiver first computes the message that the
strategy $\sigma$ would have sent, and next plays what $\tau$
would have played given this message. We argue loosely that this
new profile (when supplemented with threats) is an equilibrium.
The key to the argument is twofold. On the one hand, the sender
can check that the receiver does indeed play as prescribed, and
does not use the additional information provided by the knowledge
of the true state. On the other hand, the threat of switching to
the babbling equilibrium is effective because the knowledge of the
state at a given stage becomes eventually valueless in predicting
distant stages, because of the irreducibility property of the
sequence of states. However, we should stress that no revelation principle applies in our setup, 
and our equilibrium construction relies on the  threat that the sender will stop 
providing information following a deviation. 

\bigskip

Theorem \ref{theorem1} relies on two assumptions. 
The public randomizing device can easily be dispensed with, provided one slightly extends the  communication options offered to  the players. To be specific, assume that the players are allowed to exchange  simultaneous 'messages', after the sender has reported a state. Under such an assumption, players can implement \textit{jointly controlled lotteries} as in Aumann and Maschler (1995), which can substitute for the randomization device. Details are standard and omitted.  However, when instead communication is restricted to a single message sent by the sender, then the existence of a public randomizing device is not without loss of generality, see Section \ref{sec_device} for an example.

\bigskip
Theorem \ref{theorem1} also requires  $\widehat{E}(\calM)$ to be non-empty. This is similar to the non-empty interior type of conditions 
which appear in Folk Theorems. Yet, we must stress that our assumption is somewhat stronger, since $\widehat{E}(\calM)$ need not be equal to the relative interior of $E(\calM)$, and the condition that $\widehat{E}(\calM)\neq \emptyset$ is not generically satisfied. We provide a robust example where $\widehat{E}(\calM)= \emptyset$ and elaborate further on this issue in Section \ref{sec_generic}. 

\bigskip

It is not an easy task  to rely on Definition \ref{defE} to check whether  a given payoff vector $U(\mu,y)$ belongs to
$E(\calM)$.  Fortunately, it
turns out that conditions \textbf{C1} and \textbf{D1} are equivalent to much
simpler conditions.

\begin{lemma}  \label{lemmbasic}
Let $y:A\to \Delta(B)$ be given. Conditions \textbf{C1} and \textbf{D1} are respectively equivalent to conditions \textbf{C'1} and \textbf{D'1} below.
\begin{description}
\item[C'1] $\displaystyle\sum_{s\in S}u^1(s,y(\cdot\mid s))\geq \sum_{s\in S}u^1(s,y(\cdot\mid\phi(s)))$, for every permutation $\phi$ over $S$.
\item[D'1] $\displaystyle\sum_{s\in S}u^1(s,y(\cdot\mid s))> \sum_{s\in S}u^1(s,y(\cdot\mid\phi(s)))$,
for every permutation $\phi$ over $S$ that is not the identity mapping.
\end{description}
\end{lemma}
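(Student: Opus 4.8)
The plan is to restate \textbf{C1}/\textbf{D1} and \textbf{C'1}/\textbf{D'1} as optimality properties of one and the same linear functional, and then to pass between the two pictures by decomposing everything into cycles. Write $f(s,a):=u^1(s,y(\cdot\mid a))$ and, for $\mu\in\Delta(S\times A)$, abbreviate $\langle\mu,f\rangle:=\sum_{s\in S,a\in A}\mu(s,a)f(s,a)$, so that $U^1(\mu,y)=\langle\mu,f\rangle$. Then \textbf{C1} says exactly that $\mu_0$ maximizes $\langle\cdot,f\rangle$ over the transportation polytope $\calM$, and \textbf{D1} that $\mu_0$ is its \emph{unique} maximizer; likewise \textbf{C'1} (resp.\ \textbf{D'1}) says that the identity permutation maximizes (resp.\ uniquely maximizes) $\phi\mapsto\sum_{s\in S}f(s,\phi(s))$ over permutations of $S$. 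Irreducibility enters only through $m(s)>0$ for every $s$, i.e.\ $\mu_0$ is supported on the whole diagonal.

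The bridge is a family of ``cycle matrices''. For a simple directed cycle $\gamma=(s_1\to\cdots\to s_k\to s_1)$ of length $k\ge2$ on distinct states, let $\widetilde C_\gamma$ be the $S\times S$ matrix with $+1$ at each $(s_i,s_{i+1})$ (indices mod $k$), $-1$ at each $(s_i,s_i)$, and $0$ elsewhere. Three observations: $\widetilde C_\gamma$ has all row- and column-sums equal to $0$; for all small $\varepsilon>0$ the matrix $\mu_0+\varepsilon\widetilde C_\gamma$ lies in $\calM$ and differs from $\mu_0$; and $\langle\widetilde C_\gamma,f\rangle=\sum_i f(s_i,s_{i+1})-\sum_i f(s_i,s_i)=\sum_s f(s,\phi_\gamma(s))-\sum_s f(s,s)$, where $\phi_\gamma$ is the permutation acting as the $k$-cycle $(s_1\cdots s_k)$ and fixing all other states. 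Conversely, given any $\mu\in\calM$, the difference $\nu:=\mu-\mu_0$ has zero row- and column-sums, is $\le0$ on the diagonal (since $\mu(s,s)\le m(s)=\mu_0(s,s)$) and $\ge0$ off it; hence its off-diagonal part is a nonnegative circulation, and the flow-decomposition lemma yields simple cycles $\gamma_j$ of length $\ge2$ and weights $\lambda_j>0$ with $\nu=\sum_j\lambda_j\widetilde C_{\gamma_j}$ --- the diagonal entries matching because each simple cycle through a state $s$ visits $s$ once, so the total cycle-weight through $s$ equals the circulation's out-flow $m(s)-\mu(s,s)=-\nu(s,s)$. Decomposing an arbitrary permutation $\phi$ into its nontrivial cycles gives, in the same way, $\sum_s f(s,\phi(s))-\sum_s f(s,s)=\sum_{\gamma}\langle\widetilde C_\gamma,f\rangle$.

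With these facts the four implications are short. For \textbf{C1}$\Leftrightarrow$\textbf{C'1}: a failure of \textbf{C'1} produces $\phi$ with $\sum_s f(s,\phi(s))>\sum_s f(s,s)$, hence a nontrivial cycle $\gamma$ with $\langle\widetilde C_\gamma,f\rangle>0$, so $\mu_0+\varepsilon\widetilde C_\gamma\in\calM$ beats $\mu_0$; conversely a failure of \textbf{C1} produces $\mu\in\calM$ with $\langle\mu-\mu_0,f\rangle>0$, hence (from the decomposition) a nontrivial cycle $\gamma$ with $\langle\widetilde C_\gamma,f\rangle>0$, so $\phi_\gamma\neq\mathrm{id}$ beats the identity. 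For \textbf{D1}$\Leftrightarrow$\textbf{D'1}: if \textbf{C1} fails there is nothing to prove (then \textbf{C'1}, \textbf{D1}, \textbf{D'1} all fail); otherwise \textbf{C1} holds, so $\langle\widetilde C_\gamma,f\rangle\le0$ for every cycle $\gamma$, and a failure of either \textbf{D1} or \textbf{D'1} gives, via the decomposition or the cycle identity together with the fact that nonpositive numbers summing to $0$ vanish individually, a nontrivial cycle $\gamma$ with $\langle\widetilde C_\gamma,f\rangle=0$; this cycle simultaneously exhibits $\mu_0+\varepsilon\widetilde C_\gamma$ as a second maximizer on $\calM$ and $\phi_\gamma\neq\mathrm{id}$ as attaining the identity's value, so both \textbf{D1} and \textbf{D'1} fail.

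The only genuinely delicate point is the identity $\nu=\mu-\mu_0=\sum_j\lambda_j\widetilde C_{\gamma_j}$: off the diagonal it is just the flow-decomposition of a nonnegative circulation, but one must check that the $-1$'s that each $\widetilde C_{\gamma_j}$ places on its vertices' diagonal entries reassemble to $\nu$ on the diagonal as well. This is exactly the bookkeeping recorded above (out-flow at $s$ equals $m(s)-\mu(s,s)$ equals the total weight of cycles through $s$), and it is where positivity of $m$ is used: it is $\mu_0(s,s)=m(s)>0$ that makes the diagonal available as the ``complementary'' part of each cycle matrix. Everything else is routine linear algebra.
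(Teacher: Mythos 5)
Your proof is correct, and the cycle-decomposition route you take is a genuine (if closely related) alternative to the paper's argument. The paper's proof rests on a Birkhoff--von Neumann-type description of the copula polytope, namely $\calM=\left(\mu_0-I+\mbox{co}\,\Phi\right)\cap\dR_+^{S\times S}$ with $\Phi$ the set of permutation matrices: the direction \textbf{C'1}$\Rightarrow$\textbf{C1} then follows by writing $\mu=\mu_0-I+\sum_\phi\alpha_\phi P^\phi$ and using linearity of $U^1$, while \textbf{C1}$\Rightarrow$\textbf{C'1} perturbs $\mu_0$ toward a whole permutation matrix, $\mu_\ep=\mu_0-\ep I+\ep\mu^\phi$, exactly as you perturb toward a single cycle matrix; both perturbations use $m(s)>0$ in the same way. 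Your decomposition $\mu-\mu_0=\sum_j\lambda_j\widetilde C_{\gamma_j}$ is a refinement of the paper's: each $P^\phi-I$ is itself a sum of your cycle matrices over the nontrivial cycles of $\phi$, so you are decomposing one step further, into the extreme rays of the cone of circulations rather than into vertices of the Birkhoff polytope. What this buys you is a cleaner treatment of the strict versions: the paper only writes out the \textbf{C1}$\Leftrightarrow$\textbf{C'1} equivalence and leaves \textbf{D1}$\Leftrightarrow$\textbf{D'1} implicit, whereas your observation that, once \textbf{C1} holds, every cycle contributes a nonpositive amount --- so that any tie forces some individual cycle to contribute exactly zero, which then witnesses the failure of \emph{both} \textbf{D1} and \textbf{D'1} simultaneously --- disposes of the uniqueness statement with no extra machinery. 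The bookkeeping you flag as the delicate point (that the diagonal $-1$'s of the cycle matrices reassemble to $\mu(s,s)-m(s)$ because each simple cycle visits $s$ at most once and the total cycle weight through $s$ is the out-flow $m(s)-\mu(s,s)$) is right, and is the precise analogue of the paper's check that $J=\mu+I-\mu_0$ is bistochastic. Both arguments use positivity of the invariant measure only to guarantee that the diagonal of $\mu_0$ can absorb a small perturbation.
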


The proof of Lemma \ref{lemmbasic} is in the Appendix. 
Interestingly, conditions \textbf{C'1} and \textbf{D'1} do
\emph{not} involve the invariant distribution $m$. The intuition is best explained in the case of two states, $s_0$ and $s_1$. Assume that the sender is 
considering mis-reporting the state, under the constraint that the distribution of reports matches the invariant distribution $m$ of the state. 
The only way to do this is to report $s_1$ instead of $s_0$, \emph{as often} as to report $s_0$ instead of $s_1$. Whether such a deviation is profitable 
is equivalent to asking how the \emph{unweighted} sum of the payoffs obtained in  $s_0$ when reporting $s_1$ and in $s_1$ when reporting $s_0$ compares
to the \emph{unweighted} sum of the payoffs obtained in the two states when reporting truthfully.

\subsection{Theorem \ref{theorem2}}

Our second main result provides the converse inclusion to that in Theorem \ref{theorem1}. 
It  requires
one substantive assumption on the behavior of the state, \textbf{Assumption A} below.

\begin{assumption}[Assumption A]
 There exist nonnegative numbers $\alpha_s$, $s\in S$,
with $\displaystyle \sum_{s\in S\setminus \{\bar s\}}\alpha_s\leq 1$ (for every $\bar s\in S$),
such that $p(s'\mid s)=\alpha_{s'}$ whenever $s'\neq s$.
\end{assumption}

\textbf{Assumption A} is restrictive. Yet it does \textit{e.g.} hold in the following cases. 

Assume first that changes
in the state are due to  shocks, which occur at random
times. Once drawn, the state remains constant until a
 shock occurs. The state is then drawn
anew, according to $m$. The inter-arrival times of the successive
shocks are i.i.d., and follow a geometric distribution. In that
case, \textbf{Assumption A} is met. Indeed, it suffices to set
$\alpha_t=\pi\times m(t)$ for every $t \in S$, where $\pi$ is the
per-stage probability of a shock. The parameter $\pi$ is here a
measure of the state persistence. When $\pi$ increases from 0 to 1,
the situation evolves from one in which the state remains constant
through time, to a situation in which successive states are
independent.

When $\pi=1$, the successive states are independent, and identically
distributed according to $m$. Thus, \textbf{Assumption A} holds in
the case of i.i.d. states.

\textbf{Assumption A} also holds in the benchmark case
where there are only two possible states.
 Indeed, denoting the two
states by $s_1$ and $s_2$, it suffices to set $\alpha_1 = p(s_1\mid
s_2)$ and $\alpha_2 = p(s_2\mid s_1)$. In particular, it is satisfied in the models in Athey and Bagwell (2008), Phelan (2006) and Wiseman (2008). 

As a further simple illustration, consider a symmetric random walk on three states. That is, whenever in a state, the chain moves to each of the two other states with probability $\frac12$. Again, \textbf{Assumption A} is met, with $\alpha_s=\frac12$ for each $s\in S$.

\bigskip

We denote by $NE_\delta$ the set of (Nash) equilibrium payoffs in the game with discount factor $\delta$.

\begin{theorem}\label{theorem2}
Suppose that \textbf{Assumption A} holds. Then, for every $\delta
<1$, one has
\[NE_\delta\subseteq E(\calM).\]
\end{theorem}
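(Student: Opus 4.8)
The plan is to fix a Nash equilibrium $(\sigma,\tau)$ of the $\delta$-discounted game with payoff $\gamma=(\gamma^1,\gamma^2)$ and to exhibit a stationary receiver strategy $y:A\to\Delta(B)$ with $\gamma=U(\mu_0,y)$ satisfying \textbf{C1} and \textbf{C2}. The individual rationality inequality \textbf{C2}, $\gamma^2\geq v^2$, is the easy half: since the receiver can ignore the sender's messages entirely and play the static $\min\max$ action $b$ achieving $v^2$ in every stage, he guarantees $v^2$ regardless of $\sigma$; hence any equilibrium payoff for the receiver is at least $v^2$. The substance is in producing $y$ and verifying \textbf{C1}.

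To build $y$, I would look at the long-run empirical distribution of play. Because the chain is irreducible and aperiodic, for $\delta$ close to $1$ the discounted occupation measure over $(s_n,a_n,b_n)$ is close to a genuine invariant/Cesàro average; its marginal on $S$ is $m$. Let $\nu$ be (a limit point of) this occupation measure, with marginal $m$ on $S$. Define $y(\cdot\mid a)$ to be the conditional distribution of $b$ given $a=a_n$ under $\nu$ (on messages of positive $\nu$-probability; extend arbitrarily otherwise). The payoff generated is then $\sum_{s,a}\nu(s,a)u(s,y(\cdot\mid a))$. The key point — and here is where \textbf{Assumption A} enters — is that one can replace $\nu$ by $\mu_0$ at no cost to the payoff formula: under Assumption A the sender, knowing the true state, can replace the actual state sequence by a "fictitious" sequence generated from the same transition kernel, conditioned only on the current state, in a way that is statistically indistinguishable from the true chain to the receiver (who sees only $(a_n,b_n)$). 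Concretely, the linear structure $p(s'\mid s)=\alpha_{s'}$ for $s'\neq s$ makes the set of sender-controllable joint distributions on consecutive-state pairs rich enough that any copula $\mu\in\calM$ can be "simulated" as the report distribution without the receiver detecting a deviation; hence $U(\mu,y)$ is achievable by a sender deviation for every $\mu\in\calM$. Since $(\sigma,\tau)$ is a Nash equilibrium and the receiver's continuation behaviour can only be conditioned on the public history of $(a_n,b_n)$, the sender cannot gain: $\gamma^1=U(\mu_0,y)\geq U^1(\mu,y)$ for all $\mu\in\calM$, which is exactly \textbf{C1}. Likewise truth-telling is optimal against $y$, so in fact $\gamma=U(\mu_0,y)$.

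In carrying this out, the order would be: (i) set up the discounted occupation measures and pass to a limit point as $\delta\to1$, recording that the $S$-marginal is $m$; (ii) extract the stationary receiver response $y$ from this limit and check the payoff identity; (iii) show, using Assumption A, that every copula $\mu\in\calM$ is implementable by the sender as an undetectable relabelling of states, so that the equilibrium inequality for the sender yields \textbf{C1}; (iv) note \textbf{C2} as above; (v) finally argue $\gamma=U(\mu_0,y)$, i.e. that along the equilibrium path the effective report distribution is $\mu_0$ (or, if not, replace $y$ by the version that makes $\mu_0$ optimal — any equilibrium payoff still lies in $E(\calM)$). The main obstacle I expect is step (iii): making precise the claim that a sender deviation can realize an arbitrary copula $\mu$ while keeping the receiver's observed process $(a_n,b_n)$ distributed exactly as on the equilibrium path. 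This requires Assumption A in an essential way — the additive form of $p(\cdot\mid\cdot)$ off the diagonal is what lets the sender "resample" the state freely — and one must handle the feedback from the receiver's (history-dependent) strategy $\tau$, showing that conditioning on the public history does not give the receiver any leverage to punish such a deviation, since the deviation is statistically invisible. A secondary technical point is controlling discounting effects (the "front-loading" temptation noted in the example) when passing from finite-$\delta$ inequalities to the asymptotic copula inequality \textbf{C1}; this is handled by the $\delta\to1$ limit together with the irreducibility-driven mixing of the chain.
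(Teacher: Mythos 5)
Your high-level strategy for \textbf{C1} is the right one --- the sender deviates by feeding a ``fictitious'' state sequence, statistically indistinguishable from the true one, into his equilibrium strategy, and \textbf{Assumption A} is what makes an arbitrary copula $\mu\in\calM$ implementable this way --- and your argument for \textbf{C2} matches the paper's. But there are three concrete problems with the execution.

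First, your construction of $y$ creates a gap the paper avoids. You condition the receiver's action on the \emph{message} $a_n$ under an occupation measure $\nu$ on $(s,a,b)$, and then must argue that $\nu$'s marginal on $(s,a)$ can be ``replaced by $\mu_0$ at no cost.'' That is false in general: the equilibrium $\sigma$ need not be truthful, so the joint state--message distribution need not be $\mu_0$, and your closing parenthetical (``or, if not, replace $y$ by the version that makes $\mu_0$ optimal'') does not repair it. The paper instead defines $y(b\mid s)$ as the expected \emph{discounted} frequency of action $b$ conditional on the true \emph{state} being $s$ (identifying $A$ with $S$). With that definition the identity $\gamma_\delta(\sigma,\tau)=U(\mu_0,y)$ holds by mere rearrangement of the discounted sum, with no truthfulness assumption, and the deviation payoff comes out exactly as $U(\mu,y)$ for the same $y$.

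Second, the passage to the limit $\delta\to 1$ is both unnecessary and a weakening: the theorem asserts $NE_\delta\subseteq E(\calM)$ for \emph{every fixed} $\delta<1$, and the paper's argument works at fixed $\delta$ precisely because $y$ is defined via discounted frequencies. Your ``secondary technical point'' about controlling discounting effects dissolves once $y$ is defined this way.

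Third, and most importantly, the step you label ``the main obstacle I expect'' is where essentially all of the work lies, and you do not carry it out. One must construct a process $(\mathbf{t}_n)$ satisfying four properties simultaneously: it is computable online from past and current states (feasibility of the deviation), its law equals that of $(\mathbf{s}_n)$ (undetectability against \emph{any} history-dependent $\tau$), the per-stage joint law of $(\mathbf{s}_n,\mathbf{t}_n)$ is $\mu$, and the conditional law of $\mathbf{s}_n$ given $\mathbf{t}_1,\ldots,\mathbf{t}_n$ is $\mu(\cdot\mid\mathbf{t}_n)$. The last property is what lets the deviation payoff factor through $y$ (the receiver's actions are conditionally independent of $\mathbf{s}_n$ given the fictitious history). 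The paper builds this coupling explicitly from a measure $\bar\mu$ on triples $(t',s,t)$ and uses \textbf{Assumption A} only to verify a compatibility identity (that every $\mu\in\calM$ satisfies the balance equation defining $\calM'$); without that verification the claim that ``any copula can be simulated'' is an assertion, not a proof.
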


Provided that $\widehat{E}(\calM)\neq \emptyset$ and that \textbf{Assumption A} is met, Theorems \ref{theorem1} and \ref{theorem2}
thus imply that the set of sequential equilibrium payoffs $SE_\delta$  converges to the set $E(\calM)$ (as soon as a randomizing device is available).

Note that the set $\calM$ of copulas only depends on the invariant measure $m$, and not on finer details of the transition function.
A striking implication of the characterization is that, under \textbf{Assumption A}, the limit
set of equilibrium payoffs therefore only depends on the invariant measure $m$.
In particular, the limit set of equilibrium payoffs is the same as when the states are drawn independently across stages.
That is, the amount of state persistence is irrelevant for the determination of the limit set of equilibrium payoffs.

If the initial state $s$ were to remain fixed throughout the play,
the game would fall into the class of repeated games with
incomplete information introduced by Aumann and Maschler (1995).
(This is the setup studied in Golosov et al. (2009).) In this
case, the limit set of  discounted equilibrium payoffs, when $\delta$ goes to 1,   is typically not equal to
$E(\calM)$. Hence, there is a discontinuity in the limit set of
equilibrium payoffs when  successive
states become perfectly autocorrelated.\footnote{Our Theorems
\ref{theorem1} and \ref{theorem2} extend to cover the case of
uniform equilibrium payoffs.}

By contrast, for a fixed discount factor, the set of equilibrium
payoffs is upper hemi-continuous with respect to the transition
function. The source of this apparent paradox can be traced back
to the fact that, in loose terms, the convergence of the set
$SE_\delta$ to $E(\calM)$ is slowlier, the more correlated successive states
are.

\bigskip

The main insight to be derived from Theorem \ref{theorem2} is the following. 
The incentive compatibility condition 
\textbf{C2} is a very \emph{strong  }one. Indeed, it only  requires from deviations
that the distribution of  announcements matches the invariant measure. In particular, according to Theorem \ref{theorem2}, all equilibria are 
payoff equivalent to equilibria in which the receiver only checks that the announcements frequencies are consistent with $m$. 
Yet, much more sophisticated checks would be available to the receiver. The receiver might \textit{e.g.} check that the empirical distribution of two-letter words $(s,s')$ 
matches the transition function $p$,  as in Escobar and Toikka (2010), or look at the distribution of three-letter words, etc. This might potentially allow the receiver to impose \emph{weaker} incentive constraints than the one in \textbf{C1}, and therefore, allow for equilibrium payoffs outside of $E(\calM)$. Theorem \ref{theorem2} thus identifies one class 
of Markov chains for which this is not the case. 

 We provide below an example where
 the conclusion of Theorem \ref{theorem2} fails to hold if \textbf{Assumption A} is not satisfied.
Thus, in general, the limit set of sequential equilibrium payoffs does not only depend on the invariant measure, but also on
finer details of the transition function.

\begin{example}
\label{example7} Consider a game with 5 states $S:=\{0,1,2,3,4\}$.
The sequence of states follows a random walk on $S$. When in $s$,
the chain moves either to $s+1$ $(\rm{mod}\ 5)$ or to $s-1$
$(\rm{mod}\ 5)$ with equal probabilities. The action set $B$ of
the receiver coincides with $S$, and the payoff function is
described in Figure \arabic{figurecounter}, where $c>1$.

\centerline{
\begin{picture}(220,120)(-20,0)
\put( 10,107){$b=0$}
\put( 50,107){$b=1$}
\put(90,107){$b=2$}
\put(130,107){$b=3$}
\put(170,107){$b=4$}
\put(-30,87){$s=0$}
\put(-30,67){$s=1$}
\put(-30,47){$s=2$}
\put(-30,27){$s=3$}
\put(-30,7){$s=4$}
\put( 0,0){\numbercellong{$0,0$}}
\put(40,0){\numbercellong{$0,0$}}
\put(80,0){\numbercellong{$0,0$}}
\put(120,0){\numbercellong{$0,0$}}
\put(160,0){\numbercellong{$1,1$}}
\put( 0,20){\numbercellong{$0,0$}}
\put(40,20){\numbercellong{$0,0$}}
\put(80,20){\numbercellong{$0,0$}}
\put(120,20){\numbercellong{$1,1$}}
\put(160,20){\numbercellong{$0,0$}}
\put( 0,40){\numbercellong{$0,0$}}
\put(40,40){\numbercellong{$0,0$}}
\put(80,40){\numbercellong{$1,1$}}
\put(120,40){\numbercellong{$0,0$}}
\put(160,40){\numbercellong{$0,0$}}
\put( 0,60){\numbercellong{$c,0$}}
\put(40,60){\numbercellong{$1,1$}}
\put(80,60){\numbercellong{$0,0$}}
\put(120,60){\numbercellong{$0,0$}}
\put(160,60){\numbercellong{$0,0$}}
\put( 0,80){\numbercellong{$1,1$}}
\put(40,80){\numbercellong{$c,0$}}
\put(80,80){\numbercellong{$0,0$}}
\put(120,80){\numbercellong{$0,0$}}
\put(160,80){\numbercellong{$0,0$}}
\end{picture}
} \centerline{Figure \arabic{figurecounter}: The game in Example
\ref{example7}.}
\end{example} \addtocounter{figurecounter}{1}

Thus, both players receive a payoff 1 if the action matches
the current state, and 0 otherwise, except when the receiver
chooses action 1 in state 0, or action 0 in state 1.

The payoff vector $(1,1)$ is not in $E(\calM)$ as soon as $c > 1$.
Indeed, the stationary strategy $y:A\to \Delta(B)$ defined by
$y(s\mid s)=1$ is the only strategy such that $U(\mu_0,y)=(1,1)$.
But then, the sender profits by reporting $t=1$ whenever $s=0$,
and $t=0$ whenever $s=1$. On the other hand, $(1,1)$ is an
equilibrium payoff, as soon as $c<\frac32$, provided the players
are patient enough. Indeed, consider the strategy of the receiver
in which he matches the announcement of the sender, as long as
$|a_{n+1} - a_n| = 1$ modulo 5, and switches
forever to the babbling equilibrium (\textit{e.g.}, playing always $b=4$)
if $|a_{n+1} - a_n| \neq 1$ modulo 5 for some
stage $n$. Provided $c$ is not too large, the best response of the
sender is to report the true state. If instead, say, the sender
chooses to report $t=1$ when in fact $s=0$ in a given stage, he
gains $c-1$, but then in the next period, with probability
$\frac{1}{2}$ the new state will be $s=4$, and then he will either
report $t \in \{0,2\}$ and receive 0, or report $t \in \{1,3,4\}$
and be punished with the babbling equilibrium payoff $\frac{1}{5}$
forever. Provided the players are patient enough, such a deviation
is not profitable.

In this equilibrium, the receiver checks that the one-step
transitions between \emph{successive} announcements are consistent
with the transitions of the Markov chain. As it turns out, under
\textbf{Assumption A}, such a sophisticated statistical analysis
of the announcements is not more powerful than a statistical
analysis which is based only on the empirical frequencies of the
different announcements. \myendexample

\bigskip

Theorems \ref{theorem1} and \ref{theorem2} hold as soon as the sender knows the current state. As will be clear from the proof, they continue
to hold if the sender knows \emph{more}.
In particular, they hold in the extreme case where  the sender learns the entire sequence of realized states in stage 1, or in any intermediate setup.

\bigskip

Note that we interpret the case $\delta\to 1$ as players being very patient. It is  not possible here to interpret it as a situation in which players would interact more and frequently. Indeed, a proper analysis of this latter case would take into account the impact on transitions: when players interact more frequently, states become more persistent between successive interactions.

\subsection{An illustration}

We here analyze a simple, specific example to show how to pin down the set of equilibrium payoffs using our results. We let the set of states be $S=\{s_0,s_1,s_2\}$. Between any two stages, the state changes with probability one, and each of the two possible states is equally likely. Thus, $p(t\mid s)=\frac12$ for every $s\neq t\in S$.  Note that \textbf{Assumption A} on the transition function does hold, and that the invariant measure assigns probability $\frac13$ to each state.

 The receiver has three actions, denoted $L, M$ and $R$, and the payoffs in the different states are given by the matrix
\[\left(\begin{array}{ccc}
1,1 & 0,0 & 0,0 \\
0,0 & 1,0 & 0,1 \\
0,0 & 0,1 & 1,0 
\end{array}
\right)
\]
where each row corresponds to a state, and each column to an action. For instance, the first row specifies the payoffs in state $s_0$, as a function of the action of the receiver. 

All extreme points of the feasible set are obtained by having the sender report truthfully the state, and  
the receiver then play a pure, state-dependent, action. Thus, all extreme points are obtained by picking one entry in each row, and averaging. For instance,  picking $L$ (resp. $M$, $R$) in row $s_0$ (resp. $s_1$, $s_2$) and averaging over states leads to a payoff of $(1,\frac13)$. One checks that the feasible set is 
the convex hull of the five payoffs $(0,0)$, $(0,\frac23)$, $(\frac13,1)$, $(\frac23,0)$ and $(1,\frac13)$, see Figure \arabic{figurecounter} below.

\begin{center}
\begin{tikzpicture}[scale = 3]

\draw[thin,->] (0,0) -- (1.5,0); 
\draw (1.5,0) node[right] {$\gamma^1$}; 
\draw [thin,->] (0,0) -- (0,1.5); 
\draw (0,1.5) node[above] {$\gamma^2$};

\fill[color=gray!20] (0,0) -- (0,2/3)  -- (1/3,1)  -- (1,1/3)  -- (2/3,0) --cycle;

\draw (0,2/3)node[left]{(0,${2\over 3}$)};
\draw  (1/3,1)node[above]{(${1\over 3}$,1)};
\draw  (1,1/3)node[right]{(1,${1\over 3}$)};
\draw (2/3,0)node[below]{(${2\over 3}$,0)};

\draw (1/3,1) node[right]{The feasible set};
\draw (3/4,7/12) node[right]{The equilibrium set};

\draw(2/3,-1/4) node[below]{Figure \arabic{figurecounter}};

\draw [fill=gray!80](1/3,1/3) -- (2/3,2/3) -- (1,1/3) -- (1/3,1/3);
\draw (1/3,1/3) -- (1,1/3);
\end{tikzpicture}
\end{center}
Without any information on the state, all three actions of the receiver yield $\frac13$, hence 
$v^2=\frac13$.

Let $\gamma=U(\mu_0,y)\in E(\calM)$ be a (limit) equilibrium payoff, and denote by $\mu$ the copula obtained when the sender exchanges the two states $s_1$ and
$s_2$ when reporting. Thus, $\mu(s_1,s_2)=\mu(s_2,s_1)=\mu(s_0,s_0)=\frac13$, and $\mu(s,t)=0$ otherwise. Since the payoffs of the two players are exchanged in the two states $s_1$ and $s_2$, one has 
$U^1(\mu,y)=U^2(\mu_0,y)$ and $U^1(\mu_0,y)=U^2(\mu,y)$. The incentive condition $U^1(\mu_0,y)\geq U^1(\mu,y)$ thus yields $\gamma^1\geq \gamma^2$.

Note finally that the sum of the players' payoffs cannot exceed 2 in state $s_0$, and 1 in states $s_1$ and $s_2$. Thus, $\gamma^1+\gamma^2\leq \frac13 (2+1+1)=\frac43$.

Hence, any equilibrium payoff $(\gamma^1,\gamma^2)$ lies in the shaded triangle  defined by the inequalities $\gamma^2\geq \frac13$, $\gamma^1+\gamma^2\leq \frac43$, $\gamma^1\geq \gamma^2$, see Figure \arabic{figurecounter} below. 

On the other hand, each of the extreme points of this triangle is an equilibrium payoff. Indeed, $(\frac13,\frac13)$ is the babbling equilibrium payoff, while $(\frac23,\frac23)=U(\mu_0,y_1)$ and $(1,\frac13)=U(\mu_0,y_2)$, where $y_1:A\to \Delta(B)$  plays $L$ when told $s_0$, and randomizes between $M$ and $R$ otherwise, while $y_2$ plays $L$, $M$ and $R$ in states $s_0$, $s_1$ and $s_2$ respectively. 

As a result, the set of equilibrium payoffs is equal to the shaded triangle in Figure \arabic{figurecounter}.

\addtocounter{figurecounter}{1}

\section{Proofs}
\label{section proof}

\subsection{Proof of Theorem \ref{theorem1}}\label{sec_prooftheorem1}

We here provide most details of the proof of Theorem \ref{theorem1}. Some technical details are in the Appendix.
Since $\widehat{E}(\calM)\neq \emptyset$, there is
$y_0:A\to \Delta(B)$ such that
 $U^2(\mu_0,y_0)>v^2$ and $U^1(\mu_0,y_0)>U^1(\mu,y_0)$ for every $\mu\in \calM$, $\mu\neq \mu_0$.

Let $\ep >0$ and $U(\mu_0,\bar{y})\in E(\calM)$ be arbitrary, and define $y:=\ep y_0 +(1-\ep )\bar y$. It is sufficient to prove that $U(\mu_0,y)$ is arbitrarily close to some sequential equilibrium payoff of the $\delta$-discounted game, provided $\delta$ is high enough.

\subsubsection{The strategies}
\label{section strategies}

Let some integer $N\in \dN$, and a discount factor $\delta$ be given. We here define a strategy profile  $(\sigma_*,\tau_*)$. 

According to $(\sigma_*,\tau_*)$, the play is divided into consecutive blocks of $N$ stages. At the beginning of each block, players discard past information, and re-start playing a $N$-stage profile $(\sigma_0,\tau_0)$, where $\tau_0$ is a \emph{pure} strategy. In case the receiver deviates from the pure strategy $\tau_0$, the players switch to babbling play forever. 

We now construct $\sigma_0$ and $\tau_0$, starting with $\tau_0$. Consider any block of $N$ stages. According to $\tau_0$, the receiver "listens" to the reported state $a_n$  in stage $n$ and plays $y(\cdot\mid a_n)$, as long no state  has been reported too often. As soon as this fails to be the case, the receiver substitutes to the actual report of the sender some fictitious report $\theta_n$, and plays according to $y(\cdot \mid \theta_n)$. 

To be formal, we pick a distribution  $m_N\in \Delta(S)$ which best approximates the invariant measure $m$, among all distributions $\tilde m\in \Delta(S) $ such that $N\tilde m(s)$ is an integer for all $s$. 

For $s\in S$ and $n\in \dN$,
we  denote by $\textbf{N}_n(s)=|\{k\leq n: a_k=s\}|$ the number of stages where the sender reported state $s$, and we set
\[q:=\min\{1\leq n\leq B: \textbf{N}_n(a_n)>Nm_N(a_n)\}\]
($\min\emptyset=+\infty$). Intuitively, each
state $s\in S$ is allotted a quota of announcements 
equal to $N\; m_N(s)$. The stage $q$ is the first
stage in which quotas are no longer met.  From stage $q$ until the end of the block, the receiver substitutes fictitious reports to actual ones.

Formally, we let $(\theta_n)$ ($n=1,\ldots, N$) be a sequence such that
\begin{description}
\item[F1.] $\theta_n=a_n$ for $n<q$;
\item[F2.] For each $s\in S$, the equality $|\{n\leq N: \theta_n=s\}|=Nm_N(s)$ always hold;
\item[F3.] Conditional on $(a_1,\ldots, a_q)$, the variables $(\theta_{q},\ldots, \theta_N)$ are deterministic.
\end{description}
We will refer to $\theta_n$ as the \emph{announcement} at
stage $n$. Condition \textbf{F1} means that the announcements
coincide with the sender's actual announcements prior to stage $q$; condition
\textbf{F2} ensures that the entire sequence of announcements
always satisfies the quotas; condition \textbf{F3} ensures in
particular that the  fictitious announcements are commonly known
between the two players. 

Thanks to the public randomizing device, the strategy $\tau_0$ may be rewritten as a pure strategy.
\footnote{Indeed, denote by $X_n\sim \calU([0,1])$ the output of the public device in stage $n$, 
and label the receiver's actions
from 1 to $|B|$. We let the strategy $\tau_0$ instruct the receiver to
choose the action $b\in B$ whenever $\displaystyle
\sum_{i=1}^{b-1}y(i \mid \theta_n)\leq X_n <\sum_{i=1}^{b}y(i \mid
\theta_n)$.   In effect, the device is performing publicly the desired randomization.}

\bigskip

The strategy  $\sigma_0$ is defined to be any pure best-reply strategy of the sender to $\tau_0$ in the $N$-stage
$\delta$-discounted game starting in stage 1. Note that the strategy $\sigma_0$ is also a best-reply  to $\tau_0$ on any of the consecutive blocks of $N$ stages, conditional on past play.\footnote{This observation relies on the fact that, in the first block, $\sigma_0$ is a best-reply to $\tau_0$, no matter what 
the distribution of the initial state $\textbf{s}_1$ is.} In particular, $\sigma_*$ is a best-reply to $\tau_*$.

\subsubsection{Equilibrium properties}

We here argue that, for appropriate choices of $N$ and of $\delta$, $\tau_*$ is a 
best-reply to $\sigma_*$, and $(\sigma_*,\tau_*)$ induces a payoff arbitrarily close to $U(\mu_0,y)$.\footnote{Off-equilibrium path beliefs and sequential rationality issues are discussed in the Appendix.}

\begin{proposition}\label{prop eq}
For every $\eta>0$, there exists $N_0\in \dN$ such that the following holds.  For every $N\geq \dN$, 
there is $\delta_0<1$ such that, for every $\delta \geq \delta_0$, the profile $(\sigma_*,\tau_*)$ is a sequential equilibrium and induces a payoff within $\eta$ of $U(\mu_0,y)$. 
\end{proposition}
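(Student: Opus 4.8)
The plan is to establish the two assertions of Proposition~\ref{prop eq} — that $(\sigma_*,\tau_*)$ is a sequential equilibrium, and that its payoff is within $\eta$ of $U(\mu_0,y)$ — in the order that respects their logical dependence: first the behavior of a best reply $\sigma_0$ of the sender, then the induced payoff, and finally the incentive constraint of the receiver. Fix $\eta>0$. The first step is to analyze, for a single block of $N$ stages, the best reply $\sigma_0$ of the sender against $\tau_0$. The key estimate is a law-of-large-numbers statement: under \emph{any} strategy of the sender in a block, the empirical joint distribution of $(s_n,\theta_n)_{n\le N}$ is, with probability close to $1$, within a small distance of some copula $\mu\in\calM$, because the marginal of $(\theta_n)$ on $S$ is pinned down to $m_N\approx m$ by conditions \textbf{F2}, while the marginal of $(s_n)$ is close to $m$ by irreducibility/aperiodicity of the chain (here is where the Markov mixing enters, and where knowing more than the current state is harmless). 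I would make this quantitative: there are $N_0$ and, for $N\ge N_0$, a function $\varepsilon(N)\to 0$ such that against any sender strategy the empirical distribution of $(s_n,\theta_n)$ is $\varepsilon(N)$-close to $\calM$ with probability $\ge 1-\varepsilon(N)$. Consequently the sender's per-block average payoff against $\tau_0$ is at most $\max_{\mu\in\calM}U^1(\mu,y)+o(1)=U^1(\mu_0,y)+o(1)$, using \textbf{C1}; and truth-telling achieves $U^1(\mu_0,y)-o(1)$ since then $q=\infty$ with high probability.

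The second step extracts the crucial structural fact about $\sigma_0$: since $U^1(\mu_0,y_0)>U^1(\mu,y_0)$ strictly for all $\mu\ne\mu_0$ (this is exactly why we mixed in $\varepsilon y_0$ to form $y$, so that \textbf{D1} holds for $y$ as well, making $\mu_0$ the \emph{unique} maximizer of $U^1(\cdot,y)$ over $\calM$), any strategy achieving a per-block payoff within $o(1)$ of $U^1(\mu_0,y)$ must induce an empirical distribution of $(s_n,\theta_n)$ that is close to $\mu_0$ with high probability — i.e. $s_n=\theta_n$ most of the time, with high probability. By a further standard argument (the discounted sum over a long block is close to the Cesàro average when $\delta$ is close to $1$, uniformly once $N$ is fixed), the best reply $\sigma_0$ has this property in the $\delta$-discounted block for $\delta$ near $1$; I would record that the induced block distribution of $(s_n,b_n)$ is therefore close (in expectation) to $\mu_0$ pushed through $y$, i.e. to $U(\mu_0,y)$. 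Averaging over blocks — each block contributing a vanishing fraction of the discounted payoff as $\delta\to1$, and the chain re-mixing between blocks — gives that $(\sigma_*,\tau_*)$ induces a payoff within $\eta$ of $U(\mu_0,y)$, once $N\ge N_0$ and then $\delta\ge\delta_0(N)$.

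The third step is the receiver's incentive constraint, and this is where \textbf{C2} and the threat structure are used. After any history on the path, the receiver's continuation payoff under $\tau_*$ is, block by block, close to $U^2(\mu_0,y)\ge v^2$ (strictly $>v^2$ by a margin $\propto\varepsilon$ coming from the $y_0$-component and \textbf{D2}), up to $o(1)$ terms as $N$ is fixed and $\delta\to1$; this requires a uniform-over-histories version of the payoff computation, which follows from the same LLN since within any block the sender's report must still satisfy the quotas. If the receiver deviates from the pure action prescribed by $\tau_0$, play switches to babbling, worth exactly $v^2$ forever. Since within each block the receiver's stage payoff is always at least $\min_b\sum_s m(s)u^2(s,b)$ — and more to the point the conditional block payoff under $\tau_*$ is at least $v^2-o(1)$ while also the prescribed stationary response is a one-shot best reply to the announcement distribution so there is no one-shot gain — a one-shot deviation by the receiver is unprofitable for $\delta$ close to $1$; by the one-shot deviation principle this makes $\tau_*$ a best reply to $\sigma_*$. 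Finally, since $\sigma_*$ is by construction a best reply to $\tau_*$ (as noted after the definition of $\sigma_0$), $(\sigma_*,\tau_*)$ is a Nash equilibrium; sequential rationality and the specification of off-path beliefs are handled as in the Appendix (the only subtlety is the receiver's beliefs after an off-quota history, which are irrelevant once the fictitious reports $\theta_n$ are commonly known by \textbf{F3}).

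The main obstacle is the \emph{uniformity} in the third step: one needs the receiver's continuation payoff under $\tau_*$ to be close to $U^2(\mu_0,y)$ after \emph{every} on-path history, including histories reached because the sender has been mis-reporting, where the empirical distribution within the current block may be far from $\mu_0$. The resolution is that $\sigma_0$ is a best reply in \emph{every} subblock conditional on past play (the footnote's observation), so the sender cannot systematically mis-report without losing payoff, which via \textbf{C1} forces the per-block distribution back toward $\calM$ and the receiver's payoff back above $v^2-o(1)$; combined with the fact that each block is a vanishing fraction of the discounted future, this yields the needed uniform lower bound. A secondary technical point is choosing the order of quantifiers correctly — $N_0$ depending only on $\eta$, then $\delta_0$ depending on $N$ — which the LLN-then-discounting structure delivers automatically.
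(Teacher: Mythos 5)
Your proposal follows essentially the same route as the paper's proof: a law-of-large-numbers bound for the truth-telling benchmark, the best-reply property of $\sigma_0$ to transfer that bound to the sender's actual play, the strict-maximizer property of $\mu_0$ over $\calM$ (guaranteed by mixing in $\varepsilon y_0$, which is exactly the content of Lemma \ref{lemm2}) to force the expected block distribution close to $\mu_0$, and the babbling threat together with \textbf{D2} for the receiver's incentives. One small inaccuracy: the prescribed response $y(\cdot\mid a)$ need \emph{not} be a one-shot best reply for the receiver, so there can be a one-shot gain; the deviation is deterred purely because that gain is of order $1-\delta$ while the continuation loss is bounded below by roughly $U^2(\mu_0,y)-v^2>0$ --- the continuation-payoff comparison you also give, which is the argument that actually carries the step.
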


The complete proof of Proposition \ref{prop eq} is in the Appendix. The crucial step consists in showing that the fact that $\sigma_0$ is a best-reply to $\tau_0$ \emph{implies} that, with high probability, $\sigma_0$ reports the true state in most stages. 
This is the content of Lemma \ref{lemmmain} below. In the statement of the lemma,
 $\mu_{\sigma_0,\tau_0}$ is the (expected, undiscounted) joint distribution of states and reports in 
 a block of $N$ stages. That is, for each $(s,a)\in S\times S$,
\[\mu_{\sigma_0,\tau_0}(s,a):=\E_{\sigma_0,\tau_0}\left[\frac{1}{N}\sum_{n=1}^N1_{\{s_n=s,\theta_n=a\}}\right]\]
is the expected frequency of the pair $(s,a)$ over $N$ stages (recall that the  distribution of the initial state, $s_1$, is the invariant measure). 

\begin{lemma}\label{lemmmain}
For every $\eta >0$, there is $N_0\in \dN$, such that the following holds. For every $N\geq N_0$, there is $\delta_0<1$, such that,
for every $\delta\geq \delta_0$, one has
\[\|\mu_{\sigma_0,\tau_0}-\mu_0\|<\eta.\]
\end{lemma}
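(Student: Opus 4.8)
The idea is to squeeze the conclusion out of the single hypothesis that $\sigma_0$ is a best reply to $\tau_0$; this is precisely the ``best reply $\Rightarrow$ truthful most of the time'' insight mentioned above. First I would record the constraints that $\tau_0$ places on $\mu_{\sigma_0,\tau_0}$, \emph{no matter how the sender plays}. By \textbf{F2}, in every block the empirical distribution of the announcements $(\theta_n)_{n=1}^N$ is identically $m_N$, so the $A$-marginal of $\mu_{\sigma_0,\tau_0}$ equals $m_N$ exactly; and the $S$-marginal equals $m$ exactly, because $\textbf{s}_1\sim m$ implies each $s_n\sim m$. Since $m_N$ was chosen to approximate $m$ and hence $\|m-m_N\|=O(1/N)$, a routine mass-transport argument (redistribute $O(1/N)$ units of mass across columns while keeping the row marginals fixed) produces a genuine copula $\hat\mu\in\calM$ with $\|\mu_{\sigma_0,\tau_0}-\hat\mu\|\le\kappa_N$, where $\kappa_N\to 0$.

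Next I would express payoffs through these distributions. Because the receiver plays $y(\cdot\mid\theta_n)$ at every stage of a block, the sender's \emph{undiscounted} average payoff over a block against $\tau_0$, using any strategy $\sigma$, equals $U^1(\mu_{\sigma,\tau_0},y)$; and for fixed $N$ the $\delta$-discounted block payoff converges to this undiscounted average as $\delta\to 1$, since payoffs are bounded and the normalized within-block discount weights converge to $1/N$. Applying this to the truthful strategy $\sigma_{\rm tr}$, a concentration/maximal estimate for the occupation counts $\textbf{N}_n(s)$ of the (irreducible, aperiodic, stationary) Markov chain shows that with probability tending to $1$ as $N\to\infty$ the first quota-violation time $q$ exceeds $(1-\beta)N$; consequently the diagonal mass of $\mu_{\sigma_{\rm tr},\tau_0}$ is at least $1-\beta-o_N(1)$, so $\mu_{\sigma_{\rm tr},\tau_0}\to\mu_0$ and $U^1(\mu_{\sigma_{\rm tr},\tau_0},y)\to U^1(\mu_0,y)$. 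The best-reply property of $\sigma_0$ then gives, for $N$ large and $\delta$ close to $1$, the lower bound $U^1(\mu_{\sigma_0,\tau_0},y)\ge U^1(\mu_0,y)-\beta'$ with $\beta'$ as small as we wish.

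Finally I would invoke a strict incentive inequality for the mixture $y=\ep y_0+(1-\ep)\bar y$. Combining \textbf{D1} for $y_0$ with \textbf{C1} for $\bar y$ and using that $U^1(\mu,\cdot)$ is linear in $y$, we get $U^1(\mu_0,y)>U^1(\mu,y)$ for every $\mu\in\calM\setminus\{\mu_0\}$, so by compactness of $\calM$ the gap $\rho:=\min\{U^1(\mu_0,y)-U^1(\mu,y):\mu\in\calM,\ \|\mu-\mu_0\|\ge\eta/2\}$ is strictly positive. Now suppose $\|\mu_{\sigma_0,\tau_0}-\mu_0\|\ge\eta$. Once $\kappa_N<\eta/2$ we have $\|\hat\mu-\mu_0\|\ge\eta/2$, hence $U^1(\hat\mu,y)\le U^1(\mu_0,y)-\rho$; but the previous paragraph gives $U^1(\hat\mu,y)\ge U^1(\mu_{\sigma_0,\tau_0},y)-O(\kappa_N)\ge U^1(\mu_0,y)-\beta'-O(\kappa_N)$. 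Choosing $N_0$ large enough (to kill $\kappa_N$ and the law-of-large-numbers error) and then, for each $N\ge N_0$, $\delta_0$ close enough to $1$ (to kill $\beta'$) makes these two estimates incompatible. Hence $\|\mu_{\sigma_0,\tau_0}-\mu_0\|<\eta$, which is the claim.

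\textbf{Main obstacle.} The delicate step is the second paragraph: controlling the truthful strategy against $\tau_0$, i.e.\ showing that $q$ is close to $N$ with high probability, so that only a vanishing fraction of a block is diverted to fictitious reports and $\mu_{\sigma_{\rm tr},\tau_0}$ genuinely converges to $\mu_0$. This requires a uniform-in-$n$ concentration bound for $\textbf{N}_n(s)-n\,m(s)$ (a maximal/large-deviation inequality for finite Markov chains) together with the arithmetic observation that $N m_N(s)$ exceeds $n\,m(s)$ by order $N-n$ once $n$ is bounded away from $N$; in particular, for $n\le\beta' N$ with $\beta'<\min_s m(s)$ no quota violation is even possible. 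The remaining ingredients — the mass-transport approximation by a copula, the passage from undiscounted to discounted block payoffs for fixed $N$, and the derivation of the strict inequality for $y$ from \textbf{C1}/\textbf{D1} — are routine.
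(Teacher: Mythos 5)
Your proposal is correct and follows essentially the same route as the paper's proof: benchmark against the truthful strategy via the ergodic theorem (controlling the quota-violation time $q$), pass from discounted to undiscounted block payoffs for fixed $N$ as $\delta\to 1$, invoke the best-reply property of $\sigma_0$ to get $U^1(\mu_{\sigma_0,\tau_0},y)\geq U^1(\mu_0,y)-\beta'$, and convert this near-optimality into proximity to $\mu_0$ using the strict incentive gap enjoyed by $y=\ep y_0+(1-\ep)\bar y$. The only immaterial difference is in the last step: where you take a compactness minimum $\rho$ over $\{\mu\in\calM:\|\mu-\mu_0\|\geq\eta/2\}$, the paper's Lemma \ref{lemm2} derives the explicit linear bound $U^1(\mu_0,y)-U^1(\mu,y)\geq \frac{\ep c_1}{c_2}\|\mu-\mu_0\|_1$ from the extreme-point decomposition of $\calM$.
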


We will provide insights into the proof of Lemma \ref{lemmmain} below. For the time being, we show how to 
deduce  Proposition \ref{prop eq} from  Lemma \ref{lemmmain}.  Observe first that, by  definition of $\mu_{\sigma_0,\tau_0}$, the expected \emph{average}%
\footnote{That is, when payoffs in the different stages are not
discounted.} payoff induced by $(\sigma_0,\tau_0)$ over a single
block is equal to $U(\mu_{\sigma_0,\tau_0},y)$. For fixed $N$, and since the profile
$(\sigma_*,\tau_*)$ consists in periodic repetitions of $(\sigma_0,\tau_0)$, the discounted payoff induced by
$(\sigma_0,\tau_0)$ therefore converges to $U(\mu_{\sigma_0,\tau_0},y)$  as $\delta \to 1$. In particular, it
is thus arbitrarily close to the target payoff $U(\mu_0,y)$.

We now argue that $\tau_*$ is   a best-reply to $\sigma_*$. Following any history, the continuation payoff of the receiver is equal to the sum of his payoffs until
the end of the current block and of the continuation payoff from
the next block on. The latter is equal to the discounted payoff induced by
$(\sigma_*,\tau_*)$, computed using the belief held by the receiver at the beginning of this block. For fixed $N$,
this continuation payoff thus converges to
$U^2(\mu_{\sigma_0,\tau_0},y)$ as $\delta \to 1$.\footnote{Uniformly over all histories.}

On the other hand, any deviation from $\tau_0$, say in stage $n$, triggers a babbling play, and the
receiver's continuation payoff therefore does not exceed
$\displaystyle (1-\delta)\sum_{k=n}^{\infty}\delta^{k-n}\max_{b\in
B}u^2(p_k,b)$, where $p_k$ is the belief that the receiver will
hold at stage $k\geq n$ on the current state $s_k$. Since the 
sequence of states forms an irreducible and aperiodic chain, $p_k$ converges to $m$. For fixed $N$, 
this continuation payoff therefore converges to $v^2$ as $\delta\to 1$ (again, uniformly over all histories).  

Since
$U^2(\mu_{\sigma_0,\tau_0},y)>v^2$, this proves the best-reply
property of $\tau_0$, provided first $N$, and then $\delta$,
are chosen large enough.

\bigskip

We now turn to  Lemma \ref{lemmmain}.  We denote by $\sigma_{truth}$ the strategy of the sender that announces truthfully the current  state, no matter what. The proof of Lemma \ref{lemmmain} combines several ideas. 

First, by a law of large numbers for Markov Chains, and  if $N$ is large enough, there is a high probability that the realized state frequencies will be consistent with the quotas in most stages. Thus, under $(\sigma_{truth},\tau_0)$, there is a high probability that the receiver  follows the announcement of the sender in most stages. That is, the distribution $\mu_{\sigma_{truth},\tau_0}$ is arbitrarily close to $\mu_0$. 

Next, for fixed $N$, and for every (periodic) strategy $\sigma$ (and viewing $\tau_0$ as a periodic strategy), the discounted payoff $\gamma_\delta (\sigma,\tau_0)$ converges to $U(\mu_{\sigma,\tau_0}, y)$ as $\delta $ converges to one.

Finally, the best-reply property of $\sigma_0$ implies that $\gamma^1_\delta (\sigma_0,\tau_0)\geq \gamma^1_\delta(\sigma_{truth},\tau_0)$. 

Combining these observations, the following formal statement holds. For every $\ep>0$, there exist $N_0$ and $\delta_0$ such that, for every $\delta \geq \delta_0$, the following sequence of inequalities holds: 
\begin{equation}\label{marre}U^1(\mu_{\sigma_0,\tau_0},y)\geq \gamma^1_\delta(\sigma_0,\tau_0)-\ep \geq  \gamma^1_\delta(\sigma_{truth},\tau_0)-\ep \geq U^1(\mu_{\sigma_{truth},\tau_0},y)-2\ep
\geq  U^1(\mu_0,y)-3\ep.\end{equation}

To conclude, we will rely on Lemma \ref{lemm2} below, which critically depends on the assumption that 
$U(\mu_0,m_0)\in \widehat{E}(\calM)$. 
Denote by $\calM_e$ the (finite) set of extreme points of $\calM$.
Recall that $\mu_0 \in \calM_e$.
Set
\[c_1:=\min_{\{\mu_e \in \calM_e, \mu_e \neq \mu_0\}}\left(U^1(\mu_0,y_0)-U^1(\mu_e,y_0)\right),
\mbox{ and }c_2:=\max_{\{\mu_e \in \calM_e, \mu_e \neq \mu_0\}}\|\mu_e-\mu_0\|_1, \] and note that both $c_1$ and $c_2$ are positive.

\begin{lemma}\label{lemm2}
For every $\mu\in \calM$, one has
\[ U^1(\mu_0,y)-U^1(\mu,y)\geq \frac{\ep c_1}{c_2}\| \mu-\mu_0\|_1.\]
\end{lemma}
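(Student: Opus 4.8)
\textbf{Proof plan for Lemma \ref{lemm2}.}

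The plan is to exploit the fact that $y = \ep y_0 + (1-\ep)\bar y$ decomposes the payoff $U^1(\cdot, y)$ linearly, and then to combine the strict inequality coming from $y_0$ (which reflects $U(\mu_0, y_0) \in \widehat E(\calM)$) with the weak inequality \textbf{C1} coming from $\bar y$. Since $U^1(\cdot, y)$ is affine in its first argument, for any $\mu \in \calM$ we have
\[
U^1(\mu_0, y) - U^1(\mu, y) = \ep\bigl(U^1(\mu_0, y_0) - U^1(\mu, y_0)\bigr) + (1-\ep)\bigl(U^1(\mu_0, \bar y) - U^1(\mu, \bar y)\bigr).
\]
The second bracket is $\geq 0$ by condition \textbf{C1} applied to $\bar y$ (recall $U(\mu_0,\bar y)\in E(\calM)$), so it suffices to bound the first bracket below by $\frac{c_1}{c_2}\|\mu - \mu_0\|_1$.

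First I would reduce to extreme points. Write $\mu = \sum_{\mu_e \in \calM_e} \lambda_{\mu_e}\, \mu_e$ as a convex combination of the extreme points of the polytope $\calM$. By affineness, $U^1(\mu_0, y_0) - U^1(\mu, y_0) = \sum_{\mu_e} \lambda_{\mu_e}\bigl(U^1(\mu_0, y_0) - U^1(\mu_e, y_0)\bigr)$, and each summand with $\mu_e \neq \mu_0$ is $\geq c_1 \geq \frac{c_1}{c_2}\|\mu_e - \mu_0\|_1$ by definition of $c_1$ and $c_2$ (the term $\mu_e = \mu_0$ contributes $0$, which also trivially satisfies this bound). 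Hence $U^1(\mu_0, y_0) - U^1(\mu, y_0) \geq \frac{c_1}{c_2}\sum_{\mu_e}\lambda_{\mu_e}\|\mu_e - \mu_0\|_1 \geq \frac{c_1}{c_2}\bigl\|\sum_{\mu_e}\lambda_{\mu_e}(\mu_e - \mu_0)\bigr\|_1 = \frac{c_1}{c_2}\|\mu - \mu_0\|_1$, where the middle step is the triangle inequality for $\|\cdot\|_1$ and convexity of the norm. Multiplying by $\ep$ and adding the nonnegative $\bar y$-term gives the claim.

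The only points requiring care are that $c_1 > 0$ and $c_2 > 0$: positivity of $c_1$ is exactly the strict inequality \textbf{D1} for $y_0$ evaluated at the finitely many extreme points $\mu_e \neq \mu_0$ (using the remark after Definition \ref{defE} that $y_0$ witnesses $\widehat E(\calM)$), and $c_2 > 0$ because $\calM$ has at least one extreme point other than $\mu_0$ whenever $\calM \neq \{\mu_0\}$ — and if $\calM = \{\mu_0\}$ the lemma is vacuous. I do not anticipate a genuine obstacle here; the ``hard part,'' such as it is, is merely organizing the convex-combination estimate so that the per-extreme-point bound $c_1 \geq \frac{c_1}{c_2}\|\mu_e - \mu_0\|_1$ is visibly correct (it holds since $\|\mu_e - \mu_0\|_1 \leq c_2$ by definition of $c_2$), after which the triangle inequality does the rest.
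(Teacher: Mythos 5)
Your proposal is correct and follows essentially the same route as the paper: decompose $y=\ep y_0+(1-\ep)\bar y$, discard the nonnegative $\bar y$-term via \textbf{C1}, and expand $\mu$ over the extreme points of $\calM$ to extract the factor $c_1$, while $c_2$ controls $\|\mu-\mu_0\|_1$. The paper merely packages the last two steps as $U^1(\mu_0,y_0)-U^1(\mu,y_0)\geq(1-\alpha_0)c_1$ and $\|\mu-\mu_0\|_1\leq c_2(1-\alpha_0)$ rather than applying the ratio $c_1/c_2$ per extreme point and invoking the triangle inequality, which is an equivalent rearrangement.
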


Lemma \ref{lemm2} may be paraphrased as saying that any strategy that does approximately as well as the truth-telling strategy must 
be telling the truth in most stages, with high probability. 

The conclusion of Lemma \ref{lemmmain} follows from (\ref{marre}) combined with Lemma \ref{lemm2}. 

\subsection{Proof of Theorem \ref{theorem2}}

We here provide insights into the proof  of Theorem
\ref{theorem2}. We let $\delta<1$, and we fix a Nash equilibrium
$(\sigma,\tau)$ of the $\delta$-discounted game (with or without randomizing device). 
For clarity,  we sometimes use boldfaced letters to denote random variables.

For $s\in S$, we
define $y(\cdot \mid s)\in \Delta(B)$ as the expected
\emph{discounted} distribution of moves of the receiver in state
$s$. Formally, for $s\in S$, and $b\in B$, we set
\[
y(b\mid s)=
\frac{1}{m(s)}\E_{\sigma,\tau}\left[\sum_{n=1}^\infty(1-\delta)\delta^{n-1}1_{\{\textbf{s}_n=s,\textbf{b}_n=b\}}\right]
=\frac{\E_{\sigma,\tau}\left[\sum_{n=1}^\infty(1-\delta)\delta^{n-1}1_{\{\textbf{s}_n=s,\textbf{b}_n=b\}}\right]}{
\E_{\sigma,\tau}\left[\sum_{n=1}^\infty(1-\delta)\delta^{n-1}1_{\{\textbf{s}_n=s\}}\right]}.
\]

By construction, one has  $\gamma_\delta(\sigma,\tau)=U(\mu_0,y)$.
Indeed,
\begin{eqnarray*}
\gamma_\delta(\sigma,\tau)&=& (1-\delta)\sum_{n=1}^\infty \delta^{n-1}\E_{\sigma,\tau}[u(\textbf{s}_n,\textbf{b}_n)]=
(1-\delta)\sum_{n=1}^\infty \delta^{n-1}\sum_{s\in S,b\in B}\E_{\sigma,\tau}[1_{\{\textbf{s}_n=s,\textbf{b}_n=b\}}] u(s,b)\\
&=& \sum_{s\in S,b\in B}u(s,b)m(s) y(b\mid s)= U(\mu_0,y),
\end{eqnarray*}
as desired.

Since the distribution of $\textbf{s}_n$ is equal to $m$ for each
stage $n \in \dN$, one has $\gamma^2_\delta(\sigma,\tau)\geq v^2$,
and thus, $U^2(\mu_0,y)\geq v^2$, so that \textbf{C2} holds.

We thus need to prove that $U^1(\mu_0,y)\geq U^1(\mu,y)$ for each $\mu\in \calM$. The idea of the proof is rather straightforward, but the formal proof is fraught with many 
technical complications. Let $\mu\in \calM$ be given. We will construct a strategy $\sigma'$ of the sender such that $\gamma_\delta(\sigma',\tau)=U(\mu,y)$, so that the desired inequality will follow from the equilibrium property of $(\sigma,\tau)$. 

The strategy $\sigma'$ is designed as follows. Along the play, the sender will generate a sequence $(\mathbf{t}_n)$ of \emph{fictitious} states  that is statistically indistinguishable from the sequence $(\mathbf{s}_n)$, and such that the average distribution of the pair $(\mathbf{s}_n,\mathbf{t}_n)$ is given by $\mu$. Given such a sequence, in any stage $n$, the sender will substitute the fictitious state $\mathbf{t}_n$ to the realized state $\mathbf{s}_n$ in playing $\sigma$. Formally, following any history $(s_1,t_1,a_1,b_1,\ldots, s_n,t_n)$ consisting of realized and fictitious states, messages and actions up to stage $n$, the strategy $\sigma'$ plays the mixed move $\sigma(t_1,a_1,b_1\ldots, t_n)$ that would have been played by $\sigma$, had the realized states been $t_1,\ldots, t_n$. 

We now give some more details. Since the strategy $\tau$ may feature complex statistical tests on the successive announcements, the notion of being statistically indistinguishable has to be interpreted in a restrictive sense. 

We prove in the Appendix the following lemma.

\begin{lemma}\label{lemm4ter}
Assume Assumption A, and let $\mu\in \calM$ be given. There exists an  $S$-valued process%
\footnote{The process $(\mathbf{t}_n)_n$ is possibly defined on a probability space which
is an enlargement of the one on which $(\mathbf{s}_n)_n$ is defined.}
$(\textbf{t}_n)_n$,
such that:
\begin{description}
\item[P1]  Conditional on $\textbf{s}_n$, the
vector $(\textbf{t}_1,\ldots, \textbf{t}_n)$ is independent of the
future states $(\textbf{s}_{n+1},\textbf{s}_{n+2},\ldots)$.

 \item[P2]  The law of the sequence $(\textbf{t}_n)_n$ is the same
as the law of the sequence $(\textbf{s}_n)_n$.

\item[P3] The law
of the pair $(\textbf{s}_n,\textbf{t}_n)$ is $\mu$, for each stage
$n\in \dN$.
\item[P4] The conditional law of $\textbf{s}_n$, given
$\textbf{t}_1,\ldots, \textbf{t} _n$ is $\mu(\cdot \mid
\textbf{t}_n)$. 
\end{description}
\end{lemma}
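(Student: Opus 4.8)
The plan is to construct the process $(\mathbf{t}_n)_n$ explicitly by building an auxiliary Markov chain on the \emph{pair space} $S\times S$ whose transitions respect \textbf{Assumption A}, and then to verify \textbf{P1}--\textbf{P4} by induction on $n$. First I would observe what the target law must be: by \textbf{P3}, the one-dimensional marginal of $(\mathbf{s}_n,\mathbf{t}_n)$ is $\mu$, and by \textbf{P4} the conditional law of $\mathbf{s}_n$ given the past of the fictitious process depends only on $\mathbf{t}_n$, through $\mu(\cdot\mid\mathbf{t}_n)$. The natural device is to let $(\mathbf{s}_n,\mathbf{t}_n)_n$ be a Markov chain on $S\times S$ started from $\mu$, in which the $\mathbf{s}$-coordinate evolves according to the original transition $p$, and the $\mathbf{t}$-coordinate is coupled to it in such a way that (i) the pair marginal stays $\mu$ at every stage, and (ii) the $\mathbf{t}$-coordinate, on its own, is a copy of the original chain. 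The key point is that \textbf{Assumption A} makes this coupling possible: since $p(s'\mid s)=\alpha_{s'}$ for $s'\neq s$, the ``off-diagonal'' part of a transition is independent of the current state, so one can define a transition on $S\times S$ that moves both coordinates ``freshly'' (resampling from the appropriate conditional of $\mu$) with the common probability $\sum_{s'\neq\cdot}\alpha_{s'}$, while with the complementary probability both coordinates stay put. This is exactly the ``shock'' interpretation discussed after \textbf{Assumption A}: when a shock hits, the pair is redrawn from $\mu$; otherwise it is frozen.

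Concretely, I would define the transition kernel $P$ on $S\times S$ as follows. Write $\pi_s := \sum_{s'\neq s}\alpha_{s'}$, the probability that the original chain leaves state $s$; under \textbf{Assumption A} one checks $\alpha_{s'}=m(s')\,\pi$ in the geometric-shock case but in general $\pi_s$ may depend on $s$, so care is needed — I would first reduce to a convenient normalization or handle the state-dependence directly. From a pair $(s,t)$, with probability governed by the shock I let the new pair be drawn from $\mu$ conditioned appropriately so that, marginally, $\mathbf{s}$ follows $p(\cdot\mid s)$ exactly; with the remaining probability I keep $(s,t)$ unchanged. The two checks are then: that the $\mathbf{s}$-marginal of $P$ acting on $(s,\cdot)$ is precisely $p(\cdot\mid s)$ (this is where \textbf{Assumption A} is used in an essential way — the off-diagonal of $p$ being state-independent is what lets a single ``redraw from $\mu$'' event reproduce the correct transition for every $s$ simultaneously), and that $\mu$ is invariant for $P$ with the right marginals, which follows because $m$ is invariant for $p$ and the marginals of $\mu$ are both $m$. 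Property \textbf{P2} (that $(\mathbf{t}_n)$ is a copy of $(\mathbf{s}_n)$) would follow by the symmetric computation on the $\mathbf{t}$-coordinate; \textbf{P3} is the invariance of $\mu$; \textbf{P4} would come from reading off the conditional law of $\mathbf{s}_n$ given the whole $\mathbf{t}$-history in this Markov construction, which collapses to $\mu(\cdot\mid\mathbf{t}_n)$ because of the way the redraw is organized; and \textbf{P1} (conditional independence of $(\mathbf{t}_1,\dots,\mathbf{t}_n)$ and future $\mathbf{s}$'s given $\mathbf{s}_n$) is the Markov property of the pair chain together with the fact that, given $\mathbf{s}_n$, the future of the $\mathbf{s}$-coordinate does not consult the $\mathbf{t}$-coordinate.

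The main obstacle I anticipate is getting the coupling to simultaneously satisfy \textbf{P3} (pair marginal exactly $\mu$ at \emph{every} stage, not just asymptotically) and the exact marginal dynamics for \emph{both} coordinates. Naively redrawing the pair from $\mu$ on a shock gives the $\mathbf{s}$-coordinate the transition ``stay with probability $1-\pi_s$, else resample from $m$'', which equals $p$ only when $\alpha_{s'}=\pi_s\,m(s')$; under the general form of \textbf{Assumption A} the quantities $\alpha_{s'}$ need not factor this way across different source states, so the construction must be done more carefully — e.g.\ by making the ``redraw'' probability and the target of the redraw depend on the source state in a compensating manner, and then verifying consistency of the pair marginal requires using that $\sum_{s}m(s)p(\cdot\mid s)=m(\cdot)$ together with \textbf{Assumption A}. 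A second, more bookkeeping-level obstacle is that the sender must generate $(\mathbf{t}_n)$ causally, using only information available to her (past and current realized states), which is why \textbf{P1} is phrased as it is; I would make sure the construction is adapted to the filtration of the realized states and past fictitious states, so that $\sigma'$ as defined in the text is a genuine strategy. Once the pair chain is in hand, all four properties are short verifications, and the lemma follows.
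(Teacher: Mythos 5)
Your high-level plan --- couple a fictitious copy of the chain to the true one so that the pair has stage-marginal $\mu$, each coordinate is marginally the original chain, and the coupling is adapted --- is the right shape, and it matches the spirit of the paper's construction. But the concrete coupling you propose (with some probability a ``shock'' redraws the pair from $\mu$, otherwise both coordinates freeze) only works when the transition has the form $\alpha_{s'}=\pi\, m(s')$ with a shock intensity $\pi=\sum_{s}\alpha_s\leq 1$. \textbf{Assumption A} is strictly more general: writing $C=\sum_s\alpha_s$, one only gets $C\leq 1+\min_s\alpha_s$, and $C>1$ occurs in the very examples the paper emphasizes (the symmetric random walk on three states has $C=\tfrac32$; a generic two-state chain has $C=p(s_1\mid s_2)+p(s_2\mid s_1)$, which exceeds $1$ whenever switching is likely). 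In those cases the ``redraw with probability $C$'' event does not exist, and your fallback --- letting the redraw probability and target depend on the source state --- is exactly where the difficulty lives: once the $\mathbf{t}$-update consults $\mathbf{s}$ in a state-dependent way, it is no longer clear that the $\mathbf{t}$-marginal is a copy of the chain (\textbf{P2}) or that the conditional law of $\mathbf{s}_n$ given the \emph{whole} history $\mathbf{t}_1,\ldots,\mathbf{t}_n$ collapses to $\mu(\cdot\mid\mathbf{t}_n)$ (\textbf{P4}). You flag the obstacle honestly but do not resolve it, so the proof is incomplete precisely on the case that makes the lemma nontrivial.

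The paper's resolution supplies the missing idea. It first isolates an intertwining condition on $\mu$ (Property \textbf{P}: $\sum_{s'}\mu(s'\mid t)p(s\mid s')=\sum_{t'}\mu(s\mid t')p(t'\mid t)$ for all $s,t$) and shows by a direct computation that \emph{every} copula satisfies it under \textbf{Assumption A} --- this, not the shock decomposition, is where the assumption is used. It then defines $\bar\mu(t',s,t)=\mu(s,t)\,p(t\mid t')\,m(t')/m(t)$ and generates $\mathbf{t}_n$ from the conditional $\bar\mu(\cdot\mid \mathbf{t}_{n-1},\mathbf{s}_n)$, i.e.\ from the previous fictitious state and the \emph{current} true state; Property \textbf{P} is exactly what makes the marginals of $\bar\mu$ consistent, and \textbf{P2} and \textbf{P4} are then proved by induction. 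If you want to salvage your approach, you would need to either restrict to $C\leq 1$ (too weak) or discover an analogue of this intertwining identity; as written, the argument does not go through.
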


According to \textbf{P1}, the state $\mathbf{t}_n$ can be computed/simulated using only the information available at stage $n$: 
past and current states, and past fictitious states. This is a feasibility requirement that ensures that $\sigma'$ is well-defined. 
Condition \textbf{P2} ensures that no statistical test can discriminate between the sequences $(\mathbf{s}_n)$ and $(\mathbf{t}_n)$. 
Condition \textbf{P3} provides the desired coupling between $\mathbf{s}_n$ and $\mathbf{t}_n$. 
\bigskip

We now proceed to show that the expected payoff induced by $(\sigma',\tau)$ is then equal to $U(\mu,y)$, as claimed.

Below we will denote by $s_k, t_k, b_k$ generic values of the
random variables $\textbf{s}_k,\textbf{t}_k$ and $\textbf{b}_k$,
respectively. For any given stage $n \in \dN$, the following
sequence of equalities holds:
\begin{eqnarray}
\nonumber
\E_{\sigma',\tau}[u(\textbf{s}_n,\textbf{b} _n)]&=& \sum_{s_n,b_n}\prob_{\sigma',\tau}(\textbf{s}_n=s_n,\textbf{b}_n=b_n)u(s_n,b_n)\\
&=& \sum_{s_n}\sum_{t_1,\ldots, t_n}\sum_{b_1,\ldots, b_n}\prob_{\sigma',\tau}(s_n,t_1,\ldots, t_n,b_1,\ldots, b_n)u(s_n,b_n)\\
\nonumber
&=& \sum_{s_n}\sum_{t_1,\ldots, t_n}\sum_{b_1,\ldots, b_n}\prob_{\sigma',\tau}(s_n\mid t_1,\ldots, t_n,b_1,\ldots, b_n)
\prob_{\sigma',\tau}(t_1,\ldots, t_n,b_1,\ldots, b_n)
u(s_n,b_n)\\
\label{equ7.1}
&=& \sum_{s_n}\sum_{t_1,\ldots, t_n}\sum_{b_1,\ldots, b_n}\prob(s_n\mid t_1,\ldots, t_n)\prob_{\sigma',\tau}(t_1,\ldots, t_n,b_1,\ldots, b_n)u(s_n,b_n)\\
\label{equ7.2}
&=& \sum_{s_n}\sum_{t_1,\ldots, t_n}\sum_{b_1,\ldots, b_n}\mu(s_n\mid t_n)\prob_{\sigma',\tau}(t_1,\ldots, t_n,b_1,\ldots, b_n)u(s_n,b_n)\\
&=& \sum_{s_n,t_n,b_n}\mu(s_n\mid t_n)\prob_{\sigma',\tau}(t_n,b_n)u(s_n,b_n),\\
\label{equ5.1}
\end{eqnarray}
where (\ref{equ7.1}) holds because the variables $(\mathbf{b}_1,\ldots, \mathbf{b}_n)$ are conditionnally  independent of $s_n$
given $(\mathbf{t}_1,\ldots, \mathbf{t}_n)$, 
and (\ref{equ7.2}) holds by \textbf{P4}.

Using \textbf{P2}, and by the definition of $\sigma'$, the $\delta$-discounted sum of $\prob_{\sigma',\tau}(\mathbf{t}_n=t_n,\mathbf{b_n}=b_n)$
is equal to $\prob_{\sigma,\tau}(\mathbf{s}_n=t_n,\mathbf{b_n}=b_n)$, which is equal to $\mu(t_n)\times y(b_n\mid t_n)$.
By (\ref{equ5.1}) we now obtain
\[\gamma_\delta(\sigma',\tau)= \sum_{s,t,b}\mu(s\mid t)\mu(t)y(b\mid t)u(s,b)=U(\mu,y).\]

\section{Further results and comments}\label{sec_furtherf}

\subsection{On the condition $\widehat{E}(\calM)\neq \emptyset$}\label{sec_generic}

In the light of  existing results for repeated games, it is not
surprising that some non-empty interiority type of assumption is
needed (see Mailath and Samuelson (2006) for a survey).

As the next example illustrates, the conclusion of Theorem \ref{theorem1} fails to hold if $\widehat{E}(\calM)=\emptyset$.
\begin{example}
\label{example6} Let there be two states and two actions for the
receiver. The payoffs in the two states are given by the tables in
Figure \arabic{figurecounter}. We assume that the successive
states are independent and that the two states are equally likely.

\centerline{
\begin{picture}(90,60)(-10,-20)
\put( 20,27){$l$}
\put( 60,27){$r$}
\put( 0,0){\numbercellong{$0.5,1$}}
\put(40,0){\numbercellong{$1,1$}}
\put(20,-15){State $L$}
\end{picture}
\ \ \ \ \ \ \ \ \ \ \ \ \ \ \ \ \ \ \ \
\begin{picture}(90,60)(-10,-20)
\put( 20,27){$l$}
\put( 60,27){$r$}
\put( 0,0){\numbercellong{$0,0$}}
\put(40,0){\numbercellong{$1,1$}}
\put(20,-15){State $R$}
\end{picture}
} \centerline{Figure \arabic{figurecounter}: The game in Example
\ref{example6}.}
\end{example}
\addtocounter{figurecounter}{1}

The strategy which plays $r$ irrespective of the announcement is
weakly dominant in the one-shot game, and thus, $v^2=1$. Consider
now the stationary strategy $y$ defined by $y(l\mid L)=y(r\mid
R)=1$. The payoff vector $U(\mu_0,y)=(\frac{3}{4},1)$ is in
$E(\calM)$. However, we claim that $(1,1)$ is the unique
equilibrium payoff, irrespective of $\delta$. Here is why.
Consider any equilibrium $(\sigma,\tau)$. Plainly, the equilibrium
payoff of the receiver is equal to 1. In particular, with
probability 1 the receiver plays $r$ whenever the current state is
$R$. This implies that in every stage, and for \textit{a.e.} past history,
there is one (possibly history-dependent) message following which
the receiver plays $r$, and which is assigned positive probability
by $\sigma$. But then, the sender gets a payoff 1 by
assigning probability 1 to this specific message in every stage.
\myendexample

\bigskip

As we stressed, the statement of Theorem \ref{theorem1} is  unsatisfactory in one important respect:
while non-empty interior requirements in existing Folk Theorems are generically satisfied,
the condition $\widehat{E}(\calM)\neq \emptyset$ does \emph{not} hold generically, as the next example shows.

\begin{example}
\label{example5} Consider the game depicted in Figure
\arabic{figurecounter}, where there are two states, and the
receiver has two actions.

\centerline{
\begin{picture}(90,60)(-10,-20)
\put( 20,27){$l$}
\put( 60,27){$r$}
\put( 0,0){\numbercellong{$1,1$}}
\put(40,0){\numbercellong{$0,0$}}
\put(20,-15){State $L$}
\end{picture}
\ \ \ \ \ \ \ \ \ \ \ \ \ \ \ \ \ \ \ \
\begin{picture}(90,60)(-10,-20)
\put( 20,27){$l$}
\put( 60,27){$r$}
\put( 0,0){\numbercellong{$1,1$}}
\put(40,0){\numbercellong{$0,0$}}
\put(20,-15){State $R$}
\end{picture}
} \centerline{Figure \arabic{figurecounter}: The game in Example
\ref{example5}.}
\end{example}
\addtocounter{figurecounter}{1}

Here, $v^2=1$, and the stationary strategy $y_*$ which plays $l$
irrespective of the announcement is the only stationary strategy
that satisfies \textbf{C2}. Hence, $E(\calM)$ contains a single
payoff vector, $(1,1)$, and $\widehat E(\calM)$ is empty. When
payoffs are slightly perturbed, the strategy $y_*$ remains the
only strategy satisfying \textbf{C2}, therefore $\widehat
E(\calM)=\emptyset$ for any such perturbation. \myendexample

\bigskip

Example \ref{example5} suggests that if all  strategies $y$ for which $U(\mu_0,y)$ is in $E(\calM)$ are constant strategies,
then the set $\widehat E(\calM)$ is empty, even when payoffs in the game are slightly perturbed. We build on this intuition, and introduce a
new condition.

\begin{description}
\item[Condition B.] There is a \emph{non-constant} map $y:S\to \Delta(B)$ such that $U(\mu_0,y)\in E(\calM)$.
\end{description}

If \textbf{condition B} is not met, then all equilibrium payoffs are babbling.

In Theorem \ref{theorem3} below, we fix the transition function of
the Markov chain $p$, and identify a game to a point in the space
$\dR^{2\times S\times B}$ of payoff functions.

\begin{theorem}\label{theorem3}
Let a game $G$ be given.

If condition \textbf{B} holds for $G$, then any neighborhood of
$G$ contains a game $G'$ with $\widehat{E}_{G'}(\calM)\neq
\emptyset$.

If condition \textbf{B} does not hold for $G$, there is a
neighborhood $\calN$ of $G$ such that, for every game in $\calN$,
condition \textbf{B} does not hold.
\end{theorem}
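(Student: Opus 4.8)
The plan is to prove the two parts of Theorem \ref{theorem3} separately, the second being essentially immediate and the first requiring a perturbation argument.

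For the second statement, I would argue that the failure of condition \textbf{B} is an open property. Recall that condition \textbf{B} asks for the existence of a non-constant $y:S\to\Delta(B)$ with $U(\mu_0,y)\in E(\calM)$, i.e. satisfying \textbf{C1} and \textbf{C2}. If no such $y$ exists for $G$, then every $y$ satisfying \textbf{C2} must be constant (the constant map playing a $v^2$-achieving action always satisfies \textbf{C1} trivially, since $U(\mu_0,y)=U(\mu,y)$ for constant $y$ and any $\mu\in\calM$). The key observation is that the set of constant maps $y$ that satisfy \textbf{C2} with \emph{equality} is stable, and more importantly that \textbf{C1} — in its equivalent form \textbf{C'1} from Lemma \ref{lemmbasic} — is violated for every non-constant $y$ under $G$, and that this is an open condition on the payoff function. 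One has to be slightly careful: \textbf{C'1} is a closed condition, so its negation is open only at strict violations. The clean way is: if \textbf{B} fails for $G$, then for every non-constant $y$ either \textbf{C2} fails strictly or \textbf{C'1} fails strictly (a non-strict failure of both would, by a compactness/boundary argument on the polytope of stationary strategies, produce a nearby non-constant $y$ in $E(\calM)$, contradicting the failure of \textbf{B}; here I would use that $E(\calM)$ is defined by finitely many linear — in $y$ — inequalities whose coefficients depend continuously on the payoffs). Since the relevant quantities are continuous in the payoff function and the space of $y$ is compact, strict failure persists on a neighborhood $\calN$ of $G$.

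For the first statement, suppose condition \textbf{B} holds for $G$, witnessed by a non-constant $y_0:S\to\Delta(B)$ with $U(\mu_0,y_0)\in E(\calM)$. I would perturb the \emph{sender's} payoff function $u^1$ only, leaving $u^2$ (hence $v^2$ and \textbf{C2}) untouched, so that \textbf{C2} continues to hold for $y_0$ at the perturbed game. The goal is to make inequalities \textbf{D1} and \textbf{D2} hold at some $y$ close to $y_0$. For \textbf{D2}: replace $y_0$ by $y:=\ep y_* + (1-\ep)y_0$ where $y_*$ is a constant $v^2$-maximizer; this preserves \textbf{C2} and, if $U^2(\mu_0,y_0)=v^2$, strictly improves it unless $y_0$ itself is a $v^2$-maximizer — but $y_0$ is non-constant, so a small mixing argument together with a (generic, hence arrangeable by an arbitrarily small further perturbation of $u^2$ — or simply by noting the measure-zero exceptional set) ensures $U^2(\mu_0,y)>v^2$; alternatively one keeps $u^2$ fixed and argues $y_0$ can be chosen on the relative interior of the face of strategies achieving \textbf{C2} so that \textbf{D2} already holds. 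For \textbf{D1}, equivalently \textbf{D'1}: I want $\sum_s u^1(s,y(\cdot\mid s)) > \sum_s u^1(s,y(\cdot\mid\phi(s)))$ for every non-identity permutation $\phi$. By hypothesis \textbf{C'1} gives the non-strict inequalities. Now perturb $u^1$ by adding $\lambda\,w(s,b)$ for a generic small $\lambda>0$ and a cleverly chosen $w$: concretely, pick $w$ so that $s\mapsto w(s,\cdot)$ "rewards matching" in a way that makes truthful reporting strictly better — e.g. mimic the structure of a strictly proper assignment, exploiting that $y$ is non-constant so that $y(\cdot\mid s)\neq y(\cdot\mid\phi(s))$ for at least one $s$ whenever $\phi\neq\mathrm{id}$. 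Since there are only finitely many permutations $\phi$, a single perturbation direction $w$ making $\sum_s w(s,y(\cdot\mid s)) > \sum_s w(s,y(\cdot\mid\phi(s)))$ for all $\phi\neq\mathrm{id}$ suffices, and such a $w$ exists precisely because the vectors $\{y(\cdot\mid s)\}_{s\in S}$ are not all equal. The perturbed game $G'$ then has $y\in\widehat{E}_{G'}(\calM)$.

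The main obstacle I anticipate is constructing the perturbation direction $w$ for \textbf{D'1} uniformly over all non-identity permutations while simultaneously not destroying the non-strict inequalities that already hold — i.e. verifying that a single small $w$ works. The resolution is a dimension count: the map $y$ being non-constant means the family $(y(\cdot\mid s))_{s}$ is not constant, so for each non-identity $\phi$ the "rearranged" family differs from the original in at least one coordinate; the set of $w\in\dR^{S\times B}$ for which $\sum_s w(s,y(\cdot\mid s)) \le \sum_s w(s,y(\cdot\mid\phi(s)))$ is a linear halfspace not containing a full neighborhood of any interior point of its complement, so the finite intersection of the open halfspaces (over $\phi\neq\mathrm{id}$) is nonempty provided no two distinct permutations induce "opposite" constraints — which cannot happen here because if $\phi$ strictly helps the sender then $\phi^{-1}$ strictly hurts, and we only need the strict inequality in one direction. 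A careful but routine argument then shows the intersection is a nonempty open cone, from which any sufficiently short vector gives the required $G'$. A secondary subtlety — handling the boundary case where $U^2(\mu_0,y_0)=v^2$ exactly — I would dispatch by the mixing trick with $y_*$ described above, noting that non-constancy of $y_0$ prevents $y$ from being a constant $v^2$-maximizer.
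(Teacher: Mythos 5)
There are genuine gaps in both halves of your argument. For the second statement, your openness argument rests on compactness of the space of strategies, but the set of \emph{non-constant} maps $y$ is not closed: a sequence of games $G_\ep\to G$ could admit non-constant solutions $y_\ep$ of the system \textbf{C1}--\textbf{C2} that converge to a \emph{constant} solution of the system for $G$, in which case the amount by which $y_\ep$ violates the constraints of $G$ tends to zero and no uniform ``strict failure'' bound is available. Ruling this degeneration out is the whole difficulty; the paper does it by taking a semi-algebraic selection $\ep\mapsto(u_\ep,y_\ep)$, expanding $y_\ep$ as a Puiseux series, and showing that the first non-constant coefficient, suitably renormalized, yields a non-constant solution $w$ of the \emph{unperturbed} system --- crucially using that for every payoff function the \textbf{C1}-constraints vanish identically on constant maps and the \textbf{C2}-constraint is $\leq 0$ there (properties \textbf{R1}--\textbf{R2}), so that the constant part of the expansion drops out. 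Your parenthetical ``compactness/boundary argument'' is precisely the missing step, and it is not routine.

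For the first statement there are two problems. First, your separating direction $w$ for \textbf{D'1} need not exist: if $y$ is non-constant but not one-to-one, there can be a non-identity permutation $\phi$ with $y(\cdot\mid\phi(s))=y(\cdot\mid s)$ for all $s$, and then $\sum_s w(s,y(\cdot\mid s))=\sum_s w(s,y(\cdot\mid\phi(s)))$ for \emph{every} $w$, so no perturbation of $u^1$ can make that constraint strict at $y$; your claim that the rearranged family ``differs in at least one coordinate'' for every $\phi\neq\mathrm{id}$ is false without injectivity. The paper first perturbs $y$ itself into a one-to-one map still satisfying \textbf{C1} (Lemma \ref{lemm41}), and only then perturbs $u^1$ by $\ep\, y(b\mid s)$, getting strictness from Cauchy--Schwarz. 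Second, your treatment of \textbf{D2} does not go through: mixing $y_0$ with a constant $v^2$-maximizer cannot raise $U^2(\mu_0,\cdot)$ above $v^2$ when $U^2(\mu_0,y_0)=v^2$, and it can happen that \emph{every} non-constant $y$ satisfying \textbf{C2} attains it with equality (e.g.\ when $u^2$ does not depend on $s$ and has two maximizing actions), so no choice of $y_0$ ``in the relative interior'' helps. One really must perturb $u^2$, and the perturbation must raise $U^2(\mu_0,y)$ strictly more than it raises $v^2$; the paper's choice $\tilde u^2(s,b)=u^2(s,b)+\ep P(s,b)/\bigl(P(s)P(b)\bigr)$ achieves exactly this, via the strict convexity of $q\mapsto\sum_b q(b)^2/P(b)$ and the non-constancy of $y$ (Lemma \ref{lemm31}).
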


Theorem \ref{theorem3} allows us to complete the picture provided
by Theorems \ref{theorem1} and \ref{theorem2},
provided the underlying Markov chain satisfies  \textbf{Assumption A}. Indeed, let $G$ be
a game. If Condition B holds for  the game $G$, Theorems \ref{theorem1} and
\ref{theorem2} provide a characterization of the limit set of
equilibrium payoffs for games arbitrarily close to $G$. If
condition B does not hold for the game $G$, then all games close enough to
$G$ have only babbling equilibrium payoffs.

\subsection{On the role of the randomizing device}\label{sec_device}

The randomizing device is not needed in the proof of Theorem
\ref{theorem2} to implement payoffs $U(\mu_0,y)$, whenever
$y(\cdot\mid s)$ is a pure strategy: it assigns probability 1 to
some action $b(s)$, for each $s\in S$. However, as soon as
$y(\cdot\mid s)$ is a truly mixed distribution for some state $s$,
it may be impossible to dispense with the randomizing device, as
we now argue by means of an example.

Let there be two states, $L$ and $R$. The successive states are
drawn independently in every period, and each of the two states is
equally likely. The receiver has three actions, denoted
$B=\{l,m,r\}$. The payoffs are given in Figure
\arabic{figurecounter}.

\centerline{
\begin{picture}(90,60)(-10,-20)
\put( 20,27){$l$}
\put( 60,27){$m$}
\put(100,27){$r$}
\put( 0,0){\numbercellong{$3,0$}}
\put(40,0){\numbercellong{$0,4$}}
\put(80,0){\numbercellong{$2,1$}}
\put(50,-15){State $L$}
\end{picture}
\ \ \ \ \ \ \ \ \ \ \ \ \ \ \ \ \ \ \ \
\begin{picture}(90,60)(-10,-20)
\put( 20,27){$l$}
\put( 60,27){$m$}
\put(100,27){$r$}
\put( 0,0){\numbercellong{$1,-5$}}
\put(40,0){\numbercellong{$4,-4$}}
\put(80,0){\numbercellong{$2,1$}}
\put(50,-15){State $R$}
\end{picture}
} \centerline{Figure \arabic{figurecounter}: The payoffs of the
players.}

\addtocounter{figurecounter}{1}

Plainly, $v^2=1$. Define $y_*$ to be the stationary strategy such
that $y_*(\cdot\mid R)$ assigns probability 1 to $r$, and
$y_*(\cdot\mid L)$ assigns probabilities $\frac23$ and $\frac13$
to $l$ and $m$, respectively. Then $U(\mu_0,y)=(2,\frac76)$, and
one can verify that $U(\mu_0,y_*)\in E(\calM)$ while
$\widehat{E}(\calM)\neq\emptyset$. Thus, using Theorem
\ref{theorem1}, the vector $(2,\frac76)$ can be approximated by
sequential equilibrium payoffs, when players are sufficiently
patient, provided a randomizing device is available.

We now assume that such a device is not available. Since
successive states are independent, the dynamic game can be viewed
as a infinite repetition of the one-shot information transmission
game. With this interpretation, an \emph{action} of the sender in
the one-shot game is a map $x:S\to A$, while an action of the
receiver is a map $y:A\to B$. Given an  action profile $(x,y)$,
payoffs are random, and take the value $u(s,y(x(s)))$ with
probability $m(s)$, for $s \in S$. Players then receive the public
signal $(x(s),y(x(s)))$.

We will rely on Fudenberg, Levine and Maskin's (1994)
characterization of the limit set of perfect public equilibrium
(PPE) payoffs in repeated games with public signals. Some care is
needed, as there are two dimensions according to which our
repeated game does not fit into their setup. First, they assume
that a player's payoff depends deterministically on his own action
and on the public signal, while payoffs here depend randomly on
the entire action profile $(x,y)$. Second, their result is a
characterization of public equilibrium payoffs, while we focus on
sequential equilibrium payoffs.

We briefly argue that their result nevertheless applies to our setting. On the
one hand, their result is still valid for games where payoffs
depend on the entire action profile.\footnote{This can be seen
from their proof or, alternatively, deduced from H\"orner et al.
(2009).} Next, it can be verified that the auxiliary game in which
stage payoffs are defined to be the \emph{expected} stage
payoffs in our game (given the action profile) has the same set of
PPE payoffs. Thus, their result provides a characterization of the
limit set of PPE payoffs for our game. On the other hand, let
$(\sigma,\tau)$ be a sequential equilibrium of our game, and
define a public strategy profile $(\bar \sigma,\bar \tau)$ as
follows. Let any public history $\bar h$ be given. At $\bar h$, we
let $\bar \sigma$ play the expectation of the mixed move played by
$\sigma$, where the expectation is computed w.r.t. the belief held
by the receiver at the information set which contains $\bar h$. We
define $\bar \tau(h) $ by exchanging the roles of the two players.
It can be verified that $(\bar\sigma,\bar \tau)$ is a public
perfect equilibrium of the repeated game.

\bigskip


Fudenberg et al. (1994) showed that $\gamma \in \dR^2$ is a limit PPE
payoff if and only if  for all $\lambda \in \dR^2$ we have $\lambda\cdot \gamma
\leq k(\lambda)$, where  $k(\lambda)$ is the solution to   a certain optimization problem
$\calP(\lambda)$.\footnote{Their result requires that a certain set have a non-empty interior, a condition that can be checked to be met here.}

We set $\gamma=(2,\frac76)$, and we will show that it is not a PPE Payoff using the condition of
Fudenberg et al. (1994) with  $\lambda_*=(0,1)$.  We now  recall Fudenberg et al. (1994)
definition of $k(\lambda_*)$, and we will show that $\lambda_*\cdot \gamma
> k(\lambda_*)$, implying that $\gamma$  is not  a limit PPE payoff.

We denote by $Z = A \times B$ the
set of public signals in our game.
The quantity $k(\lambda_*)$ is defined as  the value of the optimization problem
$\calP$:
\[\sup V^2,\]
where the supremum is taken over all $(V^1,V^2)\in \dR^2$, and all
$\phi: Z\to \dR^2$, such that
\begin{itemize}
\item $\phi^2(z)\leq 0$ for every $z\in Z$; \item $(V^1,V^2)$ is a
Nash equilibrium payoff of the one-shot game, with payoff function
defined by:
\begin{equation}\label{oneshotgame}\sum_{s\in S}m(s)\left(u(s,y(x(s)))+\phi(x(s),y(x(s)))\right),\end{equation}
for each action pair $(x,y)$.
\end{itemize}

Let $\phi:Z\to \dR^2$ be any map such that $\phi^2(z)\leq 0$ for
each $z\in Z$, and let $(\alpha,\beta)$ be any (possibly mixed)
equilibrium of the one-shot game (\ref{oneshotgame}), with payoff
$(V^1,V^2)$. We will prove that $V^2<\frac76$. We argue by
contradiction, and assume that $V^2\geq \frac76$. We distinguish
between two cases.

Assume first that $\alpha: S\to \Delta(A)$ is pooling: the
distribution of messages is the same in both states. Then, since
$\phi^2(z)\leq 0$, the expected payoff of the receiver is not
higher than
\[\max_{b\in B}\sum_{s\in S}m(s)u^2(s,b)=1.\]
Thus, $V^2\leq 1 < \frac{7}{6}$, which is the desired contradiction.

Assume next that $\alpha$ is not pooling. Up to a relabelling of
the messages, we may then assume w.l.o.g. that the sender always
tells the truth with positive probability. That is, $\alpha(s\mid
s)>0$, for each $s\in S$. We denote by $\tilde \beta(\cdot\mid s)$
the conditional distribution of the receiver's move under
$(\alpha,\beta)$, conditional on the state being $s\in S$.
Denoting by $s\neq t$ the two states, the equilibrium property for
the sender in the game (\ref{oneshotgame}) then implies that
\[\sum_{b\in B}\beta(b\mid s)\left(u^1(s,b)+\phi^1(s,b)\right)\geq \sum_{b\in B}\beta(b\mid t)\left(u^1(s,b)+\phi^1(t,b)\right),\]
with equality if $\alpha(\cdot\mid s)$ assigns positive
probability to both messages, and
\[\sum_{b\in B}\beta(b\mid t)\left(u^1(t,b)+\phi^1(t,b)\right)\geq \sum_{b\in B}\beta(b\mid s)\left(u^1(t,b)+\phi^1(s,b)\right).\]
Using the two inequalities, one can verify that
\[u^1(s,\tilde\beta(\cdot\mid s))+u^1(t,\tilde \beta(\cdot\mid t))\geq u^1(s,\tilde\beta(\cdot\mid t))+ u^1(t,\tilde\beta(\cdot\mid s)).\]
By Lemma \ref{lemmbasic}, condition \textbf{C2} therefore holds for the stationary strategy $\tilde\beta$.

On the other hand, since $\phi^2(z)\leq 0$ for each $z$, the
expected payoff $V^2$ to the receiver does not exceed
$U^2(\mu_0,\tilde\beta)$. Hence,
$U^2(\mu_0,\tilde\beta)\geq\frac76$. This readily implies that
$U(\mu_0,\tilde\beta)\in E(\calM)$.

Next, one can verify that the highest payoff
$U^2(\mu_0,\tilde\beta)$ to the receiver, over the whole set
$U(\mu_0,\tilde\beta)\in E(\calM)$, is equal to $\frac76$. In
addition, the unique strategy $\tilde\beta$ that achieves such a
payoff is the strategy $y_*$. Since the  supports of $y_*(\cdot \mid L)$ and
$y_*(\cdot\mid R)$ are distinct, it must therefore be that
$\alpha$ is truth-telling: $\alpha(s\mid s)=1$ for each $s$.
Therefore, $\beta$ is equal to $ y_*$.

Since $V^2\geq \frac76$ and $V^2\leq U^2(\mu_0,y_*)$, one also has
$V^2=U^2(\mu_0,y_*)$. In particular, the expectation of
$\phi^2(z)$ under the equilibrium profile $(\alpha,\beta)$ must be
equal to zero. Since $\phi^2(z)\leq 0$ for each $z$, this implies
that $\phi^2(z)=0$, for each public signal $z$ that receives
positive probability under $(\alpha,\beta)$.

Using this, we finally claim that the equilibrium condition for
the receiver in the game (\ref{oneshotgame}) is violated. Indeed,
when told $L$, the strategy $\beta=y_*$ assigns positive
probability to both $l$ and $m$. Hence,
$\phi^2(L,l)=\phi^2(L,m)=0$ by the previous paragraph. On the
other hand however, $u^2(L,m)>u^2(L,l)$, hence the receiver is not
indifferent between both actions. This is the desired
contradiction.

\subsection{Imperfect monitoring}

Let us assume here that successive states are independent. Results
continue to hold if the receiver only observes a noisy,
\emph{public} version of the sender's message (provided the
definition of $U(\mu,y)$ is modified in an appropriate way). They
still hold if the receiver observes a noisy, \emph{public} signal
of the current state, provided the individual rationality level
$v^2$ is modified in the proper way. They also hold, without
changes, if the sender only observes a noisy, public signal of the
receiver's action. What happens in any of these variants when
signals are private is beyond the scope of the paper.

\bigskip

We briefly conclude this section by discussing the case where the sender fails to receive any information relative to the receiver's choices.
In spite of this feature, the game does not reduce to a sequence of successive, independent, one-shot games,
because of the ability of the receiver to monitor the sender. In particular,
it is easy to construct examples with equilibrium payoffs that lie outside of the convex hull of the set of equilibrium payoffs in the one-shot game.

We refer to the game where the sender does not observe the actions
of the receiver as to the {\it blind} game. Denote by
$NE^b_\delta$ the set of all Nash  equilibrium payoffs  of the
blind game.  We prove that the value of monitoring is positive, in
the sense that allowing the sender to monitor the receiver has a
non-ambiguous effect on the equilibrium set.

\begin{proposition}\label{prop subset}
The set $NE^b_\delta$ is a subset of $NE_\delta$.
\end{proposition}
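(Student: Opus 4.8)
The plan is to take any Nash equilibrium of the blind game, modify the receiver's strategy in an innocuous way, and verify that the result is a Nash equilibrium of the monitored game with the same payoff. So fix $\gamma\in NE^b_\delta$ and a Nash equilibrium $(\sigma^b,\tau^b)$ of the blind game with $\gamma_\delta(\sigma^b,\tau^b)=\gamma$. The first — and key — step is the observation that, because in the blind game the sender's strategy $\sigma^b$ does not depend on the receiver's past actions, the sequence of messages $(a_1,\ldots,a_n)$ generated under $(\sigma^b,\cdot)$ has a law that does not depend on the receiver's strategy, and, conditionally on $(a_1,\ldots,a_n)$, the action $b_n$ produced by any receiver strategy $\tau'$ is independent of the states. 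It follows that the payoff $\gamma_\delta(\sigma^b,\tau')$ of either player depends on $\tau'$ only through the \emph{reduced} maps $\bar\tau'_n(\cdot\mid a_1,\ldots,a_n)\in\Delta(B)$, the marginal law of $b_n$ given the messages. Hence one may replace $\tau^b$ by the receiver strategy $\tilde\tau$ which, after having heard $(a_1,\ldots,a_n)$, draws $b_n$ from $\bar\tau^b_n(\cdot\mid a_1,\ldots,a_n)$ using a fresh randomization, independent of everything that happened before stage $n$: $(\sigma^b,\tilde\tau)$ is still a Nash equilibrium of the blind game, with payoff $\gamma$, and now the receiver's current and future actions never depend on his own past actions.

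I would then view $(\sigma^b,\tilde\tau)$ as a profile of the monitored game (with $\sigma^b$ ignoring the action coordinates) and check it is a Nash equilibrium there, still with payoff $\gamma$. The payoff claim and the receiver's incentive constraint are immediate: since $\sigma^b$ ignores actions, for any receiver strategy $\tau'$ the profile $(\sigma^b,\tau')$ induces in the monitored game exactly the same distribution over $(s_n,a_n,b_n)_n$ as it does in the blind game, so the receiver gains nothing by deviating, and $(\sigma^b,\tilde\tau)$ yields $\gamma$.

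The substantive point is the sender's incentive constraint in the monitored game, where the sender now observes $b_1,\ldots,b_{n-1}$. The plan is to show this extra observation is worthless against $\tilde\tau$. Given an arbitrary monitored sender strategy $\sigma'$, let $\widehat\sigma'$ be the blind (behavior) strategy in which the sender internally simulates fictitious actions $\tilde b_k\sim\bar\tau^b_k(\cdot\mid a_1,\ldots,a_k)$ with fresh coins and plays $\sigma'$ with $\tilde b_k$ in place of $b_k$. A short coupling argument then gives $\gamma_\delta(\sigma',\tilde\tau)=\gamma_\delta(\widehat\sigma',\tilde\tau)$: the processes $(s_n,a_n)_n$ coincide in law under the two profiles (they are generated by the same recursion, with $\tilde b$ in the role of $b$), and, conditionally on $(s_1,\ldots,s_n,a_1,\ldots,a_n)$, the realized action $b_n$ is in both cases simply a fresh $\bar\tau^b_n(\cdot\mid a_1,\ldots,a_n)$-draw — this is where it matters that $\tilde\tau$ uses a fresh randomization at stage $n$, independent of the messages already chosen. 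Since $\sigma^b$ is a best reply to $\tilde\tau$ in the blind game, we then get $\gamma^1_\delta(\sigma',\tilde\tau)=\gamma^1_\delta(\widehat\sigma',\tilde\tau)\leq\gamma^1_\delta(\sigma^b,\tilde\tau)=\gamma^1$ for every $\sigma'$, so $\sigma^b$ is also a best reply to $\tilde\tau$ in the monitored game. Thus $(\sigma^b,\tilde\tau)$ is a Nash equilibrium of the monitored game with payoff $\gamma$, i.e. $\gamma\in NE_\delta$.

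The main obstacle is the coupling step, and in particular the need to pass to $\tilde\tau$ beforehand: for a general $\tau^b$ whose stage-$n$ action may depend on the realized past actions $b_1,\ldots,b_{n-1}$, the naive simulation fails, because the monitored sender can genuinely exploit the observed past actions to predict how the receiver will break ties in the future. What makes the replacement of $\tau^b$ by the action-oblivious, fresh-coin strategy $\tilde\tau$ legitimate is exactly the equilibrium structure: against $\sigma^b$ the receiver's actions are payoff-irrelevant for his own continuation, so he is indifferent among the actions he randomizes over, and only the reduced maps $\bar\tau^b_n$ carry any strategic content.
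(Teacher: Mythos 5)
Your proof is correct and takes essentially the same route as the paper: both replace the receiver's equilibrium strategy by the action-oblivious strategy that draws $b_n$ afresh from the conditional law of the receiver's stage-$n$ action given the announcements, check that this preserves the blind-game equilibrium, and then argue that the monitored sender cannot exploit the observed actions because, given the messages, they are conditionally independent of everything payoff-relevant. Your explicit simulation/coupling step via $\widehat\sigma'$ just spells out what the paper states tersely as ``any profitable deviation against $\tau'$ in the non-blind game is also profitable in the blind game.''
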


\begin{proof}
Let $(\sigma,\tau)$ be a Nash equilibrium of the blind game.
Define $\tau'$ to be the following strategy that depends only on
the sender's announcements, and not on the receiver's past
actions: after a sequence $(a_1,\ldots, a_n)$ of announcements,
$\tau'$ plays any action $b \in B$ with the probability that the
$n$-th action of the receiver according to $\tau$ is $b$,
conditional on the sender's announcements being $(a_1,\ldots,
a_n)$:
\[ \tau'(a_1,\ldots, a_n)[b] = \E\left[\tau(a_1,b_1,a_2,b_2,\ldots, b_{n-1},a_n)[b] \mid a_1,a_2,\ldots,a_n\right]. \]
 In words,
$\tau'$ gets rid of the possible correlation between successive actions of the receiver, that may exist in the strategy $\tau$.

We claim that the strategy profile $(\sigma,\tau')$ is a Nash equilibrium of the blind game.
Indeed, $\tau'$ is a best-reply to $\sigma$ because it induces the same payoff as $\tau$.
$\sigma$ is a best-reply to
$\tau'$ because any strategy of the sender in the \emph{blind} game induces the same expected payoff against $\tau$ or $\tau'$.

We next claim that the strategy profile $(\sigma,\tau')$ is a Nash equilibrium of the non-blind game.
Indeed, because under $\sigma$, the sender does not condition his play on past actions of the receiver,
and because $\tau'$ is a best response to $\sigma$ in the blind game,
it follows that $\tau'$ is a best response to $\sigma$ in the non-blind game as well.
Because the receiver's actions are conditionally independent, given the sender's announcements,
any profitable deviation against $\tau'$ in the non-blind game is also profitable in the blind game.
\end{proof}

The inclusion is strict in general, as Example \ref{example-strict-inclusion} below shows.

\begin{example}\label{example-strict-inclusion}
There are two states $S = \{L,R\}$, and three actions for the
receiver, $B=\{l,m,r\}$. The payoffs in the two states are given
in Figure \arabic{figurecounter}.

\centerline{
\begin{picture}(90,60)(-10,-20)
\put( 20,27){$l$}
\put( 60,27){$m$}
\put(100,27){$r$}
\put( 0,0){\numbercellong{$2,2$}}
\put(40,0){\numbercellong{$0,0$}}
\put(80,0){\numbercellong{$0,0$}}
\put(50,-15){State $L$}
\end{picture}
\ \ \ \ \ \ \ \ \ \ \ \ \ \ \ \ \ \ \ \
\begin{picture}(90,60)(-10,-20)
\put( 20,27){$l$}
\put( 60,27){$m$}
\put(100,27){$r$}
\put( 0,0){\numbercellong{$0,0$}}
\put(40,0){\numbercellong{$2,2$}}
\put(80,0){\numbercellong{$0,3$}}
\put(50,-15){State $R$}
\end{picture}
} \centerline{Figure \arabic{figurecounter}: The game in Example
\ref{example-strict-inclusion}.}\end{example}
\addtocounter{figurecounter}{1}

We claim that $(2,2)$ is an equilibrium payoff when the sender
observes the actions of the receiver, but it is no longer an
equilibrium payoff when the sender does not observe the receiver's
actions.

Note first that $v^2=\frac{3}{2}$, and that $\widehat E(\calM)
\neq \emptyset$. By Theorem \ref{theorem1}, $(2,2) \in E(\calM)$,
so that $(2,2) \in \lim \inf_{\delta \to 1} SE_\delta \subseteq \lim \inf_{\delta \to 1} NE_\delta$.

We now argue that $(2,2)$ is bounded away from the set
$NE_\delta^b$. Indeed, assume to the contrary that there is some
equilibrium profile $(\sigma,\tau)$ of the blind game with a
payoff close to $(2,2)$. In particular, with a probability close
to one, there is a positive fraction of the stages in which the
current state is $R$ and the receiver plays $m$. Consider the
strategy $\tau'$ which plays as $\tau$, except that $\tau'$ plays
$r$ whenever $\tau$ would play $m$. Because the sender does not
observe the receiver's actions, he cannot tell whether the
receiver uses $\tau$ or $\tau'$, and therefore $\tau'$ is a
profitable deviation of the receiver: it yields the receiver
payoff close to $2\frac12$. \myendexample

\subsection{Relation to the one-shot game}

The characterization implies that every equilibrium payoff of the one-shot game remains an equilibrium payoff in the dynamic game, provided players are patient enough.
This property is not obvious \textit{a priori}, since the game is not a repeated game. In particular, it would typically fail to hold if the state
were constant throughout the play.

Let $(\sigma,\tau)$ be an equilibrium of the one-shot game. Let
$y:A\to \Delta(B)$ be the stationary strategy defined as
\[ y(b\mid a) = \sum_{s \in S} \sigma(a \mid s)\tau(a)[b]. \]
Note that the expected payoff under $(\sigma,\tau)$ is
$U(\mu_0,y)$. We claim that $U(\mu_0,y) \in E(\calM)$, so that by
Theorem \ref{theorem1} it is a sequential equilibrium payoff in
the repeated game. Indeed, because the receiver can guarantee
$v^2$ in the one-shot game, condition \textbf{C2} holds. Because
$\sigma$ is a best reply to $\tau$ in the one-shot game, the
inequality in \textbf{C1} holds for every $\mu$, and in particular
for every $\mu \in \calM$.

This result has the implication that the lowest equilibrium payoff of the sender in the repeated game
cannot be higher than his lowest equilibrium payoff in the one shot game. As the example in Section \ref{example2.1} shows, it can in fact be   strictly lower.

On the other hand, the lowest equilibrium payoff of the receiver
in both the one-shot game and the repeated game is equal to his
babbling equilibrium payoff $v^2$.

\renewcommand{\baselinestretch}{1.0}

\begin{appendix}

\begin{center}
\Large{\textbf{Appendix}}
\end{center}


\section{Proof of Lemma \ref{lemmbasic}}

To prove Lemma \ref{lemmbasic} we need the following description
of $\calM$, which is of independent interest.

A \emph{permutation matrix} is a (square) matrix with entries in $\{0,1\}$, such that each row and each column contains
exactly one entry equal to 1. We denote by $\Phi$ the set of $S \times S$ permutation matrices,
and by $I$ the matrix that corresponds to the identity permutation.
\begin{lemma} \label{lemma111}
The set $\calM(m)$ is equal to
$${\cal M}(m)= \left(\mu_0 -I + \mbox{co}\; \Phi \right) \cap \dR_+^{S\times S}.$$
\end{lemma}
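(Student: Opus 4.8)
The plan is to exhibit the affine bijection $T\colon \mu\mapsto \mu-\mu_0+I$ between $\calM(m)$ and the set of doubly stochastic $S\times S$ matrices lying in $T(\dR_+^{S\times S})$, and then invoke the Birkhoff--von Neumann theorem, according to which $\mathrm{co}\,\Phi$ is precisely the set of doubly stochastic matrices. Everything else is elementary bookkeeping with row and column sums.

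First I would record the two basic facts about $T$. For any matrix $\mu$, since $I$ has all row and column sums equal to $1$ and $\mu_0=\mathrm{diag}(m)$, the row sum of $T\mu$ over $a$ equals $(\sum_a\mu(s,a))+1-m(s)$ and the column sum over $s$ equals $(\sum_s\mu(s,a))+1-m(a)$. Hence $\mu$ has both marginals equal to $m$ if and only if $T\mu$ has all row and all column sums equal to $1$. Moreover the off-diagonal entries of $T\mu$ coincide with those of $\mu$, while $(T\mu)(s,s)=\mu(s,s)+1-m(s)$.

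For the inclusion $\calM(m)\subseteq(\mu_0-I+\mathrm{co}\,\Phi)\cap\dR_+^{S\times S}$, take $\mu\in\calM(m)$. Being a probability distribution, $\mu\in\dR_+^{S\times S}$. By the previous paragraph $T\mu$ has all row and column sums equal to $1$; and $T\mu\geq 0$, because its off-diagonal entries are those of $\mu\geq0$ and its diagonal entries are $\mu(s,s)+1-m(s)\geq 1-m(s)\geq 0$ (using $m(s)\leq 1$). Thus $T\mu$ is doubly stochastic, so $T\mu\in\mathrm{co}\,\Phi$ by Birkhoff--von Neumann, and $\mu=\mu_0-I+T\mu\in \mu_0-I+\mathrm{co}\,\Phi$. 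Conversely, take $\mu=\mu_0-I+D$ with $D\in\mathrm{co}\,\Phi$ and $\mu\geq 0$. Then $D=T\mu$ is doubly stochastic, so its row and column sums are all $1$, whence the computation above shows $\mu$ has both marginals equal to $m$; combined with $\mu\geq0$ this gives $\mu\in\calM(m)$.

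I do not expect a real obstacle: the substance is the Birkhoff--von Neumann theorem together with the marginal/row-sum identity for $T$. The one point requiring care is that the intersection with $\dR_+^{S\times S}$ in the statement is genuinely necessary and not automatic — for an invariant measure with a small coordinate $m(s)$, a doubly stochastic $D$ with $D(s,s)=0$ produces $\mu(s,s)=m(s)-1<0$, so $\mu_0-I+\mathrm{co}\,\Phi$ really does stick out of the positive orthant — and the argument must keep the nonnegativity constraint explicit throughout rather than try to absorb it into the affine description.
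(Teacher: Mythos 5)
Your proof is correct and follows essentially the same route as the paper: both shift $\mu$ by $I-\mu_0$ to obtain a doubly stochastic matrix and then invoke the Birkhoff--von Neumann theorem. Your version just spells out the row/column-sum and nonnegativity checks that the paper leaves implicit.
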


\begin{proof}
The inclusion $\supseteq$ is clear. We prove the reverse inclusion.
Take $\mu$ in ${\cal M}(m)$, and define the matrix $J:=\mu+ I-\mu_0$ in $\dR^{S\times S}$.
$J$ is a bistochastic matrix, hence it is a convex combination of permutation matrices. Since $\mu=J-I+\mu_0$, the result follows.
\end{proof}

\bigskip

\noindent
\begin{proof}[Proof of Lemma \ref{lemmbasic}]
We only prove that \textbf{C1} is equivalent to \textbf{C'1}. For
every permutation $\phi$ over $S$ denote by $\mu^\phi \in S \times
S$ the matrix where the entry $(s,t)$ is equal to 1 if
$t=\phi(s)$, and is 0 otherwise. Note that $U^1(I,y) = \sum_{s \in
S}u^1(s,y(\cdot\mid s))$, and $U^1(\mu^\phi,y) = \sum_{s \in
S}u^1(s,y(\cdot\mid \phi(s)))$.

Assume first that \textbf{C'1} holds, and let $\mu \in \calM(m)$.
By Lemma \ref{lemma111}, $\mu$ can be written $\mu=\mu_0 -I+ \sum_{\phi} \alpha_\phi P^\phi$,
where the $\alpha_{\phi}$ are non negative real numbers that  sum to one.
Because $U^1$ is linear in $\mu$,
\[ U^1(\mu,y) = U^1(\mu_0,y) - U^1(I,y) + \sum_{\phi} \alpha_\phi U^1(\mu^\phi,y). \]
By \textbf{C'1}, $U^1(I,y) \geq U^1(\mu^\phi,y)$ for every
permutation $\phi$, and therefore $U^1(I,y) \geq \sum_{\phi}
\alpha_\phi U^1(\mu^\phi,y)$. It follows that $U^1(\mu_0,y) \geq
U^1(\mu,y)$. Because this inequality holds for every $\mu \in
\calM(\mu)$, \textbf{C1} holds.

Assume now that \textbf{C1} holds. Fix a permutation $\phi$, and
define $\mu_\ep=\mu_0- \varepsilon I + \varepsilon \mu^\phi$,
where $\ep > 0$. Because $m$ has full support, one has $\mu_\ep
\in \calM(m)$ provided $\ep$ is sufficiently small. Now, by
\textbf{C1}, for each such $\ep$,
\[ U^1(\mu_0,y) \geq U^1(\mu_\ep,y) = U^1(\mu_0,y) - \ep U^1(I,y) + \ep U^1(\mu^\phi,y). \]
It follows that $U^1(\mu^\phi,y) \leq \ep U^1(I,y)$.
As this inequality holds for every permutation $\phi$, \textbf{C'1} holds.
\end{proof}

\section{Complements to the proof of Theorem \ref{theorem1}}

The proof of Theorem \ref{theorem1} given in the text is almost complete. For completeness, we provide below the  proofs of  Proposition \ref{prop eq} and of Lemma \ref{lemm2}, which are missing. 

We start by  addressing the issue of designing a system of beliefs for the receiver that is consistent with $\sigma_*$, and that satisfies an additional  property.
Since the game involves randomizing devices with uncountably many outcomes, the standard definition of consistency does not apply.
We denote by $\lambda\in {\Delta}(S)$ a distribution with full support and, for $\eta<0$, we denote by $\sigma_\eta$ the strategy that,
following any history $h_n$, plays $\eta \lambda +(1-\eta)\sigma_*(h_n)$.

One can check that, for $\eta>0$, the beliefs of the receiver are uniquely defined by Bayes rule, and have a limit when  $\eta=0$.%
\footnote{And  the convergence is uniform w.r.t. the receiver's information set.}
Note that, following any history that is inconsistent with $\tau_0$,
the belief of the receiver in stage $n$ is independent of $\textbf{t}_n$.%
\footnote{That is, should the sender fail to play the babbling announcement $\bar a$, the receiver sill interprets the sender's announcements as babbling.}

We denote by  $\tau_*$ a strategy that coincides with $\tau_0$ as
long as the sender does not deviate, and that plays in each later
stage $n$ an action that (i) maximizes the current expected payoff
of the receiver, given the belief held by the receiver in stage
$n$, and (ii) does not depend on the announcements made by the
sender since the deviation took place.

By construction, the strategy $\sigma_*$ is sequentially rational at each information set of the sender,
while the strategy $\tau_*$ is sequentially rational at each information set of the receiver that is inconsistent with $\tau_0$.

\subsection{Proof of Proposition \ref{prop eq}}

Assume w.l.o.g. that all payoffs belong to the interval $[0,1]$. Define by $\sigma_{truth}$ the strategy
of the sender that announces truthfully the current state $\textbf{s}_n$ in each stage $n\leq N$, and by $\tau_{truth}$ the strategy of the receiver that
plays $y(\textbf{t}_n)$ in each stage $n\leq N$. Thus,  $\tau_{truth}$ coincides with $\tau_0$ until stage $q$.

Let $\eta$ be given, and set $\xi = \frac{\eta}{|S|+2}$. For every
state $s \in S$ and every $n \in \dN$, denote by $F_n(s)$ the
empirical frequency of visits to $s$ up to (and including) stage
$n$. Since the Markov chain is aperiodic, by the ergodic theorem
there is $N_0$ such that with probability at least $1-\xi$,
$F_{(1-\xi)N}(s) \leq m_N(S)$ for every state $s \in S$, as soon as
$N \geq N_0$. It follows that $\tau_0$ coincides with
$\tau_{truth}$ in the first $(1-\xi)N$ stages, so that with
probability at least $1-\xi$,
\[ \|\mu_{\sigma_{truth},\tau_0} - \mu_0\|_1 \leq |S|\xi. \]
This implies that
\[ U^1(\mu_{\sigma_{truth},\tau_0},y) > U^1(\mu_0,y) - (|S|+1)\xi. \]
For fixed $N$, as $\delta$ converges to 1, the discounted payoff
in each block converges to the average payoff in that block, and
therefore for $\delta$ sufficiently large
\[ \gamma^1_\delta(\sigma_{truth},\tau_0) > U^1(\mu_0,y) - (|S|+2)\xi. \]
Because $\sigma_0$ is a best reply to $\tau_0$, we deduce that
\[ \gamma^1_\delta(\sigma_0,\tau_0) \geq \gamma^1_\delta(\sigma_{truth},\tau_0) > U^1(\mu_0,y) - (|S|+2)\xi = U^1(\mu_0,y) - \eta. \]
We again use the fact that, for fixed $N$, as $\delta$ goes to 1,
the payoff $\gamma_\delta(\sigma_0,\tau_0)$ converges to
$U(\mu_{\sigma_0,\tau_0},y)$ to deduce that
\begin{equation}
\label{equ5.4}
U^1(\mu_{\sigma_0,\tau_0},y)>  U^1(\mu_0,y) -\eta.
\end{equation}

For fixed $N$, and for every $\delta$, the marginal distributions of $\mu_{\sigma_0,\tau_0}\in \Delta(S\times A)$
on  $S$ and $A$ are respectively equal to $m$ and to $m_N$.

Since the approximation $m^N$ converges to $m$ as $N\to +\infty$, the distribution $\mu_{\sigma_0,\tau_0}$ converges to the
set $\calM$ of copulas. Using Lemma \ref{lemm2}, Proposition \ref{prop eq} therefore follows from (\ref{equ5.4}).

\subsection{Proof of Lemma \ref{lemm2}}

Let a 
copula $\mu\in \calM$ be given.   Present $\mu$ as a
convex combination of the extreme points $(\mu_e)_e$ of $\calM$:
$\displaystyle\mu=\sum_{\mu_e\in \calM_e}\alpha_e \mu_e$, with
$\alpha_e\geq 0$ and $\displaystyle \sum_{\mu_e\in
\calM_e}\alpha_e=1$. Recall that $\mu_0$ is one of the extreme
points of $\calM$.

On the one hand, since $U^1$ is bi-linear, one has
\begin{eqnarray}
\nonumber
U^1(\mu_0,y)-U^1(\mu,y)&= &
\ep U^1(\mu_0,y_0)+(1-\ep)U^1(\mu_0,y_1)-\ep U^1(\mu,y_0)-(1-\ep)U^1(\mu, y_1)\\
\label{equ019}
&\geq& \ep\left(U^1(\mu_0,y_0)-U^1(\mu,y_0)\right)\\
&=& \ep\left((1-\alpha_{0})U^1(\mu_0,y_0)-\sum_{\mu_e\neq \mu_0}\alpha_e U^1(\mu_e,y_0)\right) \\
&\geq & \ep(1-\alpha_{0}) c_1,\label{eq10}
\end{eqnarray}
where the inequality (\ref{equ019}) holds because $y_1 \in
Y(\calM)$ and by \textbf{C1}.

On the other hand, one has
$\mu-\mu_0= \sum_{\mu_e\in \calM_e}\alpha_e (\mu_e-\mu_0)$,
hence
\begin{equation}
\label{eq11}
\|\mu-\mu_0\|_1\leq  c_2\sum_{\mu_e\in \calM_e,\mu_e\neq \mu_0}\alpha_e=c_2(1-\alpha_{0}).
\end{equation}
The result follows from (\ref{eq10}) and (\ref{eq11}).

\section{Complements to the proof of Theorem \ref{theorem2}}

We here prove Lemma \ref{lemm4ter}.  For clarity, we introduce yet another copy $T$ of the set $S$. 
Intuitively, fictituous states are $T$-valued, while realized ones are $S$-valued.

Define $\calM'\subseteq \Delta (S\times T)$ to be the set of distributions $\mu\in  \calM$ such that the following property \textbf{P}
holds:
\begin{description}
\item[Property P.] For every $(s,t)\in S\times T$, one has
\begin{equation}
\label{equ5} \sum_{s'\in S}\mu(s'\mid t)p(s\mid s')=\sum_{t'\in
T}\mu(s\mid t')p(t'\mid t).
\end{equation}
\end{description}

We will prove 
\begin{lemma}\label{lemm4bis}
Under \textbf{Assumption A}, the set $\calM'$ coincides with the set $\calM$.
\end{lemma}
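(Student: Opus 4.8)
The plan is to show the two inclusions $\calM' \subseteq \calM$ (which is immediate by definition, since $\calM'$ is defined as a subset of $\calM$) and $\calM \subseteq \calM'$, the latter being the substantive content. So I would fix an arbitrary copula $\mu \in \calM$ and verify that \textbf{Property P} holds, i.e. that for every $(s,t) \in S \times T$,
\[
\sum_{s' \in S} \mu(s' \mid t) p(s \mid s') = \sum_{t' \in T} \mu(s \mid t') p(t' \mid t).
\]

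First I would rewrite both sides using \textbf{Assumption A}, splitting off the diagonal terms. On the left-hand side, $p(s \mid s') = \alpha_s$ whenever $s' \neq s$, and $p(s \mid s) = 1 - \sum_{s'' \neq s} \alpha_{s''}$, so
\[
\sum_{s'} \mu(s' \mid t) p(s \mid s') = \alpha_s \sum_{s' \neq s} \mu(s' \mid t) + \mu(s \mid t)\Bigl(1 - \sum_{s'' \neq s}\alpha_{s''}\Bigr) = \alpha_s\bigl(1 - \mu(s \mid t)\bigr) + \mu(s\mid t)\Bigl(1 - \sum_{s'' \neq s}\alpha_{s''}\Bigr),
\]
where I used $\sum_{s'} \mu(s' \mid t) = 1$. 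The same computation on the right-hand side, using $p(t' \mid t) = \alpha_{t'}$ for $t' \neq t$ and $\sum_{t'}\mu(s \mid t') p(t' \mid t)$, must be handled with care: here the summation variable $t'$ ranges over $T$ while $s$ is held fixed in the \emph{first} coordinate, so I should instead think of the conditional probabilities $\mu(s \mid t')$ and use that $\sum_{t'} \mu(t') \mu(s \mid t') = \mu(s) = m(s)$ — but since the $p(t'\mid t)$ are not the weights $\mu(t')$, this needs the marginal structure. The cleaner route is to multiply through by $m(t)$ and work with joint probabilities $\mu(s',t) = \mu(s' \mid t)m(t)$ throughout, turning \eqref{equ5} into a statement purely about the matrix $\mu$ and the transition matrix $p$; then both sides become linear in $\mu$, and by Lemma~\ref{lemma111} it suffices to check the identity when $\mu$ is an extreme point of $\calM$, i.e. $\mu = \mu_0 - I + P$ for a permutation matrix $P$.

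So the key reduction is: by linearity of \eqref{equ5} in $\mu$ (after clearing denominators) and by Lemma~\ref{lemma111}, it is enough to verify \textbf{Property P} for $\mu = \mu_0 - I + P$, $P \in \Phi$. For such $\mu$, $\mu(s' \mid t)$ is supported on $s' = \phi(t)$ (where $\phi$ is the permutation of $P$), and the left-hand side of \eqref{equ5} becomes $p(s \mid \phi(t))$, while the right-hand side becomes $p(\phi^{-1}(s) \mid t)$. Using \textbf{Assumption A}: if $s = \phi(t)$, both sides equal $p(s \mid s) = 1 - \sum_{s'' \neq s}\alpha_{s''}$; if $s \neq \phi(t)$ (equivalently $\phi^{-1}(s) \neq t$), the left side is $p(s \mid \phi(t)) = \alpha_s$ and the right side is $p(\phi^{-1}(s) \mid t) = \alpha_{\phi^{-1}(s)}$ — and here is the subtle point: these are \emph{not} obviously equal. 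This is where I expect the real obstacle to be, and I suspect one must not reduce to a single extreme point naively but rather exploit that \eqref{equ5} is required to hold as an identity that only the full copula $\mu$ (with its correct marginals $m$ on both sides) satisfies; the per-extreme-point check fails, but the convex combination — weighted so that the marginals come out right — restores it. Concretely, after multiplying by $m(t)$, the left side of \eqref{equ5} is $\sum_{s'} \mu(s', t) p(s \mid s')$ and summing the Assumption-A expansion gives $\alpha_s(m(t) - \mu(s,t)) + \mu(s,t)p(s\mid s)$, whereas the right side is $\sum_{t'}\mu(s,t')p(t'\mid t) = \alpha_t\bigl(\sum_{t' \neq t}\mu(s,t')\bigr) + \mu(s,t)p(t\mid t)$; I would then use $\sum_{t'}\mu(s,t') = m(s)$ and $p(s\mid s) = p(t\mid t)$-type identities together with the defining relation $\alpha_s(m(t) - \mu(s,t)) = \alpha_t(m(s)-\mu(s,t))$, which holds precisely because... — and checking that last identity, which should follow from $\alpha_s m(t) = \alpha_t m(s)$ being false in general but $\mu(s,t)$ absorbing the discrepancy via the copula marginal constraints, is the crux. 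I would organize the final proof around establishing this algebraic identity directly from the marginal conditions $\sum_s \mu(s,t) = m(t)$, $\sum_t \mu(s,t) = m(s)$ and \textbf{Assumption A}, without passing through extreme points if that turns out to be cleaner.
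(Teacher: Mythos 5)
Your overall plan---verify (\ref{equ5}) directly for an arbitrary $\mu\in\calM$ from \textbf{Assumption A} and the two marginal constraints---is the paper's plan, and your expansion of the left-hand side is correct: it equals $\alpha_s+\mu(s\mid t)(1-C)$, where $C=\sum_{s''\in S}\alpha_{s''}$. But the argument does not close, and the missing ingredient is concrete: under \textbf{Assumption A} the invariant measure satisfies $m(s)=\alpha_s/C$, i.e.\ $\alpha_s=C\,m(s)$. (You explicitly doubt the relation $\alpha_s m(t)=\alpha_t m(s)$; it is in fact true, precisely because $m$ is proportional to $\alpha$.) With this fact the computation finishes in one line. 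Expanding the right-hand side of (\ref{equ5}) as $\mu(s\mid t)(1-C+\alpha_t)+\sum_{t'\neq t}\alpha_{t'}\,\mu(s\mid t')$---note that the correct coefficient is $\alpha_{t'}$ \emph{inside} the sum, not $\alpha_t$ pulled outside as in your expansion---the difference of the two sides collapses to $\alpha_s-\sum_{t'\in T}\alpha_{t'}\,\mu(s\mid t')=\alpha_s-C\sum_{t'\in T}m(t')\,\mu(s\mid t')=\alpha_s-C\,m(s)=0$, using the row-marginal condition $\sum_{t'}\mu(s,t')=m(s)$. Your proposed pivot identity $\alpha_s(m(t)-\mu(s,t))=\alpha_t(m(s)-\mu(s,t))$ is false (given $\alpha_s m(t)=\alpha_t m(s)$ it would force $\alpha_s=\alpha_t$ whenever $\mu(s,t)>0$) and is not what is needed.

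The excursion through Lemma \ref{lemma111} is a dead end and should be dropped. The matrices $\mu_0-I+P$ are signed and are not the extreme points of $\calM$; moreover your per-point check (left side $p(s\mid\phi(t))$, right side $p(\phi^{-1}(s)\mid t)$) silently substitutes the permutation matrix $P$ for $\mu_0-I+P$, and a (normalized) permutation matrix does not even have both marginals equal to $m$, so the apparent failure you observe there is an artifact of computing with the wrong object. Since the direct computation above uses nothing beyond \textbf{Assumption A}, the identity $m=\alpha/C$, and the two marginal identities of the copula, no decomposition into extreme points is needed.
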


\begin{lemma}\label{lemm4}
Let $\mu\in \calM'$ be given. There exists an  $S$-valued process%
\footnote{The process $(\textbf{t}_n)_n$ is possibly defined on a probability space which
is an enlargement of the one on which $(\textbf{s}_n)_n$ is defined.}
$(\textbf{t}_n)_n$,
such that:
\begin{description}
\item[P1] The law of the sequence $(\textbf{t}_n)_n$ is the same
as the law of the sequence $(\textbf{s}_n)_n$. \item[P2] The law
of the pair $(\textbf{s}_n,\textbf{t}_n)$ is $\mu$, for each stage
$n\in \dN$. \item[P3] The conditional law of $\textbf{s}_n$, given
$\textbf{t}_1,\ldots, \textbf{t} _n$ is $\mu(\cdot \mid
\textbf{t}_n)$. \item[P4] Conditional on $\textbf{s}_n$, the
vector $(\textbf{t}_1,\ldots, \textbf{t}_n)$ is independent of the
future states $(\textbf{s}_{n+1},\textbf{s}_{n+2},\ldots)$.
\end{description}
\end{lemma}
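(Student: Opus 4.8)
The plan is to realize the pair process $(\textbf{s}_n,\textbf{t}_n)_n$ as a single Markov chain on $S\times T$, started from $\mu$, whose one‑step transition updates the two coordinates in turn. From a state $(s,t)$ the chain first draws the new real state $s'$ according to $p(\cdot\mid s)$, and then draws the new fictitious state $t'$ according to the kernel
\[
K(t'\mid t,s'):=\frac{\mu(s'\mid t')\,p(t'\mid t)}{\nu_t(s')},\qquad \nu_t(s'):=\sum_{u\in S}\mu(u\mid t)\,p(s'\mid u),
\]
with $K(\cdot\mid t,s')$ defined arbitrarily when $\nu_t(s')=0$ --- a case that never occurs along the chain, since visiting $(s,t)$ forces $\mu(s,t)>0$, hence $\mu(s\mid t)>0$, hence $\nu_t(s')\geq\mu(s\mid t)\,p(s'\mid s)>0$ for every $s'$ with $p(s'\mid s)>0$. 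The whole construction hinges on one observation: $K(\cdot\mid t,s')$ is a probability vector on $T$ precisely because of Property \textbf{P}, which says exactly that $\sum_{t'\in T}\mu(s'\mid t')\,p(t'\mid t)=\nu_t(s')$. Since the given chain $(\textbf{s}_n)_n$ and the $S$‑marginal of this joint chain are both Markov with transition $p$ and initial law $m$, one may (using the enlargement allowed in the statement) take $(\textbf{s}_n)_n$ to be that $S$‑marginal and define $(\textbf{t}_n)_n$ as the $T$‑marginal; in particular the $\textbf{s}$‑coordinate evolves autonomously, a fact that will give \textbf{P4}.

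I would then prove \textbf{P1}, \textbf{P2} and \textbf{P3} at once, by induction on $n$, establishing: (i) $(\textbf{s}_n,\textbf{t}_n)\sim\mu$; (ii) the conditional law of $\textbf{s}_n$ given $(\textbf{t}_1,\dots,\textbf{t}_n)$ is $\mu(\cdot\mid\textbf{t}_n)$; (iii) $(\textbf{t}_1,\dots,\textbf{t}_n)$ has the law of the first $n$ stages of the chain with transition $p$ started from $m$. All three are immediate at $n=1$ since $(\textbf{s}_1,\textbf{t}_1)\sim\mu$ and $\mu$ has both marginals equal to $m$. For the step, condition on $(\textbf{t}_1,\dots,\textbf{t}_n)$ with $\textbf{t}_n=t$; by (ii) the law of $\textbf{s}_n$ is then $\mu(\cdot\mid t)$, and since the transition out of $(\textbf{s}_n,\textbf{t}_n)$ does not depend on the hidden coordinates $\textbf{s}_1,\dots,\textbf{s}_{n-1}$, one gets for all $s'\in S$ and $t'\in T$
\[
\Prob\big(\textbf{s}_{n+1}=s',\,\textbf{t}_{n+1}=t'\mid \textbf{t}_1,\dots,\textbf{t}_n=t\big)=\Big(\sum_{s\in S}\mu(s\mid t)\,p(s'\mid s)\Big)K(t'\mid t,s')=\nu_t(s')\,K(t'\mid t,s')=\mu(s'\mid t')\,p(t'\mid t).
\]
Summing over $s'$ and using $\sum_{s'}\mu(s'\mid t')=1$ gives $\Prob(\textbf{t}_{n+1}=t'\mid\textbf{t}_1,\dots,\textbf{t}_n=t)=p(t'\mid t)$, which together with (iii) at stage $n$ yields (iii) at stage $n+1$; dividing the displayed identity by $p(t'\mid t)$ gives (ii) at stage $n+1$; and (i) at stage $n+1$ then follows from (ii) at stage $n+1$ together with $\textbf{t}_{n+1}\sim m$. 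Passing to the limit, (iii) is \textbf{P1}, (i) is \textbf{P2}, and (ii) is \textbf{P3}.

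For \textbf{P4}, I would use that the real‑state coordinate evolves on its own. Represent the joint chain by two independent i.i.d.\ families of uniform random variables, one driving the $p$‑updates of $\textbf{s}$ and the other the $K$‑draws of $\textbf{t}$. Then $(\textbf{s}_{n+1},\textbf{s}_{n+2},\dots)$ is a measurable function of $\textbf{s}_n$ and of a subfamily of the $\textbf{s}$‑uniforms that is independent of $\calF_n:=\sigma(\textbf{s}_1,\textbf{t}_1,\dots,\textbf{s}_n,\textbf{t}_n)$. Hence $\E[f(\textbf{s}_{n+1},\textbf{s}_{n+2},\dots)\mid\calF_n]$ is a function of $\textbf{s}_n$ alone for every bounded measurable $f$, which is equivalent to saying that $(\textbf{s}_{n+1},\textbf{s}_{n+2},\dots)$ is conditionally independent of $\calF_n$ given $\textbf{s}_n$; since $(\textbf{t}_1,\dots,\textbf{t}_n)$ is $\calF_n$‑measurable, \textbf{P4} follows. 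I expect the only genuinely delicate point to be the discovery of the kernel $K$ together with the realization that Property \textbf{P} is exactly what is needed to normalize it; everything afterwards is bookkeeping with conditional laws.
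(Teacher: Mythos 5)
Your construction is correct and is essentially the paper's own: your kernel $K(t'\mid t,s')=\mu(s'\mid t')p(t'\mid t)/\nu_t(s')$ is exactly the conditional $\bar\mu(\cdot\mid t,s')$ of the auxiliary distribution $\bar\mu(t',s,t)=\mu(s,t)p(t\mid t')m(t')/m(t)$ that the paper introduces, with Property \textbf{P} playing the identical normalizing role (the paper's Lemma on the marginals of $\bar\mu$), and the inductive verification of \textbf{P1}--\textbf{P3} and the autonomy argument for \textbf{P4} match the paper's. The only cosmetic difference is that the paper adds an auxiliary variable $\textbf{t}_0$ to unify the first step with the others, whereas you initialize directly from $(\textbf{s}_1,\textbf{t}_1)\sim\mu$.
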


We emphasize that only Lemma \ref{lemm4bis} makes use of
\textbf{Assumption A}. This has the following consequence. Given
$\mu\in \calM'$, using Lemma \ref{lemm4} and the construction of the paper,
 one has $U^1(\mu_0,y)\geq U^1(\mu,y)$. Thus, the
conclusion $U^1(\mu_0,y)=\max_{\mu\in \calM'}U^1(\mu,y)$ holds,
irrespective of whether \textbf{Assumption A} is met or not.

\subsection{Proof of Lemma \ref{lemm4}}
Let $\mu\in \calM'$ be given, and define $\bar\mu\in \Delta(T\times S\times T)$ by
\begin{equation}
\label{equA0}
\bar\mu(t',s,t)=\mu(s,t) p(t\mid t') \frac{m(t')}{m(t)}, \mbox{ } (t',s,t)\in T\times S\times T.
\end{equation}
 For every two indices $i,j\in \{1,2,3\}$ with $i < j$, denote by $\bar \mu_{i,j}$ the marginal of $\bar\mu$ on the $i$-th and $j$-th coordinates.

 We will use the following properties of $\bar \mu$.

 \begin{lemma}\label{lemm3}
 One has
 \begin{enumerate}
 \item $\bar \mu_{2,3}=\mu$;
 \item $\bar \mu_{1,3}(t',t)=m(t')p(t\mid t')$ for every $t,t'\in T$;
 \item $\bar\mu_{1,2}(t',s')=\sum_{s\in S}\bar\mu_{2,3}(s,t')p(s'\mid s),$
 for each $t'\in T,s'\in S$;
 \item  $\bar\mu(s\mid t',t)=\mu(s\mid t)$ for each $(t',s,t)\in T\times S\times T$.
 \end{enumerate}
 \end{lemma}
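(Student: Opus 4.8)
The plan is to establish the four assertions by direct computation, using only three facts about the data: the invariance of $m$, which gives $\sum_{t'}m(t')p(t\mid t')=m(t)$; the fact that $\mu\in\calM$ has marginal $m$ on both $S$ and $T$, so that in particular $\sum_{s}\mu(s,t)=m(t)$ and $\mu(s,t)=m(t)\mu(s\mid t)$; and — only for assertion (3) — Property \textbf{P}, i.e. equation (\ref{equ5}). It is convenient to record first that $\bar\mu$ is genuinely a probability distribution: nonnegativity is clear since $m$ has full support (the chain being irreducible), and the total mass equals $1$ by the same computation that yields assertion (1).

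For assertion (1), I would sum $\bar\mu(t',s,t)$ over $t'$: since $\mu(s,t)$ and $m(t)$ do not depend on $t'$, the sum factors as $\frac{\mu(s,t)}{m(t)}\sum_{t'}m(t')p(t\mid t')$, which collapses to $\mu(s,t)$ by invariance of $m$. For assertion (2), I would instead sum over $s$: the factor $p(t\mid t')\frac{m(t')}{m(t)}$ comes out, leaving $\sum_s\mu(s,t)=m(t)$, hence $\bar\mu_{1,3}(t',t)=m(t')p(t\mid t')$. Assertion (4) is then immediate: for any $(t',s,t)$ with $p(t\mid t')>0$ one divides $\bar\mu(t',s,t)$ by $\bar\mu_{1,3}(t',t)$ as just computed, the factors $p(t\mid t')$ and $m(t')$ cancel, and one is left with $\mu(s,t)/m(t)=\mu(s\mid t)$; when $p(t\mid t')=0$ the slice $\bar\mu(t',\cdot,t)$ vanishes identically and the conditional law is immaterial.

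The one substantive step is assertion (3), and this is where Property \textbf{P} enters. I would compute $\bar\mu_{1,2}(t',s')=\sum_t\bar\mu(t',s',t)$; rewriting $\mu(s',t)/m(t)=\mu(s'\mid t)$ gives $\bar\mu_{1,2}(t',s')=m(t')\sum_{t}\mu(s'\mid t)p(t\mid t')$. Now equation (\ref{equ5}), read with the state slot occupied by $s'$ and the fictitious-state slot by $t'$, says precisely that $\sum_{t}\mu(s'\mid t)p(t\mid t')=\sum_{s}\mu(s\mid t')p(s'\mid s)$; substituting this and using $m(t')\mu(s\mid t')=\mu(s,t')$ yields $\bar\mu_{1,2}(t',s')=\sum_{s}\mu(s,t')p(s'\mid s)$, and finally $\mu(s,t')=\bar\mu_{2,3}(s,t')$ by assertion (1) gives the claimed identity.

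The only places where care is needed are cosmetic: matching the free and bound variables correctly when invoking (\ref{equ5}) in the proof of (3), and the usual harmless convention on conditional probabilities when a conditioning event has probability zero in (4). Beyond that there is no real obstacle — the lemma is pure bookkeeping, and I expect the write-up to be a handful of one-line displays.
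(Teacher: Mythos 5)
Your proof is correct and follows essentially the same route as the paper's: claims (1), (2), and (4) by direct summation/division using the invariance of $m$ and the marginal condition on $\mu$, and claim (3) by reducing both sides to $m(t')\sum_{t}\mu(s'\mid t)p(t\mid t')$ via Property \textbf{P}. The only difference is cosmetic (you work from $\bar\mu_{1,2}$ toward the right-hand side, the paper works from the right-hand side toward $\bar\mu_{1,2}$), so there is nothing further to add.
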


\begin{proof}
We prove the four claims in turn. Let $s,t\in S\times T$ be given. One has
\begin{eqnarray*}
\bar\mu_{2,3}(s,t)&=& \sum_{t'\in T}\bar\mu(t',s,t)= \sum_{t'\in T}\mu(s,t)p(t\mid t')\frac{m(t')}{m(t)}\\
&=& \frac{\mu(s,t)}{m(t)}\sum_{t'\in T}p(t\mid t')m(t') = \mu(s,t),
\end{eqnarray*}
which proves the first claim.

To prove the second claim, let $t',t\in T$ be given. One has
\[
\bar\mu_{1,3}(t',t)= \sum_{s\in S}\bar\mu(t',s,t)
=  \sum_{s\in S}\mu(s,t)p(t\mid t')\frac{m(t')}{m(t)}= p(t\mid t') m(t'),
\]
where the last equality holds since the marginal distribution of $\mu$ on $S$ is $m$.

We turn to the third claim. Let $t'\in T$, $s'\in S$ be given.
By the first claim, and since $\mu \in \calM'$, one has
\begin{equation}
\label{equA1}
\sum_{s\in S}\bar\mu_{2,3} (s,t')p(s'\mid s) = \sum_{s\in S} \mu(s,t')p(s'\mid s) = m(t')\sum_{t\in T}\mu(s'\mid t) p(t\mid t').
\end{equation}

On the other hand,
\begin{equation}
\label{equA2}
\bar\mu_{1,2}(t',s')=\sum_{t\in T}\bar\mu(t',s',t)=\sum_{t\in T} \mu(s',t)p(t\mid t')\frac{m(t')}{m(t)}.
\end{equation}
The third claim follows from (\ref{equA1}) and (\ref{equA2}).

Finally, let $(t',s,t)\in T\times S\times T$ be given. By the second claim,
\[\bar\mu(s\mid t',t)=\frac{\bar\mu (t',s,t)}{\mu_{1,3}(t',t)}= \frac{\mu(s,t)p(t\mid t')}{p(t\mid t')m(t')}\times \frac{m(t')}{m(t)}=\mu(s\mid t),\]
and the fourth claim follows.
\end{proof}

\bigskip

We construct the sequence $(\textbf{t}_n)_n$ as follows. The initial values $\textbf{t}_0$ and $\textbf{t}_1$ are drawn according to
the conditional distribution $\bar \mu(\cdot |\textbf{s}_1)\in \Delta (T\times T)$.
For $n\neq 2$, $\textbf{t}_n$ is drawn according to the conditional distribution $\bar\mu(\cdot \mid \textbf{t}_{n-1},\textbf{s}_n)$.
In this construction, $\textbf{t}_0$ is used to unify the treatment of $\textbf{s}_1$ with that of $(\textbf{s}_n)_{n \geq 2}$.
Property \textbf{P4} thus holds by construction. Properties \textbf{P1} and  \textbf{P2} follow from the next lemma.

\begin{lemma} The law of $(\textbf{t}_{n-1},\textbf{s}_n,\textbf{t}_n)$ is equal to $\bar\mu$, for each stage $n\geq 1$.
\end{lemma}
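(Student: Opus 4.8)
I want to prove, by induction on $n\geq 1$, that the law of the triple $(\textbf{t}_{n-1},\textbf{s}_n,\textbf{t}_n)$ equals $\bar\mu$. Granting this, the marginal on the last two coordinates is $\bar\mu_{2,3}=\mu$ by claim (1) of Lemma \ref{lemm3}, which gives \textbf{P2}; and the marginal on the last coordinate alone is $m$ (the $S$-marginal of $\mu$), which combined with the transition structure below will yield \textbf{P1}.

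\medskip

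First I would treat the base case $n=1$. By construction, $(\textbf{t}_0,\textbf{t}_1)$ is drawn from $\bar\mu(\cdot\mid\textbf{s}_1)$, while $\textbf{s}_1\sim m$. Since the $S$-marginal (middle coordinate) of $\bar\mu$ is $m$ by claim (1) of Lemma \ref{lemm3} together with the fact that $\mu$ has $S$-marginal $m$, the joint law of $(\textbf{t}_0,\textbf{s}_1,\textbf{t}_1)$ is exactly $\bar\mu(\cdot\mid\textbf{s}_1)\,m(\textbf{s}_1)=\bar\mu$.

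\medskip

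For the induction step, suppose $(\textbf{t}_{n-1},\textbf{s}_n,\textbf{t}_n)\sim\bar\mu$ for some $n\geq 1$; I must show $(\textbf{t}_n,\textbf{s}_{n+1},\textbf{t}_{n+1})\sim\bar\mu$. The variable $\textbf{s}_{n+1}$ is generated by the Markov chain: conditionally on the past, its law is $p(\cdot\mid\textbf{s}_n)$, and crucially $\textbf{s}_{n+1}$ is conditionally independent of $\textbf{t}_{n-1},\textbf{t}_n$ given $\textbf{s}_n$ (this is Property \textbf{P4}/\textbf{P1}, which holds by construction and must be invoked here). Then $\textbf{t}_{n+1}$ is drawn from $\bar\mu(\cdot\mid\textbf{t}_n,\textbf{s}_{n+1})$. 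So I compute, for fixed $(t,s',t')$,
\[
\Prob(\textbf{t}_n=t,\textbf{s}_{n+1}=s',\textbf{t}_{n+1}=t')
=\Prob(\textbf{t}_n=t,\textbf{s}_{n+1}=s')\,\bar\mu(t'\mid t,s').
\]
Now $\Prob(\textbf{t}_n=t,\textbf{s}_{n+1}=s')=\sum_{s\in S}\Prob(\textbf{t}_n=t,\textbf{s}_n=s)\,p(s'\mid s)$, and by the induction hypothesis $\Prob(\textbf{t}_n=t,\textbf{s}_n=s)=\bar\mu_{2,3}(s,t)$ (the marginal of $\bar\mu$ on coordinates $2,3$, applied with the roles suitably read off — here I must be careful that $\textbf{t}_n$ sits in the third slot of the triple at stage $n$). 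By claim (3) of Lemma \ref{lemm3}, $\sum_{s}\bar\mu_{2,3}(s,t)p(s'\mid s)=\bar\mu_{1,2}(t,s')$. Hence the displayed probability equals $\bar\mu_{1,2}(t,s')\,\bar\mu(t'\mid t,s')=\bar\mu(t,s',t')$, completing the induction.

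\medskip

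The main obstacle I anticipate is bookkeeping the conditional independence, not the algebra: I must argue that $\textbf{s}_{n+1}$ depends on the history only through $\textbf{s}_n$ (so that the $\bar\mu_{2,3}$-to-$\bar\mu_{1,2}$ transition of claim (3) applies cleanly), and that $\textbf{t}_{n+1}$, once drawn from $\bar\mu(\cdot\mid\textbf{t}_n,\textbf{s}_{n+1})$, does not disturb earlier variables. Both are built into the construction, but writing them down carefully — e.g. via the filtration generated by $(\textbf{s}_k)_{k\leq n}$ and $(\textbf{t}_k)_{k\leq n}$ — is where the care is needed. Once the lemma is in hand, \textbf{P1} follows since the law of $\textbf{t}_{n+1}$ given $\textbf{t}_n$ is $\bar\mu_{1,3}(t_n,\cdot)/m(t_n)=p(\cdot\mid t_n)$ by claim (2) of Lemma \ref{lemm3}, matching the transition of $(\textbf{s}_n)$; \textbf{P2} is immediate; \textbf{P3} follows from claim (4) of Lemma \ref{lemm3}, which says $\bar\mu(s\mid t',t)=\mu(s\mid t)$ depends only on $t=\textbf{t}_n$; and \textbf{P4} is by construction.
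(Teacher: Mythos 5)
Your proof is correct and follows essentially the same route as the paper: the same induction, the same two-step decomposition (first showing $(\textbf{t}_n,\textbf{s}_{n+1})\sim\bar\mu_{1,2}$ via claim (3) of the auxiliary lemma together with the conditional independence of $\textbf{s}_{n+1}$ from the fictitious states given $\textbf{s}_n$, then appending $\textbf{t}_{n+1}\sim\bar\mu(\cdot\mid\textbf{t}_n,\textbf{s}_{n+1})$). The bookkeeping point you flag is exactly the one the paper handles by invoking \textbf{P4}.
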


\begin{proof}
We argue by induction. Observe that the law of $\textbf{s}_1$ is equal to $m$. Therefore,
\[\prob((\textbf{t}_0,\textbf{s}_1,\textbf{t}_1)=(t',s,t))= m(s)\bar\mu(t',t\mid s)=\bar\mu(t',s,t).\]
Assume that the claim holds for some $n \in \dN$. We will prove that the law of $(\textbf{t}_n,\textbf{s}_{n+1})$ is then
equal to $\bar\mu_{1,2}$. This follows from the following sequence of equalities, which holds for every $t'\in T,s\in S$:
\begin{eqnarray*}
\prob((\textbf{t}_{n},\textbf{s}_{n+1})=(t',s))&=& \sum_{s'\in S}\prob((\textbf{s}_{n},\textbf{t}_{n},\textbf{s}_{n+1})=(s',t',s))\\
&=& \sum_{s'\in S}\prob((\textbf{s}_{n},\textbf{t}_{n})=(s',t'))\times \prob(\textbf{s}_{n+1}=s|(\textbf{s}_{n},\textbf{t}_n)=(s',t))\\
&=& \sum_{s'\in S}\bar\mu_{2,3}(s',t')p(s\mid s')= \bar\mu_{1,2}(t',s),
\end{eqnarray*}

where the last equality follows from Lemma \ref{lemm3}(3) and \textbf{P4}.
Since the conditional law of $\textbf{t}_{n+1}$ given $(\textbf{t}_n,\textbf{s}_{n+1})$ is equal to $\bar\mu(\cdot\mid \textbf{t}_n,\textbf{s}_{n+1})$,
this yields the claim for $n+1$.
\end{proof}

\bigskip

Finally, property \textbf{P3} follows from the second part of the next lemma.
The first part of the lemma is needed to the proof of the second part.

\begin{lemma}
(1) The conditional law of $\textbf{t}_n$ given $(\textbf{t}_0,\ldots, \textbf{t}_{n-1})$ coincides with the conditional law of $\textbf{t}_n$ given
$\textbf{t}_{n-1}$.

(2) The conditional law of $\textbf{s}_n$ given $(\textbf{t}_0,\ldots, \textbf{t}_{n-1},\textbf{t}_n)$
coincides with the conditional law of $\textbf{s}_n$ given
$\textbf{t}_{n}$.
\end{lemma}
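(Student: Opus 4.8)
The plan is to prove the two statements together, by induction on $n$, using only two ingredients already at hand: the identities for $\bar\mu$ collected in Lemma \ref{lemm3}, and the fact proved just above that $(\mathbf{t}_{n-1},\mathbf{s}_n,\mathbf{t}_n)$ has law $\bar\mu$ for every $n\geq 1$. Concretely, I would induct on the pair of claims, for $n\geq 1$: $(\mathrm{I}_n)$ that $\prob(\mathbf{t}_n=\cdot\mid\mathbf{t}_0,\ldots,\mathbf{t}_{n-1})=p(\cdot\mid\mathbf{t}_{n-1})$ — which is part (1), and in fact identifies the transition — and $(\mathrm{II}_n)$ that $\prob(\mathbf{s}_n=\cdot\mid\mathbf{t}_0,\ldots,\mathbf{t}_n)=\mu(\cdot\mid\mathbf{t}_n)$, which is part (2). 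The base case $n=1$ is immediate from $(\mathbf{t}_0,\mathbf{s}_1,\mathbf{t}_1)\sim\bar\mu$: Lemma \ref{lemm3}(2) gives $\prob(\mathbf{t}_1=\cdot\mid\mathbf{t}_0)=\bar\mu_{1,3}(\mathbf{t}_0,\cdot)/m(\mathbf{t}_0)=p(\cdot\mid\mathbf{t}_0)$, and Lemma \ref{lemm3}(4) gives $\prob(\mathbf{s}_1=\cdot\mid\mathbf{t}_0,\mathbf{t}_1)=\bar\mu(\cdot\mid\mathbf{t}_0,\mathbf{t}_1)=\mu(\cdot\mid\mathbf{t}_1)$.

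For the inductive step, suppose $(\mathrm{II}_{n-1})$ holds. The first move is to compute the conditional law of $\mathbf{s}_n$ given the $\mathbf{t}$-history up to stage $n-1$. Since $\mathbf{s}_n$ is drawn from $p(\cdot\mid\mathbf{s}_{n-1})$ with randomness independent of $\mathbf{s}_1,\ldots,\mathbf{s}_{n-2}$ and of all the auxiliary draws, it is, conditionally on $\mathbf{s}_{n-1}$, independent of the whole vector $(\mathbf{t}_0,\ldots,\mathbf{t}_{n-1})$ (this is the mechanism behind \textbf{P4}); combining this with $(\mathrm{II}_{n-1})$ yields
\[ \prob(\mathbf{s}_n=s\mid\mathbf{t}_0,\ldots,\mathbf{t}_{n-1})=\sum_{s'\in S}p(s\mid s')\,\mu(s'\mid\mathbf{t}_{n-1})=\frac{\bar\mu_{1,2}(\mathbf{t}_{n-1},s)}{m(\mathbf{t}_{n-1})}, \]
the second equality being Lemma \ref{lemm3}(1) and (3); in particular this law depends on the history only through $\mathbf{t}_{n-1}$. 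Since $\mathbf{t}_n$ is drawn from $\bar\mu(\cdot\mid\mathbf{t}_{n-1},\mathbf{s}_n)$ independently of the rest of the past, integrating out $\mathbf{s}_n$ gives
\[ \prob(\mathbf{t}_n=t\mid\mathbf{t}_0,\ldots,\mathbf{t}_{n-1})=\sum_{s\in S}\bar\mu(t\mid\mathbf{t}_{n-1},s)\,\frac{\bar\mu_{1,2}(\mathbf{t}_{n-1},s)}{m(\mathbf{t}_{n-1})}=\frac{1}{m(\mathbf{t}_{n-1})}\sum_{s\in S}\bar\mu(\mathbf{t}_{n-1},s,t)=p(t\mid\mathbf{t}_{n-1}), \]
the last step by Lemma \ref{lemm3}(2); this is $(\mathrm{I}_n)$. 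Finally, combining the conditional law of $\mathbf{s}_n$ with the transition $\bar\mu(\cdot\mid\mathbf{t}_{n-1},\mathbf{s}_n)$ for $\mathbf{t}_n$ gives the joint probability $\prob(\mathbf{s}_n=s,\mathbf{t}_n=t_n\mid\mathbf{t}_0,\ldots,\mathbf{t}_{n-1})=\bar\mu(\mathbf{t}_{n-1},s,t_n)/m(\mathbf{t}_{n-1})$; dividing by the marginal $\prob(\mathbf{t}_n=t_n\mid\mathbf{t}_0,\ldots,\mathbf{t}_{n-1})=p(t_n\mid\mathbf{t}_{n-1})=\bar\mu_{1,3}(\mathbf{t}_{n-1},t_n)/m(\mathbf{t}_{n-1})$ produces $\prob(\mathbf{s}_n=s\mid\mathbf{t}_0,\ldots,\mathbf{t}_n)=\bar\mu(s\mid\mathbf{t}_{n-1},\mathbf{t}_n)=\mu(s\mid\mathbf{t}_n)$ by Lemma \ref{lemm3}(4). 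This is $(\mathrm{II}_n)$, closing the induction; part (2) is $(\mathrm{II}_n)$ and also establishes \textbf{P3}.

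The only delicate point, and where I would be careful, is the book-keeping of the conditional independences hidden in the definition of $(\mathbf{t}_n)$: one must check that conditioning on $\mathbf{s}_{n-1}$ decouples $\mathbf{s}_n$ from the \emph{entire} vector $(\mathbf{t}_0,\ldots,\mathbf{t}_{n-1})$ rather than just from $\mathbf{t}_{n-1}$, and that the auxiliary draw producing $\mathbf{t}_n$ sees the past only through $(\mathbf{t}_{n-1},\mathbf{s}_n)$. Both follow at once from the fact that $(\mathbf{t}_n)$ is obtained from $(\mathbf{s}_n)$ by fresh, independent randomizations, so once these are stated cleanly the rest is a mechanical substitution driven entirely by Lemma \ref{lemm3}; in particular no appeal to \textbf{Assumption A} is needed here, it having entered only through Lemma \ref{lemm4bis}.
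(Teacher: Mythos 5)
Your proof is correct and follows essentially the same route as the paper's: induction on $n$, first computing $\prob(\mathbf{s}_n\mid\mathbf{t}_0,\ldots,\mathbf{t}_{n-1})$ via the Markov property and the induction hypothesis, then integrating out $\mathbf{s}_n$ to get part (1), and finally a Bayes-rule computation with Lemma \ref{lemm3}(4) for part (2). The only (harmless) difference is that you state the conditional laws explicitly as $p(\cdot\mid\mathbf{t}_{n-1})$ and $\mu(\cdot\mid\mathbf{t}_n)$, whereas the paper phrases the claims as independence from the earlier history; your bookkeeping of the conditional independences and the observation that \textbf{Assumption A} plays no role here both match the paper.
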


\begin{proof}
The proof is by induction.
For $n=1$, the first statement trivially holds, while the second statement holds by Lemma \ref{lemm3}(1). Assume that the claim holds for some $n \in \dN$.
For brevity, we denote by $t_{n}, s_n,\cdots$ generic values of $\textbf{t}_n,\textbf{s}_n\cdots$, and we write $\prob(t_n,s_n)$ instead
of $\prob((\textbf{t}_n,\textbf{s}_n)=(t_n,s_n))$.

Observe first that by the definition of $(\textbf{t}_n)$,
\begin{eqnarray}
\nonumber
\prob(t_{n+1}\mid t_0,\ldots, t_n)&=& \sum_{s_{n+1} \in S}\prob(s_{n+1}\mid t_0,\ldots, t_n)\prob(t_{n+1}\mid t_0,\ldots,t_n,s_{n+1}) \\
&=& \sum_{s_{n+1} \in S}\prob(s_{n+1}\mid t_0,\ldots, t_n)\times
\bar\mu(t_{n+1}\mid t_n,s_{n+1}). \label{equA3}
\end{eqnarray}

Moreover,
\begin{eqnarray}
\nonumber
\prob(s_{n+1}\mid t_0,\ldots, t_n)&=& \sum_{s_{n} \in S}\prob(s_{n}\mid t_0,\ldots, t_n)\prob(s_{n+1}\mid s_n,t_0,\ldots,t_n) \\
\nonumber
&=& \sum_{s_{n} \in S}\prob(s_{n}\mid t_0,\ldots, t_n)\times p(s_{n+1}\mid s_n)\\
&=& \sum_{s_{n} \in S}\prob(s_{n}\mid t_n)\times p(s_{n+1}\mid
s_n), \label{equA4}
\end{eqnarray}
where the last equality holds by the induction hypothesis.
Note that the right-hand side of (\ref{equA4}) is independent of $(t_0,t_1,\ldots,t_{n-1})$,
and therefore
\begin{equation}
\label{equ 17.1}
\prob(s_{n+1}\mid t_0,\ldots, t_n) = \prob(s_{n+1}\mid t_n).
\end{equation}

Plugging (\ref{equA4}) in (\ref{equA3}), one obtains
\begin{eqnarray*}
\prob(t_{n+1}\mid t_0,\ldots, t_n)&=& \sum_{s_{n+1} \in S}\sum_{s_n \in S}\prob(s_n\mid t_n)\times p(s_{n+1}\mid s_n)\times \bar\mu (t_{n+1}\mid t_n,s_{n+1}).
\end{eqnarray*}
The right hand side is independent of $t_1,\ldots, t_{n-1}$, hence it is equal to $\prob(t_{n+1}\mid t_n)$,
and the first part of the lemma follows.

We turn to the second statement. One has
\begin{eqnarray*}
\prob(s_{n+1}\mid t_0,\ldots, t_{n+1})&=& \frac{\prob(s_{n+1},t_{n+1}\mid t_0,\ldots,t_n)}{\prob(t_{n+1}\mid t_0,\ldots, t_n)}
= \frac{\prob(s_{n+1}\mid t_0,\ldots,t_n)\times \prob(t_{n+1}\mid s_{n+1},t_0,\ldots,t_n)}{\prob(t_{n+1}\mid t_0,\ldots, t_n)}\\
&=& \frac{\prob(s_{n+1}\mid t_n)\bar\mu(t_{n+1}\mid s_{n+1},t_n)}{\prob(t_{n+1}\mid t_n}
= \prob(s_{n+1}\mid t_n) \frac{\mu(s_{n+1},t_{n+1})}{\bar\mu(t_n,s_{n+1})} \frac{m(t_n)}{m(t_{n+1})}\\
&=& \frac{\mu(s_{n+1},t_{n+1})}{m(t_{n+1})}
= \prob(s_{n+1}\mid t_{n+1}),
\end{eqnarray*}
where the third equality holds by (\ref{equ 17.1}), the construction of $(\textbf{t}_n)_n$ and the first claim,
and the fourth equality holds by (\ref{equA0}).
This concludes the proof of the induction step.\end{proof}

The proof of Lemma \ref{lemm4} is now completed.

\subsection{Proof of Lemma \ref{lemm4bis}}

We here verify that if Assumption \textbf{A} holds then $\calM=\calM'$.
Let $p$ be a transition function such that $p(s'\mid s)=\alpha_{s'}$ for every two states
$s\neq s'$, and $p(s\mid s)=1-\displaystyle \sum_{s'\neq s}\alpha_{s'}$.
Set $C=\displaystyle \sum_{s\in S}\alpha_s$.
One can verify that the invariant measure of $p$ is given by $m(s)=\frac{\alpha_s}{C}$ for each $s\in S$.

Let $\mu\in \calM$. We will prove that for every $(t, s')\in T\times S$, the equality
\begin{equation}
\label{eq1}\sum_{s\in S} \mu(s\mid t) p({s}'\mid s)=\sum_{ t' \in T}  \mu({s}'\mid {t'}) p({t'}\mid t)\end{equation}
holds.
Fix $t \in T$ and ${s'} \in S$. Observe that
\begin{eqnarray}
\nonumber
 \sum_{s\in S} \mu(s\mid t) p({s}'\mid s)&=&\mu({s}'\mid t) \left(1-\sum_{s\neq s'} \alpha_{s}\right)  + \sum_{s \neq s'} \alpha_{s'} \mu(s\mid t)\\
\nonumber
 &=&   \mu(s'\mid t)\left(1-\sum_{s\neq s'} \alpha_{s}\right)  +   \alpha_{s'} (1-\mu(s'\mid t))\\
 &=&  \alpha_{s'} + \mu(s'\mid t) (1-C).
\label{equ5.2}
 \end{eqnarray}
On the other hand, one has
\begin{eqnarray}
\label{equ5.3}
\sum_{t' \in T}  \mu(s'\mid t') p(t'\mid t)
&=&\mu(s'\mid t) \left(1-\sum_{t'\neq t} \alpha_{t'}\right)  + \sum_{t'\neq t}\mu(s'\mid t')   \alpha_{t'}\\
&=&\mu(s'\mid t) \left(1-C+ \alpha_{t}\right)  + \sum_{t'\neq t}\mu(s'\mid t')   \alpha_{t'}.
\end{eqnarray}
When subtracting (\ref{equ5.3}) from (\ref{equ5.2}) one obtains
\begin{eqnarray}
\nonumber
\sum_{s\in S} \mu(s\mid t) p(s'\mid s)-\sum_{t' \in T}  \mu(s'\mid t') p(t'\mid t) &= &
 \alpha_{s'}  - \mu(s'\mid t) \alpha_t -\sum_{t'\neq t}\mu(s'\mid t')   \alpha_{t'} \\
\nonumber
 &=&
\alpha_{s'}-\sum_{t'\in S}\mu(s'\mid t')\alpha_{t'}\\
\label{equ 20.1}
&=&
\alpha_{s'}-  C\sum_{t'\in S}\mu(s'\mid t')m(t') \\
\nonumber
&=&
\alpha_{s'}-  C m(s') =
0,
\label{equ 20.3}
\end{eqnarray}
where (\ref{equ 20.1}) and (\ref{equ 20.3}) hold since $\alpha(s) = Cm(s)$ for every $s \in S$.
This proves (\ref{eq1}), as desired.

\section{Proof of Theorem \ref{theorem3}}

The proof of Theorem \ref{theorem3} consists of two independent parts.
We first prove that, if condition \textbf{B} does not hold for some game $G$, then it does not hold throughout some  neighborhood of $G$.

\begin{proposition}\label{prop1}
Let $G$ be a game that does not satisfy condition \textbf{B}.
Then there is a neighborhood $\calN$ of $G$ such that no game in $\calN$ satisfies condition \textbf{B}.
\end{proposition}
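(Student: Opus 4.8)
The plan is to show that Condition \textbf{B} is an \emph{open} condition in the space of payoff functions, or equivalently that its negation is open. Recall that Condition \textbf{B} asks for a non-constant $y:S\to\Delta(B)$ with $U(\mu_0,y)\in E(\calM)$, i.e.\ satisfying both \textbf{C1} (equivalently \textbf{C'1}, by Lemma \ref{lemmbasic}) and \textbf{C2}. Suppose $G$ does \emph{not} satisfy Condition \textbf{B}. First I would observe that the set
\[
Y_2(G):=\{y:S\to\Delta(B):\ U^2(\mu_0,y)\geq v^2\}
\]
is a (nonempty) compact polytope depending continuously on $G$ (the inequality defining it has coefficients that are continuous in the payoffs, and $v^2$ is continuous in the payoffs), and Condition \textbf{B} fails precisely when every $y\in Y_2(G)$ satisfying \textbf{C'1} is a \emph{constant} map. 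Equivalently, using \textbf{C'1} and the fact that the babbling strategies (constant $y$) always satisfy \textbf{C'1} and \textbf{C2}, failure of \textbf{B} says: no non-constant element of $Y_2(G)$ satisfies \textbf{C'1}.

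The key step is a compactness/continuity argument. For a game $G'$ near $G$, let $y'$ be a non-constant element of $Y_2(G')$ satisfying \textbf{C'1} (for $G'$); I want to derive a contradiction when $G'$ is close enough to $G$. The obstruction to a naive limiting argument is that as $G'\to G$ a witnessing $y'$ could degenerate toward a constant map, so one cannot simply pass to the limit and contradict the failure of \textbf{B} for $G$. To handle this I would quantify non-constancy: fix some $\rho>0$ and first treat the set $Y^{\rho}(G'):=\{y\in Y_2(G'):\ \max_{s,s'}\|y(\cdot\mid s)-y(\cdot\mid s')\|\geq\rho\}$, which is compact and varies upper-hemicontinuously in $G'$. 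If for a sequence $G_k\to G$ there were $y_k\in Y^{\rho}(G_k)$ satisfying \textbf{C'1} for $G_k$, then along a subsequence $y_k\to y_\infty$, with $y_\infty\in Y^{\rho}(G)\subseteq Y_2(G)$ non-constant, and since the inequalities in \textbf{C'1} are weak and their coefficients depend continuously on the payoffs, $y_\infty$ satisfies \textbf{C'1} for $G$ — contradicting the failure of \textbf{B} at $G$. Hence for each fixed $\rho$ there is a neighborhood $\calN_\rho$ of $G$ on which no $y\in Y^{\rho}$ satisfies \textbf{C'1}.

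The remaining (and I expect main) difficulty is to rule out witnesses $y'$ with \emph{arbitrarily small} but positive non-constancy, i.e.\ to choose a single $\rho_0>0$ below which \textbf{C'1} forces constancy even after perturbation. Here I would exploit the polyhedral structure: \textbf{C'1} is a finite conjunction of linear inequalities indexed by permutations $\phi$, with the identity permutation giving an equality. The point is that for a constant $y$ all these inequalities hold with equality, so a non-constant near-constant witness must sit in a thin slab near the ``constant face.'' I would analyze the behaviour of the linear functionals $y\mapsto\sum_s u^1(s,y(\cdot\mid s))-\sum_s u^1(s,y(\cdot\mid\phi(s)))$ to first order around the face of constant strategies inside the polytope $Y_2(G)$: since \textbf{B} fails at $G$, the first-order (directional) version of \textbf{C'1} has no non-constant solution in the tangent cone of $Y_2(G)$ at any constant point, and by compactness of the relevant set of unit directions this failure is \emph{strict} and hence survives small perturbations of the payoff data. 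This gives a uniform $\rho_0$ and a neighborhood $\calN=\calN_{\rho_0}$ of $G$ on which Condition \textbf{B} fails, completing the proof. (Alternatively, one can phrase the whole argument semialgebraically: the set of games satisfying \textbf{B} is the projection of a semialgebraic set, and one checks directly that a game on its boundary must itself satisfy \textbf{B}; I would fall back on this if the first-order analysis becomes unwieldy.)
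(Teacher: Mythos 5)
Your overall strategy (show that the negation of \textbf{B} is open, split witnesses into those that are uniformly non-constant and those that degenerate to a constant map) is sound, and the first half works: for a fixed non-constancy threshold $\rho>0$, weak inequalities with coefficients continuous in the payoffs pass to limits, so uniformly non-constant witnesses for games $G_k\to G$ would yield a non-constant witness for $G$. The genuine gap is in your treatment of the degenerate case, which is where all the content of the result lies. Your key claim --- that the directional version of \textbf{C'1} has no non-constant solution in the tangent cone and that ``by compactness of the relevant set of unit directions this failure is strict'' --- does not hold as stated: the set of \emph{non-constant} unit directions is not compact (constant directions are in its closure), and the amount by which the directional system is violated tends to $0$ as a direction approaches the constant subspace, since every functional $y\mapsto U^1(\mu_0,y)-U^1(\mu,y)$ vanishes identically on constant maps. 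So no uniform strictness, and hence no automatic stability under perturbation, follows from compactness alone.

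To close the gap you must project out the constant component of a near-constant witness, and this requires exactly the two structural facts the paper isolates: for \emph{every} payoff function, (i) constant $y$ makes all the \textbf{C1}-functionals vanish identically, and (ii) constant $y$ satisfies $U^2(\mu_0,y)\leq v^2$. These let you deduce that the normalized non-constant part $\hat d_k$ of a witness $y_k$ for $G_k$ satisfies the directional inequalities for $G_k$, hence its limit $\hat d$ satisfies them for $G$; one must then also recenter $\hat d$ (which has zero column sums and possibly negative entries) into an honest map $S\to\Delta(B)$ without destroying the inequalities --- for \textbf{C2} this uses a convex-combination argument, not just linearity. The paper carries out precisely this program, but packages the sequence of witnesses as a semialgebraic curve $\ep\mapsto(u_\ep,y_\ep)$ with a Puiseux expansion $y_\ep=\sum_k\ep^{k/r}y_k$: the constant lower-order coefficients are stripped off using (i) and (ii), and the first non-constant coefficient $y_{l+1}$, suitably recentered, is the desired non-constant solution for $G$. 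Your semialgebraic fallback is closer to the paper's route, but ``one checks directly that a game on the boundary must itself satisfy \textbf{B}'' is again the entire difficulty, since projections of semialgebraic sets need not be closed.
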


 \begin{proof}
The proof relies on the theory of semi-algebraic sets.
We refer to Bochnak, Coste and Roy (1998) for the results used below. Recall that the
set of extreme points of the polytope $\calM$ is denoted by $\calM_e$.

We will use the following two properties, that hold for constant
functions $y : S \to \Delta(B)$.
\begin{enumerate}
\item[\textbf{R1.}] If $y : S \to \Delta(B)$ is constant, then
$U^1(\mu_0,y) = U^1(\mu,y)$ for every $\mu \in \calM$.
\item[\textbf{R2.}] If $y : S \to \Delta(B)$ is constant, then
$U^2(\mu_0,y) \leq v^2$.
\end{enumerate}
Property \textbf{R1} holds because when $y$ is constant,
the payoff is independent of the sender's announcements. Property
\textbf{R2} holds because $v^2$ is the maximum of $U^2(\mu_0,y)$
over all constant functions $y$.

Given a payoff function $\tilde u: S\times B\to \dR^2$, we denote by $\calS(\tilde u)$ the system of inequalities
\[\tilde U^2(\mu_0,y)\geq v^2_{\tilde u} \mbox{ and } \tilde U^1(\mu_0,y)\geq \tilde U^1(\mu,y), \mbox{for all } \mu\in \calM_e,\]
with unknowns $y:S\to \Delta(B)$,
where $v^2_{\tilde u} = \max_{b \in B}\tilde U^2(\mu_0,b)$ is the min-max value of the receiver in the game with payoffs $\tilde u$.

We say that a vector $y\in \dR^{S\times B}$ is \emph{constant} if
$y(s,b)$ only depends on $b$.

Let $u$ denote the payoff function of $G$. By assumption, any solution $y$ to $\calS(u)$ is constant.
We will show that this implies that all solutions to $\calS(\tilde u)$ are constant, for all $\tilde u$ in a neighborhood of $u$.

Assume to the contrary that for every $\ep > 0$ there is a payoff function $u_\ep\in \dR^{2(S\times B)}$ such that (i) $\|u-u_\ep\|<\ep$, and (ii)
the system $\calS(u_\ep)$ has a non-constant solution $y_\ep\in \dR^{S\times B}$.

This implies that there is a \emph{semi-algebraic} map $\ep\in (0,1) \mapsto (u_\ep,y_\ep)$ such that
(i) $\lim_{\ep \to 0} u_\ep = u$, and (ii) $y_\ep$ is a \emph{non-constant} solution to $\calS(u_\ep)$  for every $\ep>0$ small enough.

In particular, the map $\ep\mapsto y_\ep$ has an expansion to a Puiseux series in a neighborhood of zero: there exist $\ep_0>0$,
a natural number $r$ and
 vectors $y_k\in \dR^{S\times B}$ for $k \geq 0$ such that
\[ y_\ep = \sum_{k=0}^\infty \ep^{\frac{k}{r}} y_k, \]
 for every $\ep\in (0,\ep_0)$, and a similar expansion exists for the map $\ep\mapsto u_\ep$.

Note that $y_0=\lim_{\ep\to 0} y_\ep$. This implies in particular that $y_0(\cdot, s)\in \Delta(B)$ for every $s \in S$, and that $y_0$ is a solution to
$\calS(u)$.
In particular,  $y_0$ is constant.

Because $y_\ep(\cdot, s) \in \Delta(B)$,
it follows that $\sum_{b\in B} y_\ep(b\mid s) = 1$ for every $\ep>0$ and every $s\in S$,
so that $\sum_{b\in B} y_k(b\mid s) = 0$ for every $k \geq 1$ and every $s\in S$.

Let $l \geq  0$ be the maximal integer such that $y_0,y_1,\ldots,y_l$ are constant functions.
Because $y_\ep$ is non constant for every $\ep>0$, we have $l < \infty$.
Define a vector $d \in \dR^B$ by
\[ d(b) = \min_{s \in S} y_{l+1}(b, s), \ \ \ \forall b \in B. \]
Note that
\begin{eqnarray}
y_\ep &=& \sum_{k=0}^\infty \ep^{\frac{k}{r}} y_k\\
&=& \left(\sum_{k=0}^l \ep^{\frac{k}{r}} y_k + \ep^{\frac{l+1}{r}}d\right) + \ep^{\frac{l+1}{r}}(y_{l+1}-d) + \sum_{k=l+2}^\infty \ep^{\frac{k}{r}} y_k.
\end{eqnarray}
The first term $\left(\sum_{k=0}^l \ep^{\frac{k}{r}} y_k + \ep^{\frac{l+1}{r}}d\right)$ is independent of $s$,
and all its coordinates are non-negative because $y_\ep$ is non-negative for every $\ep > 0$.
Set
\[ z_\ep = \frac{\sum_{k=0}^l \ep^{\frac{k}{r}} y_k + \ep^{\frac{l+1}{r}}d}{1 + \ep^{\frac{l+1}{r}}\sum_{b \in B} d(b)} \in \dR^{S \times B}. \]
Then $z_\ep(\cdot, s) \in \Delta(B)$ for every $s \in S$, and $z_\ep$ is independent of $s$.
Set
\[ w(\cdot, s) = \frac{y_{l+1}(\cdot,s) - d}{-\sum_{b \in B} d(b)} \in \dR^{S \times B}, \ \ \ \forall s \in S. \]
Then $w(s) \in \Delta(B)$ and $w$ is non-constant.
We will show that $w$ solves $\calS(u)$, contradicting the assumption that all solutions of $\calS(u)$ are constant.

By \textbf{R2}, for every $\ep > 0$ we have $\tilde
U^2(\mu_0,z_\ep) \leq v^2_{u_\ep}$. But $\tilde
U^2(\mu_0,y_\ep) \geq v^2_{u_\ep}$, and $y_\ep$ is a convex
combination of $z_\ep$, $w$, and a ``tail'' which is of a lower
order of $\ep$; by taking the limit $\ep \to 0$ and using $v^2_{u_\ep} \to v^2$ we
obtain $U^2(\mu_0,w) \geq v^2$.

Fix $\mu \in \calM_e$.
By \textbf{R1} it follows that $\tilde U^1(\mu_0,z_\ep) = \tilde U^1(\mu,z_\ep)$.
Because $\tilde U^1(\mu_0,y_\ep) \geq \tilde U^1(\mu,y_\ep)$,
it follows for the same reasoning as above that $U^1(\mu_0,w) \geq U^1(\mu,w)$.
\end{proof}
\bigskip

We turn to the second part of the proof.
\begin{proposition}\label{prop2}
Let $G$ be a game such that condition \textbf{B} holds.
Then any neighborhood of the game $G$ contains a game $G'$ such that $\widehat{E}_{G'}(\calM)\neq \emptyset$.
\end{proposition}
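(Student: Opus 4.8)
The plan is to start from the non-constant witness supplied by condition \textbf{B} and to perturb the payoff function $u=(u^1,u^2)$ of $G$ until the strict inequalities \textbf{D1}--\textbf{D2} become satisfiable. Fix a non-constant $y^0:S\to\Delta(B)$ with $U_G(\mu_0,y^0)\in E_G(\calM)$; by Lemma \ref{lemmbasic} this means $y^0$ satisfies \textbf{C'1}, so that $a_\phi:=\sum_{s}u^1(s,y^0(\cdot\mid s))-\sum_{s}u^1(s,y^0(\cdot\mid\phi(s)))\geq 0$ for every permutation $\phi$, together with $U^2_G(\mu_0,y^0)\geq v^2$. I will perturb $u^2$ to get the strict version of the latter, $u^1$ to get the strict version of the former, and I will simultaneously move the columns of $y^0$ slightly apart so that the perturbed witness has pairwise distinct columns.

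First I would dispose of \textbf{D2}, which is the easy half and uses only that $y^0$ is non-constant. Let $B^{\ast}$ be the set of babbling-optimal actions, i.e.\ the maximizers of $b\mapsto\sum_s m(s)u^2(s,b)$. If $U^2_G(\mu_0,y^0)=v^2$, there is $h:S\times B\to\dR$ with $\sum_s m(s)h(s,y^0(\cdot\mid s))>\sum_s m(s)h(s,b)$ for every $b\in B^{\ast}$: otherwise a theorem of the alternative would produce nonnegative weights $(\lambda_b)_{b\in B^{\ast}}$, not all zero, with $\big(\sum_{b\in B^{\ast}}\lambda_b\big)y^0(b'\mid s)=\lambda_{b'}\mathbf 1_{\{b'\in B^{\ast}\}}$ for all $s$ and $b'$, forcing $y^0(\cdot\mid s)$ to be independent of $s$, a contradiction. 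Replacing $u^2$ by $u^2+\varepsilon h$ with $\varepsilon>0$ small then makes $U^2(\mu_0,y^0)>v^2$ strictly while leaving $u^1$, hence \textbf{C'1}, untouched; and \textbf{D2} is an open condition, so it will persist under the further small perturbations below. Thus we may assume $y^0$ satisfies \textbf{C'1} and \textbf{D2} for $G$.

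The substantive part is \textbf{D1}. Since any permutation that fixes every column of a map $y$ yields an equality in \textbf{C'1} at $y$, a witness for $\widehat{E}(\calM)$ must have pairwise distinct columns, so I would look for a witness $\tilde y=y^0+\eta\xi$ (with $\xi=(\xi_s)_s$ in the zero-sum directions, pointing inward, and $\eta>0$ small) having distinct columns, in the game with $u^1$ replaced by $u^1+\delta g^1$. A direct expansion, using that $u^1(s,\cdot)$ and $g^1(s,\cdot)$ are linear, gives for $\phi\neq\mathrm{id}$
\[
\mathrm{slack}_\phi=(a_\phi+\delta E_\phi)+\eta(P_\phi+\delta Q_\phi),
\]
where $E_\phi,P_\phi,Q_\phi$ are the analogues of $a_\phi$ obtained by replacing $u^1$ by $g^1$, the columns $y^0(\cdot\mid\cdot)$ by the vectors $\xi$, or both. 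Permutations fall into three classes: (I) $a_\phi>0$, for which $\mathrm{slack}_\phi>0$ automatically once $\eta,\delta$ are small; (II) $a_\phi=0$ but $\phi$ moves some column of $y^0$; (III) $\phi$ fixes every column of $y^0$, hence $a_\phi=E_\phi=0$. The engine is a \emph{separation lemma}: for any map $y$ and any finite family $\Phi$ of permutations each moving at least one column of $y$, there is $g:S\times B\to\dR$ with $\sum_s g(s,y(\cdot\mid s))>\sum_s g(s,y(\cdot\mid\phi(s)))$ for every $\phi\in\Phi$ --- indeed, for each such $\phi$ the set of violating $g$ is a proper closed half-space (the relevant linear functional is not identically zero exactly because $\phi$ moves a column), and a finite union of such sets is nowhere dense. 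Applying it to class (II) (with $y=y^0$) furnishes $g^1$ with $E_\phi>0$ there; choosing each $\xi_s$ a small positive multiple of the orthogonal projection $w_s$ of $u^1(s,\cdot)$ onto the zero-sum subspace makes $P_\phi=\tfrac12\sum_s\|w_s-w_{\phi(s)}\|^2\geq 0$, hence $P_\phi>0$ on class (III) except on the ``degenerate'' sub-class where $w_{\phi(s)}=w_s$ for all $s$; there $\phi$ lives inside a single level set of $y^0$ on which $u^1$ is constant up to additive constants, $P_\phi=0$ identically, but $Q_\phi$ can be made positive by a further adjustment of $\xi$ and $g^1$ on that level set (pick the $\xi_s$ there with pairwise distinct magnitudes and match $g^1$ to them, so that a rearrangement inequality applies). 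Taking finally $\delta$ small and $\eta$ a large enough power of $\delta$ makes every $\mathrm{slack}_\phi$ positive, so $\tilde y$ satisfies \textbf{D1} --- and still \textbf{D2} --- for a game arbitrarily close to $G$.

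The hard point is exactly this coordination: the perturbations $\eta\xi$ of $y^0$ and $\delta g^1$ of $u^1$ must jointly keep the game near $G$, give $\tilde y$ distinct columns, and make \emph{every} $\mathrm{slack}_\phi$ strictly positive, and the choices of $\xi$ and of $g^1$ interact precisely on the degenerate permutations of class (III), where one must verify that the partial constraints already imposed on $g^1$ still leave enough freedom to enforce $E_\phi>0$ on class (II). I expect the cleanest way to make the bookkeeping airtight is to argue as in the proof of Proposition \ref{prop1}, through a semialgebraic one-parameter family: assuming for contradiction that some neighbourhood of $G$ contains no game with $\widehat{E}(\calM)\neq\emptyset$, extract a semialgebraic curve $\varepsilon\mapsto(G_\varepsilon,y_\varepsilon)$ with $G_\varepsilon\to G$ and $y_\varepsilon$ an ``$\varepsilon$-approximate'' witness, expand $y_\varepsilon$ as a Puiseux series, and read off from its leading terms --- together with \textbf{C'1} for $y^0$, which keeps $a_\phi\geq 0$ throughout --- a genuine element of $\widehat{E}_G(\calM)$, the desired contradiction. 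On either route the combinatorial core is the separation lemma, together with the observation that only the degenerate directions have to be manufactured by the perturbation.
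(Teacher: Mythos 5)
Your treatment of \textbf{D2} is fine and is a legitimate alternative to the paper's Lemma \ref{lemm31} (the paper instead perturbs $u^2$ by $\varepsilon P(s,b)/(P(s)P(b))$ with $P(s,b)=m(s)y(b\mid s)$ and uses strict convexity of $q\mapsto\sum_b q(b)^2/P(b)$). The problems are all in the \textbf{D1} half. First, the ``separation lemma'' that you call the engine of the argument is justified incorrectly: for a permutation $\phi$ moving a column, the set of $g$ violating the \emph{strict} inequality is the closed half-space $\{g:\sum_s g(s,y(\cdot\mid s))-g(s,y(\cdot\mid\phi(s)))\leq 0\}$, not a hyperplane, and a finite union of proper closed half-spaces is certainly not nowhere dense ($\{x\leq 0\}\cup\{x\geq 0\}=\dR$). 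One must actually prove that the finitely many open half-spaces $\{E_\phi>0\}$ have a common point. The statement is true, but the proof is precisely the paper's Lemma \ref{lemm5}: take $g(s,b)=y(b\mid s)$, so that $E_\phi=\langle Y_{Id},Y_{Id}\rangle-\langle Y_{Id},Y_{\phi}\rangle>0$ by strict Cauchy--Schwarz whenever $Y_\phi\neq Y_{Id}$, i.e.\ whenever $\phi$ moves a column. Second, your class (III) (permutations fixing every column of $y^0$) is where the real difficulty sits, and your treatment of it is not a proof: the ``degenerate sub-class'' is dispatched with ``a further adjustment'' and an unproved rearrangement step, you yourself flag the coordination between $\xi$ and $g^1$ as unresolved, and the feasibility of $\tilde y=y^0+\eta\xi$ is not addressed when $y^0(b\mid s)=0$ for some $(s,b)$ (the projection $w_s$ need not point into the simplex). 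The paper eliminates class (III) entirely by first replacing $y^0$ with a nearby \emph{injective} map still satisfying \textbf{C1} (Lemma \ref{lemm41}): pick arbitrary distinct $z_s\in\Delta(B)$, let $\tilde\phi$ maximize $\sum_s u^1(s,z_{\psi(s)})$ over permutations $\psi$ --- so that \textbf{C'1} holds for $\tilde z$ by construction --- and mix $y^0$ with $\tilde z$. After that every non-identity permutation moves a column, and a single Cauchy--Schwarz argument handles all of them at once, with no case analysis.

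The proposed fallback via a semialgebraic curve and Puiseux expansion also does not transfer to this direction of Theorem \ref{theorem3}. In Proposition \ref{prop1} the negated conclusion supplies, for each $\varepsilon$, a non-constant solution $y_\varepsilon$ of a perturbed system, from which a curve can be extracted and a limit taken. Here the negated conclusion is that \emph{no} nearby game has $\widehat{E}(\calM)\neq\emptyset$; it hands you no family of approximate witnesses to put on a curve, and producing ``a genuine element of $\widehat{E}_G(\calM)$'' would in any case prove more than the proposition asserts (which concerns nearby games $G'$, not $G$ itself). So the direct constructive route must be completed, and its missing steps are exactly Lemmas \ref{lemm41} and \ref{lemm5} of the paper.
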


\begin{proof}
The proof combines three independent lemmas. We first show that
there are perturbations of $u^2$ such that the inequality in (i)
holds strictly for the perturbed game. Next, we show that the map
$y$ may be assumed to be one-to-one. Finally, we construct
perturbations of $u^1$ such that the inequalities in (ii) will be
strict.

\begin{lemma}\label{lemm31}
Let $G$ be a game with payoff function $u$, and let $y:S\to
\Delta(B)$ be a non-constant function such that $U^2(\mu_0,y)\geq
v^2$. Then, any neighborhood of $u^2$ contains payoff functions
$\tilde u^2$ such that $\tilde U^2(\mu_0,y)>\tilde v^2$.

\end{lemma}

\begin{proof}
Define $P\in \Delta(S \times B)$ by $P(s,b):=m(s) y(b\mid s)$, for
$s\in S,b\in B$, and let $\ep>0$ be given. We abuse notations and
still denote by $P$ the two marginals of $P$ over $S$ and
$B$. Note that $P(s)=m(s)>0$ for each $s\in S$. Define $\tilde
u^2:S\times B\to \dR$ by $\tilde u^2(s,b)=u^2(s,b)$ if $P(b)=0$,
and
 $$\tilde{u}^2(s,b)=u^2(s,b) + \varepsilon \frac{P(s,b)}{P(s)P(b)}\mbox { if }P(b)>0.$$

We claim that $\tilde U^2(\mu_0,y)>\tilde v^2$. Since $\ep$ is
arbitrary, the result will follow. Note first that, for $b\in B$
such that $P(b)>0$, one has
 \[
  \tilde{U}^2(\mu_0,b) = U^2(\mu_0,b) + \varepsilon \sum_{s\in S} m(s) \frac{P(s,b)}{m(s)P(b)}=U^2(b, \mu_0) + \varepsilon.
    \]
Hence,  $\tilde{v}^2=v^2+ \varepsilon$ (see Eq. (\ref{equ minmax})).
On the other hand, since
$y(b\mid s) = \frac{P(s,b)}{m(s)} = \frac{P(s,b)}{P(s)} = P(b \mid
s)$,
  \begin{eqnarray*}
  \tilde{U}^2(\mu_0,y) & =  & U^2(\mu_0,y) + \varepsilon \sum_{s \in S,b \in B} m(s)  y(b\mid s)  \frac{P(s,b)}{P(s)P(b)}\\
  & = & U^2(\mu_0,y) + \varepsilon \sum_{s\in S} m(s) \sum_{b\in B}  \frac{P(b\mid s)^2}{P(b)}.
    \end{eqnarray*}

Viewed as a function of the probability distribution $q\in \Delta(B)$, the expression $\displaystyle \sum_{b\in B}\frac{(q(b))^2}{P(b)}$
is strictly convex, and admits a unique minimum equal to 1, when $q=P$.
Thus, for fixed state $s \in S$, one has $\sum_{b\in B}  \frac{P(b\mid s)^2}{P(b)}\geq
1$, with a strict inequality whenever the conditional distribution
$P(\cdot\mid s)$ differs from $P$. Since $y$ is non-constant, there
exist one state $s$ such that $P(\cdot\mid s)\neq P$. Therefore,
\[
\tilde{U}^2(\mu_0,y) > U^2(\mu_0,y)+ \varepsilon \geq v^2+\ep=\tilde{v}^2, \]
as desired.
\end{proof}

\bigskip

\begin{lemma}\label{lemm41}
Let $G$ be a game with payoff function $u$, and let $y:S\to \Delta(B)$ be such that $U^1(\mu_0,y)\geq U^1(\mu,y)$ for each $\mu\in \calM$.
Then, any neighborhood of $y$ in $\dR^{S\times B}$ contains a one-to-one function
$\tilde y:S\to \Delta(B)$ such that $U^1(\mu_0,\tilde y)\geq U^1(\mu,\tilde y)$ for each $\mu\in \calM$.
\end{lemma}

\begin{proof}
It suffices to show the existence of a one-to-one map
$\tilde z:S\to\Delta(B)$ such that $U^1(\mu_0,\tilde z)\geq U^1(\mu,\tilde z)$ for each $\mu\in \calM$.
Indeed, the conclusion of the lemma then follows by setting $\tilde y=(1-\ep)y+\ep \tilde z$, for $\ep>0$ small enough.

Let $(z_s)_{s\in S}$ be arbitrary distinct elements of  $\Delta(B)$.
Let $\tilde \phi$ be a permutation over $S$ that maximizes the sum $\displaystyle \sum_{s\in S}u^1(s,z_{\psi(s)})$ over all permutations $\psi$,
and set $\tilde z_s=z_{\tilde \phi(s)}$. By construction, one has
\[\sum_{s\in S}u^1(s,\tilde z_s)\geq \sum_{s\in S}u^1(s,\tilde z_{\phi(s)}),\]
for every permutation $\phi$ over $S$. By Lemma \ref{lemmbasic}, this implies $U^1(\mu_0,\tilde z)\geq U^1(\mu,\tilde z)$ for every $\mu\in \calM$,
as desired.
\end{proof}

\bigskip

\begin{lemma}\label{lemm5}
Let $G$ be a game with payoff function $u$,
and let $y:S\to \Delta(B)$ be a one-to-one map such that $U^1(\mu_0,y)\geq U^1(\mu,y)$ for each $\mu\in \calM$.
Then, any neighborhood of $u^1$ contains payoff functions $\tilde u^1$ such that $\tilde U^1(\mu_0,y)> \tilde U^1(\mu,y)$ for each $\mu\in \calM$,
$\mu\neq \mu_0$.
\end{lemma}
Note that the existence of a stationary strategy $y$ that satisfies the requirements follows from Lemma \ref{lemm41}.

\begin{proof}
Let $G$, $u$ and $y$ be as stated. Given $\ep>0$, we define $\tilde u^1:S\times B\to \dR$ by
$$\tilde{u}^1(s,b)=u^1(s,b) + \varepsilon  y(b\mid s).$$ We will prove that for every $\ep>0$,
one has
$\tilde U^1(\mu_0,y)> \tilde U^1(\mu,y)$ for each $\mu \in \calM \setminus \{ \mu_0\}$.

Given a permutation $\phi$ over $S$, we denote by $Y_\phi\in \dR^{S\times B}$ the vector whose $(s,b)$-component is equal to
$y(b\mid \phi(s))$.
Then,
\begin{eqnarray}\label{eq5}
\sum_{s\in S}\tilde u^1(s,y(\cdot\mid\phi(s)))
&=& \sum_{s\in S,b \in B}y(b \mid \phi(s)) \tilde u(s,b)\\
&=&\sum_{s\in S,b \in B}y(b \mid \phi(s)) u(s,b) + \ep\sum_{s\in S,b \in B}y(b \mid \phi(s))y(b \mid s)\\
&=&\sum_{s\in S}u^1(s,y(\cdot\mid\phi(s)) +\ep \langle Y_{Id},Y_{\phi}\rangle,
\end{eqnarray}
where $\langle Y_{Id},Y_{\phi}\rangle = \sum_{s\in S,b \in B}y(b \mid \phi(s))y(b \mid s)$ is the standard scalar product in $\dR^{S\times B}$.

Since $y$ is one-to-one, the vectors $Y_\phi$  and  $Y_{Id}$ are not co-linear as soon as $\phi \neq Id$. By Cauchy-Schwarz inequality, it follows that
\begin{equation}\label{eq4}\langle Y_{Id},Y_{\phi}\rangle
< \|Y_{Id}\|_2 \|Y_{\phi}\|_2 = \|Y_{Id}\|^2=\langle Y_{Id},Y_{Id}\rangle\end{equation}
where the first equality holds since the components of $Y_{\phi}$ are obtained by permuting the components of $Y_{Id}$.

On the other hand, observe that by Lemma \ref{lemmbasic}, one has
\begin{equation}
\label{equ25}
\sum_{s\in S}u^1(s,y(\cdot\mid\phi(s))\leq \sum_{s\in S}u^1(s,y(\cdot\mid s)).
\end{equation}
Plugging (\ref{equ25}) into (\ref{eq5}), one obtains
\[\sum_{s\in S}\tilde u^1(s,y(\cdot\mid\phi(s))<\sum_{s\in S}u^1(s,y(\cdot\mid s) + \langle Y_{Id},Y_{Id}\rangle = \sum_{s\in S}\tilde u^1(s,y(\cdot\mid s)).\]
By Lemma \ref{lemmbasic} this yields $\tilde U^1(\mu_0,y)>\tilde U^1(\mu,y)$, for every $\mu\neq \mu_0$ in $\calM$, as desired.
\end{proof}

\bigskip

The proof of Proposition \ref{prop2} follows from Lemmas \ref{lemm31}, \ref{lemm41} and \ref{lemm5}.
\end{proof}

\end{appendix}


\begin{thebibliography}{99}
\bibitem{AB}
{Athey S. and Bagwell K. (2008)
Collusion with Persistent Cost Shocks.
{\em Econometrica}, \textbf{76}, 493-540.}

\bibitem{AH}
{Aumann R.J. and Hart S. (2003)
Long Cheap Talk.
{\em Econometrica}, \textbf{71}, 1619-1660.}

\bibitem{AM}
{Aumann R.J. and Maschler M.B. (1995)
Repeated Games with Incomplete Information. The MIT Press.}

\bibitem{B}
{Battaglini, M. (2005).
Long-term contracting with Markovian consumers.
{\em American Economic Review}, \textbf{95}, 637--658.}

\bibitem{BCR}
{Bochnak J., Coste M. and Roy M.F. (1998)
Real Algebraic Geometry. Springer.}

\bibitem{CS}
{Crawford V.P. and Sobel J. (1982)
Strategic Information Transmission.
{\em Econometrica}, \textbf{50}, 1431-1451.}

\bibitem{ET}
{Escobar, J. F. and J. Toikka (2010)
A Folk Theorem with Markovian Private Information.
\textit{mimeo}.}

\bibitem{farrell-rabin}
{Farrell J. and Rabin M. (1996) Cheap talk. \emph{Journal of
Economic Perspectives}, \textbf{10}, 103-118.}

\bibitem{FK}
{Forges F. and Koessler F. (2008)
Long Persuasion Games.
{\em Journal of Economic Theory},  \textbf{143}, 1-35.}

\bibitem{FLM}
{Fudenberg D., Levine K. and Maskin E. (1994)
The Folk Theorem with Imperfect Public Information.
\emph{Econometrica}, \textbf{62}, 997-1040.}

\bibitem{GSTW}
{Golosov M., Skreta V., Tsyvinski A. and Wilson A. (2009)
Dynamic Strategic Information Transmission.
Preprint.}

\bibitem{GS}
{Green J.R. and Stokey N.L. (2007)
A Two-Person Game of Information Transmissionstar.
{\em Journal of Economic Theory}, \textbf{135}, 90-104.}

\bibitem{HRSV}
{H\"orner J., Rosenberg D., Solan E. and Vieille N. (2010)
On a Markov Game with One-Sided Incomplete Information.
{\em Operations Research}, forthcoming.}

\bibitem{HSTV}
{H\"orner J., Sugaya T., Takahashi S. and Vieille N. (2009)
Recursive Methods in Discounted Stochastic Games: An Algorithm for $\delta \to 1$ and a Folk Theorem.
{\em Econometrica}, forthcoming.}

\bibitem{JS}
{Jackson, M. O. and H.F. Sonnenschein (2007)
Overcoming Incentive Constraints by Linking Decisions. {\em Econometrica},  \textbf{75}, 241--258.}

\bibitem{KM}
{Krishna V. and Morgan J. (2001)
A Model of Expertise.
{\em Quarterly Journal of Economics}, \textbf{116}, 747-775.}

\bibitem{KM2}
{Krishna V. and Morgan J. (2008)
Contracting for Information under Imperfect Commitment.
{\em RAND   Journal of Economics}, \textbf{39}, 905-925.}

\bibitem{MS}
{Mailath G.J. Samuelson�L. (2006) Repeated GTames and Reputations: Long-Run Relationships.
Oxford University Press.}

\bibitem{Phelan}
{Phelan C. (2006)
Public Trust and Goverment Betrayal.
{\em Journal of Economic Theory}, \textbf{130}, 27-43.}

\bibitem{R06}
{Renault, J. (2006)
The Value of Markov Chain  Games with Lack of Information on One Side.
{\em Mathematics of Operations Research},    \textbf{31}, 490-512.}

\bibitem{Sobel}
{Sobel J. (2009)
Signaling Games.
{\em Encyclopedia of Complexity and Systems Science, Springer}, \textbf{19}, 8125-8139.}

\bibitem{Wiseman}
{Wiseman T. (2008)
Reputation and Impermanent Types.
{\em Games and Economic Behavior}, \textbf{62}, 190-210.}
\end{thebibliography}
\end{document}